\documentclass[review,hidelinks,onefignum,onetabnum]{siamart220329}
\newtheorem{example}{Example}[section]

\newenvironment{breakablealgorithm}
  {
   \begin{center}
     \refstepcounter{algorithm}
     \hrule height.8pt depth0pt \kern2pt
     \renewcommand{\caption}[2][\relax]{
       {\raggedright\textbf{Algorithm~\thealgorithm} ##2\par}%
       \ifx\relax##1\relax 
         \addcontentsline{loa}{algorithm}{\protect\numberline{\thealgorithm}##2}%
       \else 
         \addcontentsline{loa}{algorithm}{\protect\numberline{\thealgorithm}##1}%
       \fi
       \kern2pt\hrule\kern2pt
     }
  }
           {
   \end{center}
   \kern2pt\hrule\kern2pt
  }


\usepackage{lipsum}
\usepackage{amsfonts}
\usepackage{graphicx}
\usepackage{epstopdf}
\usepackage{algorithmic}
\ifpdf
  \DeclareGraphicsExtensions{.eps,.pdf,.png,.jpg}
\else
  \DeclareGraphicsExtensions{.eps}
\fi


\newsiamremark{remark}{Remark}
\newsiamremark{hypothesis}{Hypothesis}
\crefname{hypothesis}{Hypothesis}{Hypotheses}
\newsiamthm{claim}{Claim}

\headers{Two-dimensional greedy randomized extended Kaczmarz methods}{X.F. Zhang, M.L. Xiao, T. Li}
\title{Two-dimensional greedy randomized extended Kaczmarz methods \thanks{Corresponding author: T. Li (tli@hainanu.edu.cn).
\funding{This work was funded by the National Natural Science Foundation of China [grant numbers 12401493 and 12401019], and the Hainan Provincial Natural Science Foundation of China [grant number 125RC627].}}}
\author{Xin-Fang Zhang\thanks{School of Mathematics and Statistics, Hainan University, Haikou 570228, P. R. China
  (\email{995272@hainanu.edu.cn}).}
  \and Meng-Long Xiao\thanks{School of Mathematics and Statistics, Hainan University, Haikou 570228, P. R. China
  (\email{xml@hainanu.edu.cn}).}
  \and Tao Li\thanks{School of Mathematics and Statistics, Hainan University, Haikou 570228, P. R. China
  (\email{tli@hainanu.edu.cn}).}
}

\usepackage{amsopn}


\ifpdf
\hypersetup{
  pdftitle={Two-dimensional greedy randomized extended Kaczmarz methods},
  pdfauthor={Tao Li, Meng-Long Xiao, Xin-Fang Zhang}
}
\fi

\externaldocument[][nocite]{ex_supplement}

\begin{document}

\maketitle

\begin{abstract}
The randomized extended Kaczmarz method, proposed by Zouzias and Freris (SIAM J. Matrix Anal. Appl. 34: 773-793, 2013), is appealing for solving least-squares problems. However, its randomly selecting rows and columns of $A$ with probability proportional to their squared norm is unattractive compared to the greedy strategy.
In this paper, we first consider a novel two-dimensional greedy randomized extended Kaczmarz method
for solving large linear least-squares problems. The proposed method randomly selects two rows and two columns of $A$ by grasping two larger entries in the magnitude of the corresponding residual vector per iteration. To improve its convergence, we then propose a two-dimensional semi-randomized extended Kaczmarz method and its modified version with simple random sampling, which is particularly favorable for big data problems. The convergence analysis of which is also established. Numerical results on some practical applications illustrate the superiority of the proposed methods compared with state-of-the-art randomized extended Kaczmarz methods, especially in terms of computing time.
\end{abstract}

\begin{keywords}
Randomized extended Kaczmarz method; Two-dimensional; Greedy randomized Kaczmarz method; Semi-randomized; Simple random sampling
\end{keywords}

\begin{MSCcodes}
65F10; 65F20; 94A08
\end{MSCcodes}
\section{Introduction}\label{section-1}
Consider the solution of the least-squares problem
\begin{equation}\label{1-1}
\min\limits_{x\in\mathbb{C}^n}\|b-Ax\|_2,  
\end{equation}
where $A\in \mathbb{C}^{m\times n} $ and $b\in \mathbb{C}^{m} $ are given, and $x\in \mathbb{C}^{n} $ is required to be determined. Such a least-squares problem has widespread applications in the field of scientific computing and engineering, such as image reconstruction \cite{JA,GT}, geophysics \cite{Menke,KC} and big data analysis \cite{Björck}, etc. Note that iterative methods for solving linear systems have been well developed \cite{CC,DC,Kaczmarz,D}. Among them, the randomized iterative methods \cite{Strohmer,Needell,Elble,Needell2,KDu,Ma,Bai1,Bai2,Miao,Zou,Needell3,WWT,Ke} have attracted much attention, since they do not even need to know the whole system, but only a small random part. The aim of this paper is to devise high-performance randomized iterative methods for solving the least-squares problem \eqref{1-1}.

Existing randomized iterative methods for Eq.\eqref{1-1} are randomized extended
Kaczmarz-type methods, which is a specific combination of the randomized orthogonal projection methods together with the randomized Kaczmarz-type methods. For example, Zouzias and Freris \cite{Zou} first proposed a randomized extended Kaczmarz (REK) method, which converges in the mean square to the least-squares solution $x_{LS}=A^\dagger b$ of \eqref{1-1}. Denote $b^{(i)}$, $A^{(i)}$ and $A_{(j)}$ as the $i$-th entry of a vector $b$, the $i$-th row, and $j$-th column of a matrix $A$, respectively. Then the REK solver iteratively updates two vectors $z_k$ and $x_k$ ($A^*z=0$ for $z_k$ and $Ax=b-z_k$ for $x_k$) by
$$
z_{k+1} =(I_m-\frac{A_{(j_k)}A_{(j_k)}^*}{\|A_{(j_k)}\|_2^2})z_{k},
$$
$$
x_{k+1} =x_k+\frac{{b}^{(i_{k})}-z_k^{(i_k)}-A^{(i_{k})}x_{k}} {\| A^{(i_{k})}\|_{2} ^{2} }(A^{(i_{k})})^{*},
$$
in which the symbol $(\cdot)^*$ denotes the conjugate transpose of the corresponding vector. To further improve its convergence, a block version of REK has 
 been proposed in \cite{Needell3}. However, the method often requires good row and column pavings, which consumes additional operations, especially when the row or column norms of $A$ vary significantly. After that, Wu \cite{WWT} developed a two-subspace randomized extended Kaczmarz (TREK) method, for solving the least-squares problem \eqref{1-1}, which does not require any row or column paving. The TREK randomly selects two working rows and columns of $A$ with probability proportional to its relevance, which converges faster than REK. Moreover, Du et al. devised a novel randomized extended average block Kaczmarz method \cite{KDu} for finding the least-squares solution of Eq.\eqref{1-1} without involving the Moore-Penrose inverse. Besides, Li and Wu \cite{LiS} developed a semi‐randomized and augmented Kaczmarz method with simple random sampling for inconsistent linear systems. This method requires only a small portion of the dataset, making it more competitive for big data problems. Recently, Ke et al. \cite{Ke} derived a greedy block extended Kaczmarz (GBEK) method with the average block projection technique. For other Kaczmarz-type methods, see \cite{Necoara,Schöpfer,Alderman,LW}.

From the above discussions, one can see that each iteration of them is a typical one-dimensional projection method. Specifically, these methods, at $k$-th iteration, extract approximate solutions $x_{k+1}$ and $z_{k+1}$ from one-dimensional affine subspaces $x_{k}+\mathcal K_{0}$ and $z_{k}+\tilde{\mathcal K}_{0}$ by imposing the Petrov-Galerkin conditions
\begin{equation}\label{1-4}
{b}-{A}x_{k+1} \perp \mathcal L_{0} \quad \mathrm{and} \quad {A}^*z_{k+1} \perp \tilde{\mathcal L}_{0},
\end{equation}
respectively, where $\mathcal K_{0}= \mathrm{span}\left \{(A^{(i_k)})^* \right \}$, $\tilde{\mathcal K}_{0}= \mathrm{span}\left \{A_{(j_k)} \right \}$, and $\mathcal L_{0}=\mathrm{span}\left \{ e _{i_k}  \right \}$ and $\tilde{\mathcal L}_{0}=\mathrm{span}\left \{ e _{j_k}  \right \}$. Note that the $x_{k+1}$ and $z_{k+1}$ generated from a two-dimensional or higher-dimensional subspace, which consists of multiple active rows or columns of $A$, may converge faster than the one-dimensional method. Nevertheless, the TREK method, which involves successive projections onto two different hyperplanes, is far more complex. Therefore, from a purely algebraic point of view,
 an alternative to TREK is concisely proposed, which performs more straightforwardly. It should be pointed out that TREK is costly for scanning all rows and columns of the data matrix in advance to calculate the probability, which is unfavorable for big data problems. To deal with this problem, motivated by Chebyshev's (weak) law of large numbers, we further apply the simple sampling strategy to the two-subspace randomized extended Kaczmarz method, and then propose a two-dimensional randomized extended Kaczmarz method with simple random sampling (TREKS).

As well-known that active working rows and columns play a key role in extended Kaczmarz-type methods, while the greedy selection is a powerful strategy. Thus, we propose a greedy randomized extended Kaczmarz (GREK) method, which incorporates a greedy randomized Kaczmarz iteration with a greedy randomized orthogonal projection iteration that approximates the projection of $b$ onto the orthogonal complement of the range space of $A$. To neither compute the probabilities nor construct the index sets for working rows and columns, we then devise a semi-randomized extended Kaczmarz (SREK) method with grasping the current maximal homogeneous residuals. From the two-dimensional Kaczmarz iteration, we further develop the two-dimensional greedy randomized extended Kaczmarz (TGREK) method and its semi-randomized variant. However, for big data problems, the proposed semi-randomized extended Kaczmarz methods may still be unavailability, since the residual in each iteration has to be computed. Hence, a novel two-dimensional semi-randomized extended Kaczmarz method with simple random sampling (TSREKS), based on Chebyshev's (weak) law of large numbers, is proposed.

The organization of this paper is as follows.  In Section \ref{section-3}, we propose several one-dimensional and two-dimensional extended Kaczmarz-type methods, for solving the least-squares problem \eqref{1-1}. The convergence of which is also established. In Section \ref{section-4}, we present some numerical examples to demonstrate the feasibility and validity of the proposed methods compared with some existing methods. Finally, in Section \ref{section-5}, we conclude this paper by giving some remarks.

In this paper, we denote the $i$-th row and
$j$-th column of a matrix $A\in \mathbb{C}^{m\times n}$ by $A^{(i)}$ and $A_{(j)}$, respectively. The Frobenius norm and 2-norm \cite{Saad1} of $A$ are defined as follows:
\begin{equation}\label{2-1}
\| A\|_F:= {(\sum\limits_{j = 1}^n {\sum\limits_{i = 1}^m {{{\left| {{A_{ij}}} \right|}^2}} } )^{\frac{1}{2}}}=\sqrt{\sum_{i=1}^{t}\sigma_i^2},\quad 
\|A \|_2:= \sigma_{s}, 
\end{equation} 
 where $\sigma_{i}$ denotes the nonzero singular values of $A$, and $\sigma_{s}$ is the largest nonzero singular value. Denote the conjugate transpose of $A$ by $A^*$ and the Moore-Penrose inverse by $A^{\dagger}$. Denote  the column space of $A$ by $\mathcal{R}(A)$ and its orthogonal complement subspace by  $\mathcal{R}(A)^{\bot}$. Let $b_{\mathcal{R}(A)}$ and $b_{\mathcal{R}(A)^{\bot}}$ be the orthogonal projection of $b$ onto $\mathcal{R}(A)$ and ${\mathcal{R}(A)^{\bot}}$, respectively. For the matrix $A^*A$, we denote its nonzero minimal eigenvalue by $\lambda_{min}(A^*A)$, and the maximal eigenvalue by $\lambda_{max}(A^*A)$. Denote by $\mathbb{E} _{k}$, the expected value conditional of the first $k$ iterations, i.e.,
$\mathbb{E} _{k} [\cdot ]=\mathbb{E}[\cdot \mid i_{0_1},i_{0_2},j_{0_1},i_{0_2} ,\dots,i_{(k-1)_1},i_{(k-1)_2},j_{(k-1)_1},j_{(k-1)_2}  ],$ where $i_{l_1},i_{l_2},j_{l_1},j_{l_2} \:(l=0,\cdots,
k-1)$ are the rows and columns chosen at the $l$-th iterate, and from the law of iterated expectations, we obtain $\mathbb{E}[\mathbb{E} _{k} [\cdot ]]=\mathbb{E}[\cdot]$. If $k$ is a real number, then ${\left \lfloor k \right \rfloor }$ represents the largest integer that is smaller than or equal to $k$. Define
\begin{equation}\label{2-2}  
\delta=\min\limits_{i_{k_1}\neq i_{k_2}}\frac{|A^{(i_{k_1})}(A^{(i_{k_2})})^*|}{\|A^{(i_{k_1})}\|_2\|A^{(i_{k_2})}\|_2} \quad\mathrm{and}\quad \Delta=\max\limits_{i_{k_1}\neq i_{k_2}}\frac{|A^{(i_{k_1})}(A^{(i_{k_2})})^*|}{\|A^{(i_{k_1})}\|_2\|A^{(i_{k_2})}\|_2}, i_{k_1},i_{k_2}\in[m]
\end{equation}.
\begin{equation}\label{2-3}
\tilde{\delta}=\min\limits_{i_{k_1}\neq i_{k_2}}\frac{|(A_{(i_{k_1})})^*A_{(i_{k_2})}|}{\|A_{(i_{k_1})}\|_2\|A_{(i_{k_2})}\|_2} \quad\mathrm{and}\quad \tilde{\Delta}=\max\limits_{i_{k_1}\neq i_{k_2}}\frac{|(A_{(i_{k_1})})^*A_{(i_{k_2})}|}{\|A_{(i_{k_1})}\|_2\|A_{(i_{k_2})}\|_2}, i_{k_1},i_{k_2}\in[n].
\end{equation}
Obviously we have $0\leq\delta\leq\Delta<1$ and $0\leq\tilde{\delta}\leq\tilde{\Delta}<1$.

\section{Two-dimensional randomized extended Kaczmarz-type methods}\label{section-3}
In this section, we devise several two-dimensional randomized extended Kaczmarz methods with different strategies, including greedy, semi-randomized, and simple random sampling strategies, for solving the least-squares problem \eqref{1-1}. 
In what follows, we first propose a fast orthogonal projection method built upon the greedy criterion, which approximately computes the orthogonal projection of $b$ onto the column space of $A$, and then establish the greedy randomized extended Kaczmarz (GREK) method.





\subsection{Greedy randomized extended Kaczmarz method}\label{subsection-3-1}
For randomized extended Kaczmarz methods, a crucial factor of convergence is how to project 
orthogonally $b$ onto the column space of $A$.
Observe from \cite{Bai} that the greedy randomized Kaczmarz (GRK) solver, with the greedy selection strategy, often outperforms the randomized Kaczmarz (RK) \cite{Strohmer} solver. Therefore, a greedy randomized orthogonal projection is as follows.
\begin{algorithm}
	\caption{Greedy randomized orthogonal projection}
	\label{algo-3-1}
	\begin{algorithmic}[1]
        \REQUIRE $A, b$\\
        \ENSURE $z_{k+1}$\\
        \STATE
        Initialize $z_0= b$
        \STATE
       For $k=0, 1, 2,\cdots $, until convergence, do:\\
\STATE   Compute
        \begin{equation}\label{3-1}
         \bar{\epsilon} _{k} = \frac{1}{2}\left(\frac{1}{{\parallel  A^*z_{k}{\parallel _{2}^2}}}\mathop {\max }\limits_{\substack{i_k \in [n]}} \left\{ \frac{|A_{(i_{k})}^*z_k|^2}{\|A_{(i_{k})}\|_2^2}\right\} + \frac{1}{{\parallel A\parallel _{F}^2}}\right),
        \end{equation}
         \STATE
         Determine the index set of positive integers\\
         \begin{equation}\label{3-2}
         \bar{\mathcal{U}}_k = \left\{ {i_k}\Big| {|A^*_{(i_{k})}z_{k}|^2 \ge \bar{\epsilon}_k} \parallel 
         A^*{z_{k}}{\parallel _{2}^2}\| A_{(i_{k})}\|_{2}^2 \right\}.\end{equation}\\
         \STATE
Compute the $i$-th element of the vector 
     $\tilde{\mathbf{r}} _{k_2}$ by  \\
     $$\tilde{\mathbf{r} } _{k_2} ^{(i)}=\left\{ \begin{array}{l}
    {A_{(i_{k})}^*z_k},\qquad  \mathrm{if} \quad i \in \bar{\mathcal{U}}_{k}
     \\
    0.\qquad \qquad \quad \enspace \mathrm{otherwise}
    \end{array}
    \right.$$
        \STATE
        Select $i_k \in {\bar{\mathcal{U}}_k}$ with probability $\mathbb{P}$(column =$i_k$)=$ \mid {{{\tilde {\mathbf{r} }}_{k_2}}^{(i_k)}}\mid^2  /\left \| {{{\tilde {\mathbf{r} }}}_{k_2}} \right \|_{2}^2 $.\\
    \STATE
 Compute $z_{k+1} =(I_m-\frac{A_{(i_k)}A_{(i_k)}^*}{\|A_{(i_k)}\|_2^2})z_{k}.$

	\end{algorithmic}  
\end{algorithm}

From Algorithm \ref{algo-3-1}, we can see that the index set $\bar{\mathcal{U}}_k$ is nonempty for all iteration index $k$, since
$$
\max\limits_{\substack{1\leq i_k \leq n}}\frac{|A_{(i_{k})}^*z_k|^2}{\|A_{(i_{k})}\|_2^2}\geq \sum_{i_k=1}^{n}\frac{\|A_{(i_{k})}\|_2^2}{\|A\|_{F}^2}\frac{|A_{(i_{k})}^*z_k|^2}{\|A_{(i_{k})}\|_2^2}=\frac{\|A^*z_k\|_2^2}{\|A\|_{F}^2}
$$
and
$$
\frac{|A_{(i)}^*z_k|^2}{\|A_{(i)}\|_2^2}=\max\limits_{\substack{1\leq i_k \leq n}}\frac{|A_{(i_{k})}^*z_k|^2}{\|A_{(i_{k})}\|_2^2}\geq \frac{1}{2}\max\limits_{\substack{1\leq i_k \leq n}}\frac{|A_{(i_{k})}^*z_k|^2}{\|A_{(i_{k})}\|_2^2}+\frac{1}{2}\frac{\|A^*z_k\|_2^2}{\|A\|_{F}^2}
$$
hold, i.e., the maximum is always greater than the average. Next, we present an important theorem for guaranteeing the convergence of Algorithm \ref{algo-3-1}.
\begin{theorem}\label{The-1}
{\rm Let $b$ be the initial guess of Algorithm \ref{algo-3-1}. Then the iterative sequence $\{z_{k}\}$ generated by Algorithm \ref{algo-3-1} converges to $b_{\mathcal{R}(A)^{\bot}}$ in expectation. Moreover, the corresponding error norm in expectation yields }
\end{theorem}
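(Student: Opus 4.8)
The plan is to read Algorithm~\ref{algo-3-1} as a greedy randomized Kaczmarz process for the homogeneous system $A^{*}z=0$ and to adapt the convergence argument of the GRK method in~\cite{Bai}. Put $z_{\star}=b_{\mathcal{R}(A)^{\bot}}$; since $\mathcal{R}(A)^{\bot}=\mathcal{N}(A^{*})$ we have $A^{*}z_{\star}=0$, hence $A_{(i_{k})}^{*}z_{\star}=0$ for every column index $i_{k}$. First I would subtract $z_{\star}$ from the update in line~7 and use this identity to obtain the homogeneous error recursion
\[
z_{k+1}-z_{\star}=\Bigl(I_{m}-\frac{A_{(i_{k})}A_{(i_{k})}^{*}}{\|A_{(i_{k})}\|_{2}^{2}}\Bigr)(z_{k}-z_{\star}).
\]
Because $z_{0}-z_{\star}=b-b_{\mathcal{R}(A)^{\bot}}=b_{\mathcal{R}(A)}\in\mathcal{R}(A)$ and every step subtracts a scalar multiple of the column $A_{(i_{k})}\in\mathcal{R}(A)$, an easy induction gives $z_{k}-z_{\star}\in\mathcal{R}(A)$ for all $k$; this is precisely where the hypothesis $z_{0}=b$ enters.

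Next, since the matrix on the right is an orthogonal projector and $A_{(i_{k})}^{*}z_{\star}=0$, the Pythagorean identity yields the exact one-step decrease
\[
\|z_{k+1}-z_{\star}\|_{2}^{2}=\|z_{k}-z_{\star}\|_{2}^{2}-\frac{|A_{(i_{k})}^{*}z_{k}|^{2}}{\|A_{(i_{k})}\|_{2}^{2}}.
\]
Taking the conditional expectation $\mathbb{E}_{k}$ over the random index $i_{k}\in\bar{\mathcal{U}}_{k}$ selected with the probability of line~6, using $\sum_{i\in\bar{\mathcal{U}}_{k}}|A_{(i)}^{*}z_{k}|^{2}/\|\tilde{\mathbf{r}}_{k_{2}}\|_{2}^{2}=1$, and invoking the defining inequality of $\bar{\mathcal{U}}_{k}$ in~\eqref{3-2}, namely $|A_{(i)}^{*}z_{k}|^{2}/\|A_{(i)}\|_{2}^{2}\ge\bar{\epsilon}_{k}\|A^{*}z_{k}\|_{2}^{2}$ for $i\in\bar{\mathcal{U}}_{k}$, I obtain
\[
\mathbb{E}_{k}\Bigl[\frac{|A_{(i_{k})}^{*}z_{k}|^{2}}{\|A_{(i_{k})}\|_{2}^{2}}\Bigr]\ge\bar{\epsilon}_{k}\,\|A^{*}z_{k}\|_{2}^{2}.
\]

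It then remains to bound $\bar{\epsilon}_{k}$ from below. For $k=0$ the ``maximum exceeds the average'' inequality displayed just before the theorem gives $\bar{\epsilon}_{0}\ge 1/\|A\|_{F}^{2}$. For $k\ge1$ one uses that $z_{k}$ was produced by projecting onto $\{z:A_{(i_{k-1})}^{*}z=0\}$, so the $i_{k-1}$-th entry of $A^{*}z_{k}$ vanishes; bounding the remaining entries of $A^{*}z_{k}$ by the maximum gives $\|A^{*}z_{k}\|_{2}^{2}\le(\|A\|_{F}^{2}-\min_{j}\|A_{(j)}\|_{2}^{2})\max_{i}\{|A_{(i)}^{*}z_{k}|^{2}/\|A_{(i)}\|_{2}^{2}\}$, hence the sharper bound $\bar{\epsilon}_{k}\ge\frac{1}{2}\bigl(1/(\|A\|_{F}^{2}-\min_{j}\|A_{(j)}\|_{2}^{2})+1/\|A\|_{F}^{2}\bigr)$. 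Finally, since $z_{k}-z_{\star}\in\mathcal{R}(A)$ and $A^{*}z_{k}=A^{*}(z_{k}-z_{\star})$, we have $\|A^{*}z_{k}\|_{2}^{2}\ge\lambda_{min}(A^{*}A)\,\|z_{k}-z_{\star}\|_{2}^{2}$. Chaining the last three displays gives a one-step contraction (with a slightly looser factor at $k=0$), and taking full expectations and unrolling the recursion produces the stated error bound with $\|b_{\mathcal{R}(A)}\|_{2}^{2}$ as the leading constant.

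The algebra of the Pythagorean identity and of the conditional expectation is routine. The main obstacle is the sharp lower bound on $\bar{\epsilon}_{k}$ for $k\ge1$ together with the bookkeeping that keeps $z_{k}-z_{\star}$ inside $\mathcal{R}(A)$, so that $\lambda_{min}(A^{*}A)>0$ — rather than $0$ — may legitimately be inserted; both rely on the specific initialization $z_{0}=b$ and on the non-emptiness of $\bar{\mathcal{U}}_{k}$, already verified in the text.
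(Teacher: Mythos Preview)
Your proposal is correct and follows essentially the same argument as the paper: both derive the error recursion via the orthogonal projector $I_m-A_{(i_k)}A_{(i_k)}^{*}/\|A_{(i_k)}\|_2^2$, use the Pythagorean identity, bound the conditional expectation through the defining inequality of $\bar{\mathcal{U}}_k$ to pull out $\bar{\epsilon}_k\|A^{*}z_k\|_2^2$, lower-bound $\bar{\epsilon}_k$ via the vanishing of the $i_{k-1}$-th residual entry, and finish with $\|A^{*}z_k\|_2^2\ge\lambda_{min}(A^{*}A)\|z_k-z_\star\|_2^2$. You are in fact slightly more careful than the paper in making explicit the invariant $z_k-z_\star\in\mathcal{R}(A)$, which is what justifies inserting $\lambda_{min}(A^{*}A)$ rather than the trivial lower bound $0$.
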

\begin{equation}\label{3-3}
\mathbb{E}_{k}\|z_k-b_{\mathcal{R}(A)^{\bot}}\|_2^2\leq \left[1-\frac{1}{2}\left(\frac{\|A\|_F^2}{\tilde{\tau}_{max}}+1\right)\frac{\lambda_{min}(A^*A)}{\|A\|_F^2}\right]^k\|b_{\mathcal{R}(A)}\|_2^2,   
\end{equation}
where $\tilde{\tau}_{max}=\|A\|_F^2-\min\limits_{i\in[n]}\|A_{(i)}\|_2^2$.
\begin{proof}
Let $P_i=I_m-\frac{A_{(i)}A_{(i)}^*}{\|A_{(i)}\|_2^2}$ for every $i\in[n]$. It is easy to verify that $P_i^2=P_i$ and $P_i^*=P_i$. This shows that $P$ is a projection matrix, and if $P_i$ is a projector, then so is $(I-P_i)$. Define $e_k:=z_k-b_{\mathcal{R}(A)^{\bot}}$ for every $k \geq 0$.
When $k=0$, it follows that
\begin{equation}\label{3-4}
\bar{\epsilon}_0\|A\|_{F}^2\ge 1 .
\end{equation}
From Eq.\eqref{3-1}, when $k=1,2,\cdots,$ it follows that
$$
\begin{aligned}
\bar{\epsilon}_k\|A\|_F^2&= \frac{1}{2}\frac{\max \limits_{\substack{i_k \in [n]}} \left\{ \frac{|A_{(i_{k})}^*z_k|^2}{\|A_{(i_{k})}\|_2^2}\right\}}{\sum\limits_{i_k=1}^n\frac{\|A_{(i_{k})}\|_2^2}{\|A\|_F^2}\frac{|A_{(i_{k})}^*z_k|^2}{\|A_{(i_{k})}\|_2^2}} +\frac{1}{2}\\
&=\frac{1}{2}\frac{\max \limits_{\substack{i_k \in [n]}} \left\{ \frac{|A_{(i_{k})}^*z_k|^2}{\|A_{(i_{k})}\|_2^2}\right\}}{\sum\limits_{\substack{i_k=1\\i_k\neq i_{k-1}}}^n\frac{\|A_{(i_{k})}\|_2^2}{\|A\|_F^2}\frac{|A_{(i_{k})}^*z_k|^2}{\|A_{(i_{k})}\|_2^2}} +\frac{1}{2}\\
&\geq\frac{1}{2}\left(\frac{\|A\|_F^2}{\sum\limits_{\substack{i_k=1\\i_k\neq i_{k-1}}}^n\|A_{(i_{k})}\|_2^2}+1\right)\\
&\geq\frac{1}{2}\left(\frac{\|A\|_F^2}{\tilde{\tau}_{max}}+1\right).
\end{aligned}
$$
Moreover, from Algorithm \ref{algo-3-1} and $P_{i_k}b_{\mathcal{R}(A)^{\bot}}=b_{\mathcal{R}(A)^{\bot}}$, it follows that
\begin{equation}\label{3-5}
\begin{aligned}
e_k&=z_k-b_{\mathcal{R}(A)^{\bot}}\\
&=P_{i_k}z_{k-1}-b_{\mathcal{R}(A)^{\bot}}\\
&=P_{i_k}z_{k-1}-P_{i_k}b_{\mathcal{R}(A)^{\bot}}\\
&=P_{i_k}(z_{k-1}-b_{\mathcal{R}(A)^{\bot}})\\
&=P_{i_k}e_{k-1}.
\end{aligned}
\end{equation}
According to the above results, it can be stated that
$$
\begin{aligned}
\mathbb{E}_{k}\|e_k\|_2^2&=\mathbb{E}_{k}\|P_{i_k}e_{k-1}\|_2^2\\
&=\mathbb{E}_{k}\left \langle P_{i_k}e_{k-1},P_{i_k}e_{k-1}  \right \rangle \\
&=\mathbb{E}_{k}\left \langle e_{k-1},P_{i_k}e_{k-1}  \right \rangle\\
&=\|e_{k-1}\|_2^2-\mathbb{E}_{k}\left \langle e_{k-1},\frac{A_{(i_k)}A_{(i_k)}^*}{\|A_{(i_k)}\|_2^2}e_{k-1}  \right \rangle\\
&=\|e_{k-1}\|_2^2-\sum\limits_{{i_k} \in {\bar{\mathcal{U}}_{k}}}\frac{|A_{(i_{k})}^*z_k|^2}{\sum\limits_{{i} \in {\bar{\mathcal{U}}_{k}}}|A_{(i)}^*z_k|^2} \frac{|A_{i_k}^*e_{k-1}|^2}{\|A_{i_k}\|_2^2} \\
&\leq \|e_{k-1}\|_2^2-\bar{\epsilon}_k\|
         A^*{z_{k}}\|_{2}^2\\
&\leq \left[1-\frac{1}{2}\left(\frac{\|A\|_F^2}{\tilde{\tau}_{max}}+1\right)\frac{\lambda_{min}(A^*A)}{\|A\|_F^2}\right]\|e_{k-1}\|_2^2.
\end{aligned}
$$
The theorem follows immediately by induction.
\end{proof}

Together with Algorithm \ref{algo-3-1} gives the following greedy randomized extended Kaczmarz method.

\begin{algorithm}
	\caption{Greedy randomized extended Kaczmarz method}
	\label{algo-3-2}
	\begin{algorithmic}[1]
        \REQUIRE $A, b,z_0= b$\\
        \ENSURE $x_{k+1}$, $z_{k+1}$\\
        \STATE
       For $k=0, 1, 2,\cdots $, until convergence, do:\\
       \STATE        Compute\\
        \begin{equation}\label{3-6}
         \epsilon  _{k} = \frac{1}{2}\left(\frac{1}{{\parallel b - Ax_{k}{\parallel _{2}^2}}}\mathop {\max }\limits_{\substack{i_k \in [m]}} \left\{ \frac{|{b}^{(i_{k})}-z_{k}^{(i_{k})}-A^{(i_{k})}x_{k}|^2} {\| A^{(i_{k})}\|_{2}^2 }\right\} + \frac{1}{{\parallel A\parallel _{F}^2}}\right),
        \end{equation}
        \begin{equation}\label{3-7}
         \bar{\epsilon} _{k} = \frac{1}{2}\left(\frac{1}{{\parallel  A^*z_{k}{\parallel _{2}^2}}}\mathop {\max }\limits_{\substack{j_k \in [n]}} \left\{ \frac{|A_{(j_{k})}^*z_k|^2}{\|A_{(j_{k})}\|_2^2}\right\} + \frac{1}{{\parallel A\parallel _{F}^2}}\right),
        \end{equation}
         \STATE
         Determine the index sets of positive integers\\
         \begin{equation}\label{3-8}
         \mathcal{U}_k = \left\{ {{i_k}\Big| {|{b}^{(i_{k})}-z_{k}^{(i_{k})}-A^{(i_{k})}x_{k}|^2 \ge {\epsilon_k}} \parallel b - 
         Ax_{k}{\parallel _{2}^2}}\| A^{(i_{k})}\|_{2}^2 \right\}.\end{equation}\\
         \begin{equation}\label{3-9}
         \bar{\mathcal{U}}_k = \left\{ {j_k}\Big| {|A^*_{(j_{k})}z_{k}|^2 \ge \bar{\epsilon}_k} \parallel 
         A^*{z_{k}}{\parallel _{2}^2}\| A_{(j_{k})}\|_{2}^2 \right\}.\end{equation}\\
         \STATE
   Compute the $i$-th element of the vector 
     $\tilde{\mathbf{r}} _{k_1}$ by  \\
     $$\tilde{\mathbf{r} } _{k_1} ^{(i)}=\left\{ \begin{array}{l}
    {{b}^{(i_{k})}-z_{k}^{(i_{k})}-A^{(i_{k})}x_{k}},\qquad  \mathrm{if} \quad i \in \mathcal{U}_{k}
     \\
    0.\qquad \qquad \quad \enspace \mathrm{otherwise}
    \end{array}
    \right.$$
        \STATE
        Select $i_{k} \in {\mathcal{U}_k}$ with probability $\mathbb{P}$(row =$i_{k}$)=$ \mid {{{\tilde {\mathbf{r} }}_{k_1}}^{(i_{k})}}\mid^2/\left \| {{{\tilde {\mathbf{r} }}}_{k_1}} \right \|_{2}^2 $.\\
         \STATE
   Compute the $j$-th element of the vector 
     $\tilde{\mathbf{r}} _{k_2}$ by  \\
     $$\tilde{\mathbf{r} } _{k_2} ^{(j)}=\left\{ \begin{array}{l}
    {A_{(j_{k})}^*z_k},\qquad  \mathrm{if} \quad j \in \bar{\mathcal{U}}_{k}
     \\
    0.\qquad \qquad \quad \enspace \mathrm{otherwise}
    \end{array}
    \right.$$
        \STATE
        Select $j_{k} \in {\bar{\mathcal{U}}_k}$ with probability $\mathbb{P}$(column =$j_{k}$)=$ \mid {{{\tilde {\mathbf{r} }}_{k_2}}^{(j_{k})}}\mid^2  /\left \| {{{\tilde {\mathbf{r} }}}_{k_2}} \right \|_{2}^2 $.\\
            \STATE
            Set $z_{k+1} =(I_m-\frac{A_{(j_k)}A_{(j_k)}^*}{\|A_{(j_k)}\|_2^2})z_{k}.$\\
            \STATE
            Set $x_{k+1} =x_k+\frac{{b}^{(i_{k})}-z_k^{(i_k)}-A^{(i_{k})}x_{k}} {\| A^{(i_{k})}\|_{2} ^{2} }(A^{(i_{k})})^{*}$.

	\end{algorithmic}  
\end{algorithm}

From Theorem \ref{The-1}, we establish the following result regarding the convergence rate
of Algorithm \ref{algo-3-2}.

\begin{theorem}\label{The-2}
{\rm Let $x_{\star}=A^{\dagger}b$ be the least-norm solution of Eq.\eqref{1-1}. Then the iterative sequence $\{x_{k}\}$ generated by Algorithm \ref{algo-3-2} converges to $x_{\star }$ for any initial vector $x_0$ in expectation. Moreover, the corresponding error norm in expectation yields
}
\begin{equation}\label{3-10}
\mathbb{E}\|x_{k}-x_{\star}\|_2^2\leq(\max\{\alpha,\beta\})^{\left \lfloor \frac{k}{2} \right \rfloor }\left(1+2\frac{\|A\|_F^2}{t}\right)\|x_{\star}\|_2^2,    
\end{equation}
\rm where $\alpha=1-\frac{1}{2}\left(\frac{\|A\|_F^2}{\tau_{max}}+1\right)\frac{\lambda_{min}(A^*A)}{\|A\|_F^2}$ and $\beta=1-\frac{1}{2}\left(\frac{\|A\|_F^2}{\tilde{\tau}_{max}}+1\right)\frac{\lambda_{min}(A^*A)}{\|A\|_F^2}$ with $\tau_{max}=\|A\|_F^2-\min\limits_{i\in[m]}\|A^{(i)}\|_2^2$, $\tilde{\tau}_{max}=\|A\|_F^2-\min\limits_{i\in[n]}\|A_{(i)}\|_2^2$ and $t=\min\limits_{i\in[m]}\|A^{(i)}\|_2^2$.
\end{theorem}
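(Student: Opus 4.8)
The plan is to reduce the statement to three ingredients: (i)~Theorem~\ref{The-1} applied to the homogeneous iterate $z_k$, (ii)~an ``inexact greedy Kaczmarz'' one-step estimate for $x_k$, and (iii)~a geometric-sum argument in the spirit of Zouzias--Freris. First I would note that the $z$-updates in Algorithm~\ref{algo-3-2} (formula~\eqref{3-7}, index set~\eqref{3-9}, and line~8) coincide exactly with Algorithm~\ref{algo-3-1} started from $z_0=b$, and are completely decoupled from the $x$-iterates; hence Theorem~\ref{The-1} gives $\mathbb{E}\|z_k-b_{\mathcal{R}(A)^{\bot}}\|_2^2\le\beta^k\|b_{\mathcal{R}(A)}\|_2^2$, where $e_k:=z_k-b_{\mathcal{R}(A)^{\bot}}\in\mathcal{R}(A)$ and $\|z_0-b_{\mathcal{R}(A)^{\bot}}\|_2^2=\|b_{\mathcal{R}(A)}\|_2^2$.

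For the $x$-iterate I would use $b-z_k=b_{\mathcal{R}(A)}-e_k=Ax_{\star}-e_k$, so the numerator in the $x_{k+1}$-update (line~9) equals $-A^{(i_k)}(x_k-x_{\star})-e_k^{(i_k)}$. Writing $\tilde{x}_k:=x_k-x_{\star}$ and $P_i:=\frac{(A^{(i)})^{*}A^{(i)}}{\|A^{(i)}\|_2^2}$ one gets $\tilde{x}_{k+1}=(I-P_{i_k})\tilde{x}_k-\frac{e_k^{(i_k)}}{\|A^{(i_k)}\|_2^2}(A^{(i_k)})^{*}$; since $A^{(i_k)}(I-P_{i_k})\tilde{x}_k=0$ the two terms are orthogonal, and
\[ \|\tilde{x}_{k+1}\|_2^2=\|\tilde{x}_k\|_2^2-\frac{|A^{(i_k)}\tilde{x}_k|^2}{\|A^{(i_k)}\|_2^2}+\frac{|e_k^{(i_k)}|^2}{\|A^{(i_k)}\|_2^2}. \]
Taking $\mathbb{E}_k$, the last term is at most $\frac1t\mathbb{E}_k|e_k^{(i_k)}|^2\le\frac1t\|e_k\|_2^2$; for the middle term I would substitute $-A^{(i_k)}\tilde{x}_k=\hat{r}_k^{(i_k)}+e_k^{(i_k)}$ with the homogeneous residual $\hat{r}_k:=b-z_k-Ax_k$, use $|A^{(i_k)}\tilde{x}_k|^2\ge\frac12|\hat{r}_k^{(i_k)}|^2-|e_k^{(i_k)}|^2$, and then argue as in the proof of Theorem~\ref{The-1}: membership $i_k\in\mathcal{U}_k$ forces $\frac{|\hat{r}_k^{(i_k)}|^2}{\|A^{(i_k)}\|_2^2}\ge\epsilon_k\|b-Ax_k\|_2^2$; the probabilities in line~5 are proportional to $|\hat{r}_k^{(i_k)}|^2$ on $\mathcal{U}_k$, so $\mathbb{E}_k\frac{|\hat{r}_k^{(i_k)}|^2}{\|A^{(i_k)}\|_2^2}\ge\epsilon_k\|b-Ax_k\|_2^2$; the max-versus-weighted-average estimate for $\epsilon_k$ in~\eqref{3-6} gives $\epsilon_k\ge\frac12(\frac{\|A\|_F^2}{\tau_{max}}+1)\frac{1}{\|A\|_F^2}$; and finally $\|b-Ax_k\|_2^2=\|A\tilde{x}_k\|_2^2+\|b_{\mathcal{R}(A)^{\bot}}\|_2^2\ge\lambda_{min}(A^{*}A)\|\tilde{x}_k\|_2^2$ since $\tilde{x}_k\in\mathcal{R}(A^{*})$. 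Collecting terms gives a recursion of the form $\mathbb{E}_k\|x_{k+1}-x_{\star}\|_2^2\le\alpha\|x_k-x_{\star}\|_2^2+\frac{c}{t}\|e_k\|_2^2$ for an absolute constant $c$, and taking full expectations and inserting Theorem~\ref{The-1} yields $f_{k+1}\le\alpha f_k+\frac{c}{t}\beta^k\|b_{\mathcal{R}(A)}\|_2^2$ with $f_k:=\mathbb{E}\|x_k-x_{\star}\|_2^2$.

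The last step is to unroll this scalar recursion, $f_k\le\alpha^k f_0+\frac{c}{t}\|b_{\mathcal{R}(A)}\|_2^2\sum_{j=0}^{k-1}\alpha^{k-1-j}\beta^j$, split the convolution sum at the midpoint to bound it by a constant times $(\max\{\alpha,\beta\})^{\lfloor k/2\rfloor}$, and then substitute $\|b_{\mathcal{R}(A)}\|_2=\|Ax_{\star}\|_2\le\|A\|_F\|x_{\star}\|_2$ together with $\|x_0-x_{\star}\|_2\le\|x_{\star}\|_2$ (for the initialization $x_0=0$), arriving at~\eqref{3-10}.

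I expect the middle paragraph to be the crux. Unlike REK, the row $i_k$ in Algorithm~\ref{algo-3-2} is chosen greedily according to the \emph{perturbed} residual $\hat{r}_k=-A\tilde{x}_k-e_k$ rather than according to $A\tilde{x}_k$ itself, so the greedy contraction cannot be quoted verbatim; the correlation between $i_k$ and the perturbation $e_k$ must be absorbed (via $|a+b|^2\ge\frac12|a|^2-|b|^2$, equivalently an AM--GM split of the cross term $\mathrm{Re}(\hat{r}_k^{(i_k)}\overline{e_k^{(i_k)}})$), which is what produces the extra $O(\|e_k\|_2^2/t)$ forcing term and, ultimately, the $\|A\|_F^2/t$ factor in~\eqref{3-10}; one must also check that the lower bound on $\epsilon_k$ survives this perturbation. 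A secondary, routine difficulty is extracting a single geometric factor with exponent $\lfloor k/2\rfloor$ from $\sum_j\alpha^{k-1-j}\beta^j$ without losing a spurious factor of $k$, handled by the standard ``split the sum in half'' device.
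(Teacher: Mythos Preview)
Your skeleton is the paper's: split $x_{k+1}-x_\star$ orthogonally into a ``clean'' GRK increment and a noise increment, bound the noise by $\|z_k-b_{\mathcal R(A)^\perp}\|_2^2/t$, invoke Theorem~\ref{The-1} for the $z$-sequence, and then unroll the scalar recursion by the Zouzias--Freris device of splitting at $k_p=\lfloor k/2\rfloor$, $k_q=k-k_p$. Your Pythagorean identity is exactly their equations \eqref{3-11}--\eqref{3-14} in different notation, and your final geometric-sum paragraph reproduces \eqref{3-17}--\eqref{3-18} together with $\|b_{\mathcal R(A)}\|_2^2\le\lambda_{\min}(A^*A)\|x_\star\|_2^2$ (from $b_{\mathcal R(A)}=Ax_\star$) to eliminate $\|b_{\mathcal R(A)}\|_2^2$ at the end.

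The one substantive divergence is how the contraction factor $\alpha$ is obtained. The paper simply writes \eqref{3-13} with the selection probabilities expressed through the \emph{clean} residual $b_{\mathcal R(A)}-Ax_k$ and quotes the GRK contraction verbatim; it does not engage with the issue you raise, namely that Algorithm~\ref{algo-3-2} actually chooses $i_k$ according to the perturbed residual $b-z_k-Ax_k$. Your AM--GM fix $|A^{(i_k)}\tilde x_k|^2\ge\tfrac12|\hat r_k^{(i_k)}|^2-|e_k^{(i_k)}|^2$ is the honest route, but it halves the gain from the greedy step, so after taking $\mathbb E_k$ you would land at a factor $1-\tfrac14\bigl(\tfrac{\|A\|_F^2}{\tau_{\max}}+1\bigr)\tfrac{\lambda_{\min}(A^*A)}{\|A\|_F^2}$ rather than the stated $\alpha$, and with a forcing constant $c>1$ in front of $\|e_k\|_2^2/t$. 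The $\tau_{\max}$ lower bound on $\epsilon_k$ is likewise delicate: it relies on the $i_{k-1}$-entry of the selection residual vanishing, but $(b-z_k-Ax_k)^{(i_{k-1})}=(z_{k-1}-z_k)^{(i_{k-1})}$, which is not zero after the $z$-update. In short, if you want to reproduce \eqref{3-10} with its constants, follow the paper and assert \eqref{3-13} directly; if you insist on handling the noisy selection rigorously, your plan is the right idea but will yield a strictly weaker rate than the theorem states.
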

\begin{proof}
Similar to \cite{Zou}.
Define 
\begin{equation}\label{3-11}   
\tilde{x}_{k+1}=x_k+\frac{b_{\mathcal{R}(A)}^{(i_{k})}-A^{(i_{k})}x_{k}} {\| A^{(i_{k})}\|_{2} ^{2} }(A^{(i_{k})})^{*}.
\end{equation}
Note that $\tilde{x}_{k+1}$ is the projection of $x_{k}$ on the hyperplane $$\mathcal{H}_{i_k}:=\left\{x:\left \langle A^{(i_{k})},x \right \rangle =b_{\mathcal{R}(A)}^{(i_{k})}\right\}$$ by GRK. Then it follows that
\begin{equation}\label{3-12}
x_{k+1}=x_k+\frac{b_{\mathcal{R}(A)}^{(i_{k})}+w^{(i_{k})}-A^{(i_{k})}x_{k}}{\| A^{(i_{k})}\|_{2} ^{2}}(A^{(i_{k})})^{*},
\end{equation} 
where $x_{k+1}$ is the projection of $x_{k}$ on the hyperplane $$\mathcal{H}_{i_k}^{w}:=\left\{x:\left \langle A^{(i_{k})},x \right \rangle =b_{\mathcal{R}(A)}^{(i_{k})}+w^{(i_{k})}\right\}$$ by GRK.
Therefore,
$$
x_{k+1}-\tilde{x}_{k+1}=\frac{w^{(i_{k})}}{\| A^{(i_{k})}\|_{2} ^{2}}(A^{(i_{k})})^{*}
$$
and
$$
\tilde{x}_{k+1}-x_{\star}=\left(I-\frac{(A^{(i_{k})})^{*}A^{(i_{k})}}{ \|A^{(i_{k})}\|_{2} ^{2}}\right)(x_k-x_{\star}).
$$
which shows that $(x_{k+1}-\tilde{x}_{k+1})^*(\tilde{x}_{k+1}-x_{\star})=0$.
Furthermore, by orthogonality, it follows that
$$
\mathbb{E}_k\|x_{k+1}-x_{\star}\|_2^2=\mathbb{E}_k\|\tilde{x}_{k+1}-x_{\star}\|_2^2+\mathbb{E}_k\|x_{k+1}-\tilde{x}_{k+1}\|_2^2.
$$
For the first term of the right-hand side, it follows that
\begin{equation}\label{3-13}
\begin{aligned}
\mathbb{E}_k\|\tilde{x}_{k+1}-x_{\star}\|_2^2&=\|x_k-x_{\star}\|_2^2-\sum\limits_{{i_k} \in {\mathcal{U}_{k}}}\frac{|b_{\mathcal{R}(A)}^{(i_{k})}-A^{(i_{k})}x_k|^2}{\sum\limits_{{i} \in {\mathcal{U}_{k}}}|b_{\mathcal{R}(A)}^{(i)}-A^{(i)}x_k|^2} \frac{|b_{\mathcal{R}(A)}^{(i_{k})}-A^{(i_{k})}x_k|^2}{\|A^{(i_{k})}\|_{2}^{2}}\\
&\leq \left[1-\frac{1}{2}\left(\frac{\|A\|_F^2}{\tau_{max}}+1\right)\frac{\lambda_{min}(A^*A)}{\|A\|_F^2}\right]\|x_k-x_{\star}\|_2^2.
\end{aligned}
\end{equation}
For the second term of the right-hand side, it follows that
\begin{equation}\label{3-14}
\begin{aligned}
\|x_{k+1}-\tilde{x}_{k+1}\|_2^2=\frac{|w^{(i_{k})}|^2}{\| A^{(i_{k})}\|_{2} ^{2}}=\frac{|b_{\mathcal{R}(A)^{\bot}}^{(i_{k})}-z_k^{(i_{k})}|^2}{\| A^{(i_{k})}\|_{2} ^{2}}\leq \frac{\|b_{\mathcal{R}(A)^{\bot}}-z_{k}\|_2^2}{t},
\end{aligned}
\end{equation}
where $t=\min\limits_{i\in[m]}\|A^{(i)}\|_2^2$. As a result, we have
\begin{equation}\label{3-15}
\mathbb{E}_k\|x_{k+1}-x_{\star}\|_2^2\leq \alpha\|x_k-x_{\star}\|_2^2+\frac{1}{t}\|b_{\mathcal{R}(A)^{\bot}}-z_{k}\|_2^2.   
\end{equation}
Observe from Theorem \ref{The-1} that
\begin{equation}\label{3-16}
\mathbb{E}\|z_l-b_{\mathcal{R}(A)^{\bot}}\|_2^2\leq \beta^l\|b_{\mathcal{R}(A)}\|_2^2\leq\|b_{\mathcal{R}(A)}\|_2^2
\end{equation}
holds for $l \geq 0$. Define $k_p:=\left \lfloor k/2 \right \rfloor$ and $k_q:=k-k_p$, with $k$ being a nonnegative integer. Following by Eqs.\eqref{3-15} and \eqref{3-16} yields
\begin{equation}\label{3-17}
\begin{aligned}
\mathbb{E}\|x_{k_p}-x_{\star}\|_2^2&\leq \alpha\mathbb{E}\|x_{k_p-1}-x_{\star}\|_2^2+ \frac{1}{t}\mathbb{E}\|b_{\mathcal{R}(A)^{\bot}}-z_{k_p-1}\|_2^2   \\
&\leq \alpha\mathbb{E}\|x_{k_p-1}-x_{\star}\|_2^2+ \frac{\|b_{\mathcal{R}(A)}\|_2^2}{t}\\
&\leq \alpha^{k_p}\|x_{0}-x_{\star}\|_2^2+\sum_{l=0}^{k_p-1}\alpha^{l}\frac{\|b_{\mathcal{R}(A)}\|_2^2}{t}\\
&\leq\|x_{\star}\|_2^2+\sum_{l=0}^{\infty }\alpha^{l}\frac{\|b_{\mathcal{R}(A)}\|_2^2}{t}\\
&\leq\|x_{\star}\|_2^2+\frac{\|A\|_F^2}{\lambda_{min}(A^*A)}\frac{\|b_{\mathcal{R}(A)}\|_2^2}{t}.
\end{aligned}
\end{equation}
For any $l\geq 0$, it follows that
\begin{equation}\label{3-18}
\mathbb{E}\|z_{l+k_p}-b_{\mathcal{R}(A)^{\bot}}\|_2^2\leq \beta^{l+k_p}\|b_{\mathcal{R}(A)}\|_2^2\leq\beta^{k_p}\|b_{\mathcal{R}(A)}\|_2^2.
\end{equation}
Consequently, by Eqs.\eqref{3-17} and \eqref{3-18}, it follows that
$$
\begin{aligned}
\mathbb{E}\|x_{k}-x_{\star}\|_2^2&\leq \alpha^{k_q}
\mathbb{E}\|x_{k_p}-x_{\star}\|_2^2+\beta^{k_p}\sum_{l=0}^{k_q-1}\alpha^{l}\frac{\|b_{\mathcal{R}(A)}\|_2^2}{t}\\
&\leq \alpha^{k_q}\left(\|x_{\star}\|_2^2+\frac{\|A\|_F^2}{\lambda_{min}(A^*A)}\frac{\|b_{\mathcal{R}(A)}\|_2^2}{t}\right)+\beta^{k_p}\frac{\|A\|_F^2}{\lambda_{min}(A^*A)}\frac{\|b_{\mathcal{R}(A)}\|_2^2}{t}\\
&\leq \alpha^{k_q}\|x_{\star}\|_2^2+(\alpha^{k_q}+\beta^{k_p})\frac{\|A\|_F^2}{\lambda_{min}(A^*A)}\frac{\|b_{\mathcal{R}(A)}\|_2^2}{t}\\
&\leq\alpha^{k_q}\|x_{\star}\|_2^2+(\alpha^{k_q}+\beta^{k_p})\frac{\|A\|_F^2}{\lambda_{min}(A^*A)}\frac{\lambda_{min}(A^*A)}{t}\|x_{\star}\|_2^2\\
&\leq(\max\{\alpha,\beta\})^{k_p}\left(1+2\frac{\|A\|_F^2}{t}\right)\|x_{\star}\|_2^2.
\end{aligned}
$$
\end{proof}

Note that GREK has to construct the indicator sets $\mathcal{U}_k$, $\bar{\mathcal{U}}_k$, and compute probabilities during iterations, which requires much more storage and computational operations compared to methods without involving these formulas, especially in large data problems. To deal with these difficulties, Jiang and Wu \cite{Jiang} proposed a semi-randomized Kaczmarz method, which outperforms the GRK method. They showed that the row corresponding to the current largest homogeneous residual is selected as the working row for higher computational efficiency. Likewise, we present a semi-randomized extended Kaczmarz method (SREK), built upon selecting the row and column corresponding to the current largest homogeneous residuals, for solving least-squares
problem \eqref{1-1}. We first give the following semi-randomized orthogonal projection method, which efficiently computes the orthogonal projection of $b$ onto the column space of $A$.
\begin{algorithm}
	\caption{A semi-randomized orthogonal projection}
	\label{algo-3-3}
	\begin{algorithmic}[1]
        \REQUIRE $A, b$\\
        \ENSURE $z_{k+1}$\\
        \STATE
        Initialize $z_0= b$
        \STATE
       For $k=0, 1, 2,\cdots $, until convergence, do:\\
            \STATE Select $i_k\in\{1,2,\dots,n\} $ satisfying $\frac{|A_{(i_k)}z_k|}{\|A_{(i_k)}\|_2}=\max\limits_{1\leq i \leq n}\frac{|A_{(i)}z_k|}{\|A_{(i)}\|_2}.$
    \STATE
 Compute $z_{k+1} =(I_m-\frac{A_{(i_k)}A_{(i_k)}^*}{\|A_{(i_k)}\|_2^2})z_{k}.$
	\end{algorithmic}  
\end{algorithm}

An important result for guaranteeing the convergence of Algorithm \ref{algo-3-3}  is stated in the following theorem.
\begin{theorem}\label{The-3}
{\rm  Let $b$ be the initial guess of Algorithm \ref{algo-3-3}. Then the iterative sequence $\{z_{k}\}$ generated by Algorithm \ref{algo-3-3} converges to $b_{\mathcal{R}(A)^{\bot}}$ in expectation. Moreover, the corresponding error norm in expectation yields }
\end{theorem}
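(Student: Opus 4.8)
The plan is to mirror the proof of Theorem~\ref{The-1}, but with the greedy randomized selection replaced by the deterministic maximal-residual rule, which deletes the averaging of the maximum against $1/\|A\|_F^2$ and so sharpens the per-step factor. First I would record the elementary geometry. Writing $P_i=I_m-A_{(i)}A_{(i)}^*/\|A_{(i)}\|_2^2$ for the orthogonal projector onto $A_{(i)}^{\bot}$, we have $P_i^*=P_i=P_i^2$, and $A_{(i)}^*b_{\mathcal{R}(A)^{\bot}}=0$ for every $i\in[n]$ since $b_{\mathcal{R}(A)^{\bot}}\perp\mathcal{R}(A)$; hence $P_ib_{\mathcal{R}(A)^{\bot}}=b_{\mathcal{R}(A)^{\bot}}$. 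Setting $e_k:=z_k-b_{\mathcal{R}(A)^{\bot}}$ gives $e_0=b_{\mathcal{R}(A)}$, the recursion $e_{k+1}=P_{i_k}e_k$, and---because each $P_i$ maps $\mathcal{R}(A)$ into itself (it subtracts a multiple of the column $A_{(i)}$)---that $e_k\in\mathcal{R}(A)$ for all $k$.

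Next I would extract the one-step decrease from the projection identity
$$\|e_{k+1}\|_2^2=\|P_{i_k}e_k\|_2^2=\|e_k\|_2^2-\frac{|A_{(i_k)}^*e_k|^2}{\|A_{(i_k)}\|_2^2}=\|e_k\|_2^2-\frac{|A_{(i_k)}^*z_k|^2}{\|A_{(i_k)}\|_2^2},$$
where the last equality again uses $A_{(i_k)}^*b_{\mathcal{R}(A)^{\bot}}=0$. By the selection rule of Algorithm~\ref{algo-3-3}, $|A_{(i_k)}^*z_k|^2/\|A_{(i_k)}\|_2^2$ is the largest of the numbers $|A_{(i)}^*z_k|^2/\|A_{(i)}\|_2^2$, $i\in[n]$. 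For $k=0$ I would bound this largest value below by the $\|A_{(i)}\|_2^2$-weighted average of those numbers, getting $\ge\|A^*z_0\|_2^2/\|A\|_F^2$. For $k\ge1$ one first observes that the previous projection forces $A_{(i_{k-1})}^*z_k=0$, so the maximum over $i\in[n]$ equals the maximum over $i\ne i_{k-1}$; bounding the latter below by the weighted average over the restricted index set yields $\ge\|A^*z_k\|_2^2/(\|A\|_F^2-\|A_{(i_{k-1})}\|_2^2)\ge\|A^*z_k\|_2^2/\tilde{\tau}_{max}$ with $\tilde{\tau}_{max}=\|A\|_F^2-\min_{i\in[n]}\|A_{(i)}\|_2^2$. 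Finally, since $e_k\in\mathcal{R}(A)$ and the nonzero eigenvalues of $AA^*$ and $A^*A$ coincide, $\|A^*z_k\|_2^2=\|A^*e_k\|_2^2\ge\lambda_{min}(A^*A)\|e_k\|_2^2$.

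Combining these bounds gives $\|e_1\|_2^2\le\bigl(1-\lambda_{min}(A^*A)/\|A\|_F^2\bigr)\|e_0\|_2^2$ and, for every $k\ge1$, $\|e_{k+1}\|_2^2\le\bigl(1-\lambda_{min}(A^*A)/\tilde{\tau}_{max}\bigr)\|e_k\|_2^2$; a short induction then produces the stated estimate, which one may write as
$$\mathbb{E}_k\|z_k-b_{\mathcal{R}(A)^{\bot}}\|_2^2\le\left(1-\frac{\lambda_{min}(A^*A)}{\tilde{\tau}_{max}}\right)^{k-1}\left(1-\frac{\lambda_{min}(A^*A)}{\|A\|_F^2}\right)\|b_{\mathcal{R}(A)}\|_2^2.$$
Since the selection in Algorithm~\ref{algo-3-3} is deterministic, the conditional expectation is immaterial here and the bound in fact holds pathwise; I keep the notation $\mathbb{E}_k$ only for consistency with the other results. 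I expect the only delicate point to be the $k\ge1$ estimate: a priori the maximizing index at step $k$ could equal $i_{k-1}$, but the corresponding residual has already been annihilated by the previous projection, and it is precisely this that licenses dropping $i_{k-1}$ from the averaging set and thereby replacing $\|A\|_F^2$ by the smaller $\tilde{\tau}_{max}$; the initial step $k=0$, having no predecessor, must retain the weaker factor, which is why the final bound carries one $\|A\|_F^2$-factor.
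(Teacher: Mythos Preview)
Your proof is correct and follows essentially the same route as the paper's: both use the projection identity $\|e_{k+1}\|_2^2=\|e_k\|_2^2-|A_{(i_k)}^*z_k|^2/\|A_{(i_k)}\|_2^2$, bound the maximal normalized residual below by a weighted average after dropping the index $i_{k-1}$ (whose contribution vanishes because $z_k=P_{i_{k-1}}z_{k-1}$), and invoke $\|A^*e_k\|_2^2\ge\lambda_{\min}(A^*A)\|e_k\|_2^2$ on $\mathcal R(A)$.

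The one substantive difference is that you separate the first step from the rest. The paper's proof applies the ``drop one index'' trick uniformly and states the contraction factor $(1-\lambda_{\min}(A^*A)/\tilde\tau_{\max})^k$ for all $k$, but at $k=0$ there is no previously annihilated coordinate to remove, so in general only the weaker factor $1-\lambda_{\min}(A^*A)/\|A\|_F^2$ is available for that step; your bound $(1-\lambda_{\min}/\tilde\tau_{\max})^{k-1}(1-\lambda_{\min}/\|A\|_F^2)$ is thus the rigorous outcome of the argument, and the paper's displayed inequality~\eqref{3-19} is slightly optimistic at $k=1$. Your remark that the conditional expectation is vacuous here, since Algorithm~\ref{algo-3-3} is deterministic, is also well taken.
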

\begin{equation}\label{3-19}
\mathbb{E}_{k}\|z_k-b_{\mathcal{R}(A)^{\bot}}\|_2^2\leq \left(1-\frac{\lambda_{min}(A^*A)}{\tilde{\tau}_{max}}\right)^k\|b_{\mathcal{R}(A)}\|_2^2,   
\end{equation}
where $\tilde{\tau}_{max}=\|A\|_F^2-\min\limits_{i\in[n]}\|A_{(i)}\|_2^2$.
\begin{proof}
From Algorithm \ref{algo-3-3}, it can be stated that
\begin{equation}\label{3-20}
\begin{aligned}
\max_{1\leq i \leq n}\left \{ \frac{|A_{(i)}^*e_{k-1}|^2}{\|A_{(i)}\|_2^2} \right \}&=\max_{1\leq i \leq n}\left \{ \frac{|A_{(i)}^*z_{k-1}|^2}{\|A_{(i)}\|_2^2} \right \}\\
&=\frac{\max_{1\leq i \leq n}\left \{ \frac{|A_{(i)}^*z_{k-1}|^2}{\|A_{(i)}\|_2^2} \right \}}{\|A^*z_{k-1}\|_2^2}\|A^*z_{k-1}\|_2^2\\
&=\frac{\max_{1\leq i \leq n}\left \{ \frac{|A_{(i)}^*z_{k-1}|^2}{\|A_{(i)}\|_2^2} \right \}}{\sum_{i=1}^{n}\frac{|A_{i}^*z_{k-1}|^2}{\|A_{(i)}\|_2^2}\|A_{(i)}\|_2^2}\|A^*z_{k-1}\|_2^2\\
&=\frac{\max_{1\leq i \leq n}\left \{ \frac{|A_{(i)}^*z_{k-1}|^2}{\|A_{(i)}\|_2^2} \right \}}{\sum_{i=1,i\neq i_{k-1}}^{n}\frac{|A_{(i)}^*z_{k-1}|^2}{\|A_{(i)}\|_2^2}\|A_{(i)}\|_2^2}\|A^*z_{k-1}\|_2^2\\
&\geq \frac{\|A^*z_{k-1}\|_2^2}{\sum_{i=1,i\neq i_{k-1}}^{n}\|A_{(i)}\|_2^2}\\
&\geq \frac{\|A^*z_{k-1}\|_2^2}{\tilde{\tau}_{max}}\\
&\geq \frac{\lambda_{min}(A^*A)}{\tilde{\tau}_{max}}\|e_{k-1}\|_2^2.
\end{aligned}
\end{equation}
Observe from Algorithm \ref{algo-3-3} and Eq.\eqref{3-5} that
$$
e_k=P_{i_k}e_{k-1}.
$$
Consequently, from Eq.\eqref{3-20} and the above relations, it follows that
$$
\begin{aligned}
\mathbb{E}_{k}\|e_k\|_2^2&=\mathbb{E}_{k}\|P_{i_k}e_{k-1}\|_2^2\\
&=\mathbb{E}_{k}\left \langle P_{i_k}e_{k-1},P_{i_k}e_{k-1}  \right \rangle \\
&=\mathbb{E}_{k}\left \langle e_{k-1},P_{i_k}e_{k-1}  \right \rangle\\
&=\|e_{k-1}\|_2^2-\mathbb{E}_{k}\left \langle e_{k-1},\frac{A_{(i_k)}A_{(i_k)}^*}{\|A_{(i_k)}\|_2^2}e_{k-1}  \right \rangle\\
&=\|e_{k-1}\|_2^2-\max_{1\leq i \leq n}\left \{ \frac{|A_{i}^*e_{k-1}|^2}{\|A_{i}\|_2^2} \right \} \\
&\leq (1-\frac{\lambda_{min}(A^*A)}{\tilde{\tau}_{max}})\|e_{k-1}\|_2^2.
\end{aligned}
$$
The theorem follows immediately by induction.
\end{proof}

In what follows, we present the semi-randomized extended Kaczmarz method for solving Eq.\eqref{1-1}. Indeed, the proposed method can be regarded as a specific combination of the semi-randomized orthogonal projection method together with the semi-randomized Kaczmarz method.
\begin{algorithm}
	\caption{A semi-randomized extended Kaczmarz method}
	\label{algo-3-4}
	\begin{algorithmic}[1]
        \REQUIRE $A, b,z_0= b$\\
        \ENSURE $x_{k+1}$, $z_{k+1}$\\
        \STATE
       For $k=0, 1, 2,\cdots $, until convergence, do:\\
            \STATE Select $i_k\in\{1,2,\dots,m\} $ satisfying $$\frac{|{b}^{(i_{k})}-A^{(i_{k})}x_{k}|} {\| A^{(i_{k})}\|_{2} }=\max_{1\leq i \leq m}\frac{|{b}^{(i)}-A^{(i)}x_{k}|} {\| A^{(i)}\|_{2} }.$$
            \STATE Select $j_k\in\{1,2,\dots,n\} $ satisfying $$\frac{|A_{(j_k)}^*z_k|}{\|A_{(j_k)}\|_2}=\max_{1\leq j \leq n}\frac{|A_{(j)}^*z_k|}{\|A_{(j)}\|_2}.$$
            \STATE
            Set $z_{k+1} =(I_m-\frac{A_{(j_k)}A_{(j_k)}^*}{\|A_{(j_k)}\|_2^2})z_{k}.$\\
            \STATE
            Set $x_{k+1} =x_k+\frac{{b}^{(i_{k})}-z_k^{(i_k)}-A^{(i_{k})}x_{k}} {\| A^{(i_{k})}\|_{2} ^{2} }(A^{(i_{k})})^{*}$.

	\end{algorithmic}  
\end{algorithm}

An important result for guaranteeing the convergence of Algorithm \ref{algo-3-4}  is stated in the following theorem.
\begin{theorem}\label{The-4}
{\rm Let $x_{\star}=A^{\dagger}b$ be the  least-norm solution of the least-squares
problem \eqref{1-1}. Then the iterative sequence $\{x_{k}\}$ generated by Algorithm \ref{algo-3-4} converges to $x_{\star }$ for any initial vector $x_0$ in expectation. Moreover, the corresponding error norm in expectation yields
}
\begin{equation}\label{3-21}
\mathbb{E}\|x_{k}-x_{\star}\|_2^2\leq(\max\{\hat{\alpha},\hat{\beta}\})^{\left \lfloor \frac{k}{2} \right \rfloor }\left(1+2\frac{\tau_{max}}{t}\right)\|x_{\star}\|_2^2,    
\end{equation}
\rm where $\hat{\alpha}=1-\frac{\lambda_{min}(A^*A)}{\tau_{max}}$ and $\hat{\beta}=1-\frac{\lambda_{min}(A^*A)}{\tilde{\tau}_{max}}$ with $\tau_{max}=\|A\|_F^2-\min\limits_{i\in[m]}\|A^{(i)}\|_2^2$, $\tilde{\tau}_{max}=\|A\|_F^2-\min\limits_{i\in[n]}\|A_{(i)}\|_2^2$ and $t=\min\limits_{i\in[m]}\|A^{(i)}\|_2^2$.
\end{theorem}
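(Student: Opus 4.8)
The plan is to adapt the proof of Theorem~\ref{The-2} (which itself follows Zouzias and Freris~\cite{Zou}), replacing its greedy contraction estimates by the semi-randomized ones from Theorem~\ref{The-3}. I would first introduce the auxiliary iterate
$$
\tilde{x}_{k+1}=x_k+\frac{b_{\mathcal{R}(A)}^{(i_{k})}-A^{(i_{k})}x_{k}}{\|A^{(i_{k})}\|_2^{2}}(A^{(i_{k})})^{*},
$$
the projection of $x_k$ onto the ``noise-free'' hyperplane $\{x:\langle A^{(i_{k})},x\rangle=b_{\mathcal{R}(A)}^{(i_{k})}\}$, where $i_k$ is the row index selected in Step~2 of Algorithm~\ref{algo-3-4}. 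Since $Ax_{\star}=AA^{\dagger}b=b_{\mathcal{R}(A)}$, the point $x_{\star}$ lies on this hyperplane, so $\tilde{x}_{k+1}-x_{\star}=(I-\frac{(A^{(i_{k})})^{*}A^{(i_{k})}}{\|A^{(i_{k})}\|_2^{2}})(x_k-x_{\star})$ is orthogonal to $(A^{(i_{k})})^{*}$, whereas $x_{k+1}-\tilde{x}_{k+1}=\frac{b_{\mathcal{R}(A)^{\bot}}^{(i_{k})}-z_k^{(i_{k})}}{\|A^{(i_{k})}\|_2^{2}}(A^{(i_{k})})^{*}$ is a multiple of $(A^{(i_{k})})^{*}$; the Pythagorean identity then gives $\mathbb{E}_k\|x_{k+1}-x_{\star}\|_2^{2}=\mathbb{E}_k\|\tilde{x}_{k+1}-x_{\star}\|_2^{2}+\mathbb{E}_k\|x_{k+1}-\tilde{x}_{k+1}\|_2^{2}$.

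Next I would estimate the two terms as in~\eqref{3-13} and~\eqref{3-20}. The map $x_k\mapsto\tilde{x}_{k+1}$ is one semi-randomized Kaczmarz step for the consistent system $Ax=b_{\mathcal{R}(A)}$, so running the ``maximum dominates the row-weighted average with the previously used row deleted'' argument of~\eqref{3-20} over $i\in[m]$ with weights $\|A^{(i)}\|_2^{2}$ gives $\mathbb{E}_k\|\tilde{x}_{k+1}-x_{\star}\|_2^{2}\leq\hat{\alpha}\|x_k-x_{\star}\|_2^{2}$ with $\hat{\alpha}=1-\lambda_{min}(A^*A)/\tau_{max}$. For the remaining term, exactly as in~\eqref{3-14}, $\|x_{k+1}-\tilde{x}_{k+1}\|_2^{2}=|b_{\mathcal{R}(A)^{\bot}}^{(i_{k})}-z_k^{(i_{k})}|^{2}/\|A^{(i_{k})}\|_2^{2}\leq\|b_{\mathcal{R}(A)^{\bot}}-z_k\|_2^{2}/t$ with $t=\min_{i\in[m]}\|A^{(i)}\|_2^{2}$. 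Adding the two bounds yields the one-step recursion $\mathbb{E}_k\|x_{k+1}-x_{\star}\|_2^{2}\leq\hat{\alpha}\|x_k-x_{\star}\|_2^{2}+\frac{1}{t}\|b_{\mathcal{R}(A)^{\bot}}-z_k\|_2^{2}$.

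Finally I would close the recursion with the decay of the orthogonal-projection sequence. Since Algorithm~\ref{algo-3-4} updates $z_k$ by the semi-randomized orthogonal projection, Theorem~\ref{The-3} gives $\mathbb{E}\|z_l-b_{\mathcal{R}(A)^{\bot}}\|_2^{2}\leq\hat{\beta}^{\,l}\|b_{\mathcal{R}(A)}\|_2^{2}$ for all $l\geq0$ with $\hat{\beta}=1-\lambda_{min}(A^*A)/\tilde{\tau}_{max}$. Taking total expectations in the one-step inequality, setting $k_p=\lfloor k/2\rfloor$ and $k_q=k-k_p$, I would first unroll it for $k_p$ steps and use $\sum_{l\geq0}\hat{\alpha}^{\,l}=\tau_{max}/\lambda_{min}(A^*A)$ to bound $\mathbb{E}\|x_{k_p}-x_{\star}\|_2^{2}$ by $\|x_{\star}\|_2^{2}+\frac{\tau_{max}}{\lambda_{min}(A^*A)}\frac{\|b_{\mathcal{R}(A)}\|_2^{2}}{t}$, then unroll the remaining $k_q$ steps, insert $\mathbb{E}\|z_{l+k_p}-b_{\mathcal{R}(A)^{\bot}}\|_2^{2}\leq\hat{\beta}^{\,k_p}\|b_{\mathcal{R}(A)}\|_2^{2}$ together with the bound $\|b_{\mathcal{R}(A)}\|_2^{2}\leq\lambda_{min}(A^*A)\|x_{\star}\|_2^{2}$ used in Theorem~\ref{The-2}, and absorb $\hat{\alpha}^{k_q}$ and $\hat{\beta}^{k_p}$ into $(\max\{\hat{\alpha},\hat{\beta}\})^{k_p}$ to obtain~\eqref{3-21}. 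The two Pythagorean estimates and the geometric sums are routine; the step needing genuine care is the coupled-sequence bookkeeping — choosing the $k_p/k_q$ split so that both the $\hat{\alpha}$- and the $\hat{\beta}$-decays can be carried through simultaneously and the mixed terms collapse into a single power of $\max\{\hat{\alpha},\hat{\beta}\}$ — handled exactly as in the proof of Theorem~\ref{The-2}.
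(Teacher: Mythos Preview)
Your proposal is correct and follows essentially the same route as the paper's proof: the same auxiliary iterate $\tilde{x}_{k+1}$ and Pythagorean split as in~\eqref{3-22}, the semi-randomized contraction~\eqref{3-23} in place of the greedy one, the perturbation bound~\eqref{3-24}, and the identical $k_p/k_q$ unrolling from Theorem~\ref{The-2} with $\sum_{l\geq0}\hat{\alpha}^{\,l}=\tau_{max}/\lambda_{min}(A^*A)$ and the final absorption into $(\max\{\hat{\alpha},\hat{\beta}\})^{k_p}$. There is no substantive difference between your argument and the paper's.
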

\begin{proof}

Similar to Theorem \ref{The-2}, we have
\begin{equation}\label{3-22}
\mathbb{E}_k\|x_{k+1}-x_{\star}\|_2^2=\mathbb{E}_k\|\tilde{x}_{k+1}-x_{\star}\|_2^2+\mathbb{E}_k\|x_{k+1}-\tilde{x}_{k+1}\|_2^2.
\end{equation}
For the first term of the right-hand side, it follows that
\begin{equation}\label{3-23}
\begin{aligned}
\mathbb{E}_k\|\tilde{x}_{k+1}-x_{\star}\|_2^2&=\|x_k-x_{\star}\|_2^2-\max_{i_k\in[m]}\left\{\frac{|b_{\mathcal{R}(A)}^{(i_{k})}-A^{(i_{k})}x_k|^2}{\|A^{(i_{k})}\|_{2}^{2}}\right\}\\
&\leq (1-\frac{\lambda_{min}(A^*A)}{\tau_{max}})\|x_k-x_{\star}\|_2^2.
\end{aligned}
\end{equation}
For the second term of the right-hand side of Eq.\eqref{3-22}, it follows that
\begin{equation}\label{3-24}
\begin{aligned}
\mathbb{E}_k\|x_{k+1}-\tilde{x}_{k+1}\|_2^2=\frac{|w^{(i_{k})}|^2}{\| A^{(i_{k})}\|_{2} ^{2}}=\frac{|b_{\mathcal{R}(A)^{\bot}}^{(i_{k})}-z_k^{(i_{k})}|^2}{\| A^{(i_{k})}\|_{2} ^{2}}\leq\frac{\|b_{\mathcal{R}(A)^{\bot}}-z_{k}\|_2^2}{t},
\end{aligned}
\end{equation}
where $t=\min\limits_{i\in[m]}\|A^{(i)}\|_2^2$. Substituting \eqref{3-23} and \eqref{3-24} into \eqref{3-22} yields
\begin{equation}\label{3-25}
\mathbb{E}_k\|x_{k+1}-x_{\star}\|_2^2\leq \hat{\alpha}\|x_k-x_{\star}\|_2^2+\frac{\|b_{\mathcal{R}(A)^{\bot}}-z_{k}\|_2^2}{t}.
\end{equation}
According to Theorem \ref{The-3}, it can be stated that
\begin{equation}\label{3-26}
\mathbb{E}\|z_l-b_{\mathcal{R}(A)^{\bot}}\|_2^2\leq \hat{\beta}^l\|b_{\mathcal{R}(A)}\|_2^2\leq\|b_{\mathcal{R}(A)}\|_2^2
\end{equation}
holds for $l \geq 0$.
Define $k_p:=\left \lfloor k/2 \right \rfloor$ and $k_q:=k-k_p$, with $k$ being a nonnegative integer. It follows from Eqs.\eqref{3-25} and \eqref{3-26} that
\begin{equation}\label{3-27}
\begin{aligned}
\mathbb{E}\|x_{k_p}-x_{\star}\|_2^2&\leq \hat{\alpha}\mathbb{E}\|x_{k_p-1}-x_{\star}\|_2^2+ \frac{1}{t}\mathbb{E}\|b_{\mathcal{R}(A)^{\bot}}-z_{k_p-1}\|_2^2   \\
&\leq \hat{\alpha}\mathbb{E}\|x_{k_p-1}-x_{\star}\|_2^2+ \frac{\|b_{\mathcal{R}(A)}\|_2^2}{t}\\
&\leq \hat{\alpha}^{k_p}\|x_{0}-x_{\star}\|_2^2+\sum_{l=0}^{k_p-1}\hat{\alpha}^{l}\frac{\|b_{\mathcal{R}(A)}\|_2^2}{t}\\
&\leq\|x_{\star}\|_2^2+\sum_{l=0}^{\infty }\hat{\alpha}^{l}\frac{\|b_{\mathcal{R}(A)}\|_2^2}{t\|_2^2}\\
&=\|x_{\star}\|_2^2+\frac{\tau_{max}}{\lambda_{min}(A^*A)}\frac{\|b_{\mathcal{R}(A)}\|_2^2}{t}.\\
\end{aligned}
\end{equation}
Therefore,
\begin{equation}\label{3-28}
\mathbb{E}\|z_{l+k_p}-b_{\mathcal{R}(A)^{\bot}}\|_2^2\leq \hat{\beta}^{l+k_p}\|b_{\mathcal{R}(A)}\|_2^2\leq\hat{\beta}^{k_p}\|b_{\mathcal{R}(A)}\|_2^2 .
\end{equation}
Consequently, by Eqs.\eqref{3-27} and \eqref{3-28}, it follows that
$$
\begin{aligned}
\mathbb{E}\|x_{k}-x_{\star}\|_2^2&\leq \hat{\alpha}^{k_q}
\mathbb{E}\|x_{k_p}-x_{\star}\|_2^2+\hat{\beta}^{k_p}\sum_{l=0}^{k_q-1}\hat{\alpha}^{l}\frac{\|b_{\mathcal{R}(A)}\|_2^2}{t}\\
&\leq \hat{\alpha}^{k_q}\left(\|x_{\star}\|_2^2+\frac{\tau_{max}}{\lambda_{min}(A^*A)}\frac{\|b_{\mathcal{R}(A)}\|_2^2}{t}\right)+\hat{\beta}^{k_p}\frac{\tau_{max}}{\lambda_{min}(A^*A)}\frac{\|b_{\mathcal{R}(A)}\|_2^2}{t}\\
&\leq \hat{\alpha}^{k_q}\|x_{\star}\|_2^2+(\hat{\alpha}^{k_q}+\hat{\beta}^{k_p})\frac{\tau_{max}}{\lambda_{min}(A^*A)}\frac{\|b_{\mathcal{R}(A)}\|_2^2}{t}\\
&\leq\hat{\alpha}^{k_q}\|x_{\star}\|_2^2+(\hat{\alpha}^{k_q}+\hat{\beta}^{k_p})\frac{\tau_{max}}{\lambda_{min}(A^*A)}\frac{\lambda_{min}(A^*A)}{t}\|x_{\star}\|_2^2\\
&\leq(\max\{\hat{\alpha},\hat{\beta}\})^{k_p}\left(1+2\frac{\tau_{max}}{t}\right)\|x_{\star}\|_2^2.
\end{aligned}
$$
\end{proof}
\subsection{Two-dimensional randomized extended Kaczmarz method with simple random sampling}\label{subsection-3-2}
The two-subspace randomized extended Kaczmarz \cite{WWT} is an excellent solver for least-squares problems. As stated earlier, each iteration of the Kaczmarz method is a typical one-dimensional projection method. Note that an approximate solution given by a suitable two-dimensional subspace may converge faster than one generated by the subspace $\mathcal K_{0}$. Let $\mathcal{K}_1= \mathrm{span}\left \{(A^{(i_{k_1})})^*, (A^{(i_{k_2})})^* \right \}$, $\mathcal{K}_2= \mathrm{span}\left \{A_{(j_{k_1})}, A_{(j_{k_2})} \right \}$ and $\mathcal{L}_1=\mathrm{span}\left \{e _{i_{k_1}}, e  _{i_{k_2}}  \right \}$, $\mathcal{L}_2=\mathrm{span}\left \{e _{j_{k_1}}, e  _{j_{k_2}}  \right \}$ be two search spaces and two constraint spaces, respectively, where $e _{i_{k_1}}, e _{i_{k_2}} $ and $e _{j_{k_1}}, e _{j_{k_2}} $ are the $i_{k_1}$-th, $i_{k_2}$-th and $j_{k_1}$-th, $j_{k_2}$-th standard basis vectors, respectively.  We find that the approximate solutions $x_{k+1}$ and $z_{k+1}$ yield the following conditions
\begin{equation}\label{3-29}
\mbox{Find}\ x_{k+1}\in  x_{k}+\mathcal K_{1},\ \mbox{such that}\ b-{A}x_{k+1} \perp \mathcal L_{1},
\end{equation}
\begin{equation}\label{3-30}
\mbox{Find}\ z_{k+1}\in  z_{k}+\mathcal K_{2},\ \mbox{such that}\ {A}^*z_{k+1} \perp \mathcal L_{2}. 
\end{equation}
This shows that $x_{k+1}$ and $z_{k+1}$ can be expressed as
\begin{equation}\label{3-31}
x_{k+1} =x_{k} + \gamma_k(A^{(i_{k_1})})^{*} + \lambda_k(A^{(i_{k_2})})^{*},
\end{equation}
\begin{equation}\label{3-32}
z_{k+1} =z_{k} + \tilde{\gamma}_kA_{(i_{k_1})} + \tilde{\lambda}_kA_{(i_{k_2})},
\end{equation}
respectively,
where $\gamma_k$, $\tilde{\gamma}_k$, $\lambda_k$, and $\tilde{\lambda}_k$ are scalars. 
Here, if $A^{(i_{k_1})}$ is parallel to $A^{(i_{k_2})}$, then Eq.\eqref{3-31} reduces to
$$
x_{k+1} =x_k+\frac{{b}^{(i_{k_1})}-A^{(i_{k_1})}x_{k}} {\| A^{(i_{k_1})}\|_{2} ^{2} }(A^{(i_{k_1})})^{*},
$$
otherwise, it follows from Eq.\eqref{3-29} that
$$\left\{ \begin{array}{l}
 \langle 
b-Ax_{k+1} ,e _{i_{k_1}}  \rangle =0,\\
 \langle 
b-Ax_{k+1} ,e _{i_{k_2}}   \rangle =0,
 \end{array}
\right.$$
that is,
$$
\left\{ \begin{array}{l}
b _{i_{k_1}}-A^{(i_{k_1})}x_k-A^{(i_{k_1})}(A^{(i_{k_1})})^*\gamma_k-A^{(i_{k_1})}(A^{(i_{k_2})})^*\lambda_k=0,\\
b _{i_{k_2}}-A^{(i_{k_2})}x_k-A^{(i_{k_2})}(A^{(i_{k_1})})^*\gamma_k-A^{(i_{k_2})}(A^{(i_{k_2})})^*\lambda_k=0.
\end{array}
\right.$$
Therefore, the parameters $\gamma_k$ and $\lambda_k$ 
are given by
$$\begin{aligned}
\gamma_k&=\frac{\|A^{(i_{k_2})}\|_2^2({b}^{(i_{k_1})}-A^{(i_{k_1})}x_k)-A^{(i_{k_1})}(A^{(i_{k_2})})^*({b}^{(i_{k_2})}-A^{(i_{k_2})}x_k)}{\|A^{(i_{k_1})}\|_2^2\|A^{(i_{k_2})}\|_2^2-|A^{(i_{k_1})}(A^{(i_{k_2})})^*|^2},\\
\lambda_k&=\frac{\|A^{(i_{k_1})}\|_2^2({b}^{(i_{k_2})}-A^{(i_{k_2})}x_k)-A^{(i_{k_2})}(A^{(i_{k_1})})^*({b}^{(i_{k_1})}-A^{(i_{k_1})}x_k)}{\|A^{(i_{k_1})}\|_2^2\|A^{(i_{k_2})}\|_2^2-|A^{(i_{k_1})}(A^{(i_{k_2})})^*|^2}.
\end{aligned}$$
Similarly, if $A_{(j_{k_1})}$ is parallel to $A_{(j_{k_2})}$, then Eq.\eqref{3-32} reduces to
$$
z_{k+1} =(I_m-\frac{A_{(j_{k_1})}A_{(j_{k_1})}^*}{\|A_{(j_{k_1})}\|_2^2})z_{k},
$$
otherwise, 
$$\begin{aligned}
\tilde{\gamma}_k&=\frac{A_{(j_{k_1})}^*A_{(j_{k_2})}A_{(j_{k_2})}^*z_k-\|A_{(j_{k_2})}\|_2^2A_{(j_{k_1})}^*z_k}{\|A_{(j_{k_1})}\|_2^2\|A_{(j_{k_2})}\|_2^2-|A_{(j_{k_1})}^*A_{(j_{k_2})}|^2},\\
\tilde{\lambda}_k&=\frac{A_{(j_{k_2})}^*A_{(j_{k_1})}A_{(j_{k_1})}^*z_k-\|A_{(j_{k_1})}\|_2^2A_{(j_{k_2})}^*z_k}{\|A_{(j_{k_1})}\|_2^2\|A_{(j_{k_2})}\|_2^2-|A_{(j_{k_1})}^*A_{(j_{k_2})}|^2}.
\end{aligned}$$

    Indeed, TREK may be time-consuming for big data problems, since we have to compute the probability of the whole data, such that select two active rows and columns at each iteration. To address this issue, we performed the TREK with simple random sampling to save storage and computational operations. As the simplest sampling method, simple random sampling \cite{Olken} only requires a portion of the advanced knowledge with a single random selection. By this selection method, all the individuals have an equal probability of being selected, which ensures the unbiased of the dataset. Hence, we take a small portion of rows and columns as samples and then select working rows and columns from the obtained samples. The main advantage is that we only need to compute the probability corresponding to the selected simple sampling set, rather than the whole dataset. This idea stems from Chebyshev's (weak) law of large numbers.

\begin{theorem}\cite{Carlton}\label{The-5}
\rm Denote by $\{z_1,\cdots,z_n,\cdots\}$, a series of random variables, by $\mathbb{E}(z_k)$, the expectation, and by $\mathbb{D}(z_k)$, the variance of $z_k$. Then, when $\frac{1}{n^2}\sum_{k=1}^{n}\mathbb{D}(z_k)$\\$\rightarrow 0$, the following relation 
\begin{equation}\label{3-33}
\lim_{n \to \infty} P\left \{ \left|\frac{1}{n}\sum _{k=1}^{n}z_k-\frac{1}{n}\sum _{k=1}^{n}\mathbb{E} (z_k) \right|<\varepsilon \right \} =1    
\end{equation}
holds for any small positive number $\varepsilon$.
\end{theorem}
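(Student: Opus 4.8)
The plan is to obtain \eqref{3-33} as essentially a one-line consequence of Chebyshev's inequality applied to the sample mean $\bar{Z}_n := \frac{1}{n}\sum_{k=1}^{n} z_k$. First I would record that, by linearity of expectation, $\mathbb{E}(\bar{Z}_n) = \frac{1}{n}\sum_{k=1}^{n}\mathbb{E}(z_k)$, so the event whose probability appears in \eqref{3-33} is exactly $\{\,|\bar{Z}_n - \mathbb{E}(\bar{Z}_n)| < \varepsilon\,\}$. Hence it suffices to show that $P\{\,|\bar{Z}_n - \mathbb{E}(\bar{Z}_n)| \ge \varepsilon\,\}\to 0$ as $n\to\infty$ for each fixed $\varepsilon>0$.

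Second, I would compute $\mathbb{D}(\bar{Z}_n)$. Since the $z_k$ are (pairwise) uncorrelated --- the hypothesis implicit in Chebyshev's theorem and guaranteed in our setting because the simple random samples are drawn independently --- all cross-covariance terms vanish and $\mathbb{D}(\bar{Z}_n) = \frac{1}{n^{2}}\sum_{k=1}^{n}\mathbb{D}(z_k)$. Then Chebyshev's inequality yields, for every $\varepsilon>0$,
\[
P\left\{|\bar{Z}_n - \mathbb{E}(\bar{Z}_n)| \ge \varepsilon\right\} \le \frac{\mathbb{D}(\bar{Z}_n)}{\varepsilon^{2}} = \frac{1}{\varepsilon^{2}n^{2}}\sum_{k=1}^{n}\mathbb{D}(z_k).
\]

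Finally, letting $n\to\infty$ and using the assumption $\frac{1}{n^{2}}\sum_{k=1}^{n}\mathbb{D}(z_k)\to 0$, the right-hand side tends to $0$, so $P\{\,|\bar{Z}_n - \mathbb{E}(\bar{Z}_n)| \ge \varepsilon\,\}\to 0$; since the complementary probability lies in $[0,1]$, a squeeze forces $P\{\,|\bar{Z}_n - \mathbb{E}(\bar{Z}_n)| < \varepsilon\,\}\to 1$, which is precisely \eqref{3-33}. There is no real obstacle here: the argument is entirely routine once Chebyshev's inequality is available. The one point deserving care is the variance identity $\mathbb{D}\big(\sum_{k}z_k\big)=\sum_{k}\mathbb{D}(z_k)$, which is where the (pairwise) independence of the sampled variables enters; without it the stated conclusion can fail (e.g.\ if all the $z_k$ coincide), so it should be taken as part of the hypotheses inherited from \cite{Carlton}.
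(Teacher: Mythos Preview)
Your argument is the standard textbook proof of Chebyshev's weak law of large numbers and is correct, including your observation that the variance identity $\mathbb{D}\!\left(\sum_k z_k\right)=\sum_k \mathbb{D}(z_k)$ requires pairwise uncorrelatedness as an implicit hypothesis. However, the paper does not supply its own proof of this theorem at all: Theorem~\ref{The-5} is simply quoted from~\cite{Carlton} as a classical result and used as a black box to motivate the simple random sampling strategy. So there is nothing to compare against---your derivation is a welcome self-contained justification of a result the authors chose to cite rather than prove.
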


Observe from Theorem \ref{The-5} that, if the sample size is large enough, then the sample mean will be close to the population mean, i.e., a small part estimating the whole is reasonable. Putting this fact together with TREK, a two-dimensional randomized extended Kaczmarz method with simple random sampling for Eq.\eqref{1-1}, is as follows.

\vspace{1em}
\begin{breakablealgorithm}
	\caption{Two-dimensional randomized extended Kaczmarz method with simple random sampling}
	\label{algo-3-5}
	\begin{algorithmic}[1]
        \REQUIRE $A, b,z_0= b$\\
        \ENSURE $x_{k+1}$, $z_{k+1}$\\
        \STATE
       For $k=0, 1, 2,\cdots $, until convergence, do:\\
            \STATE
            Generate two indicator sets $\Gamma_{k}$, $\tilde{\Gamma}_{k}$ i.e., choosing $l m$ rows and $ln$ columns of $A$ by using the simple random sampling, where $0<l<1$.
            \STATE Select $i_{k_1}\in\Gamma_k $ with probability Pr(row=$i_{k_1}$)=$\frac{\|A^{(i_{k_1})}\|_2^2}{\sum_{i\in\Gamma_k}\|A^{(i)}\|_2^2}$.
            \STATE Select $i_{k_2}\in\Gamma_k $ with probability Pr(row=$i_{k_2}$)=$\frac{\|A^{(i_{k_2})}\|_2^2}{\sum_{j\in\Gamma_k}\|A^{(j)}\|_2^2}$.

            \STATE Select $j_{k_1}\in\tilde{\Gamma}_{k} $ with probability Pr(column=$j_{k_1}$)=$\frac{\|A_{(j_{k_1})}\|_2^2}{\sum_{i\in\tilde{\Gamma}_{k}}\|A_{(i)}\|_2^2}$.
            
            \STATE Select $j_{k_2}\in\tilde{\Gamma}_{k} $ with probability Pr(column=$j_{k_2}$)=$\frac{\|A_{(j_{k_2})}\|_2^2}{\sum_{j\in\tilde{\Gamma}_{k}}\|A_{(j)}\|_2^2}$.
            
 \STATE  If $A^{(i_{k_1})}$ is parallel to $A^{(i_{k_2})}$, then we compute
$$
x_{k+1} =x_k+\frac{{b}^{(i_{k_1})}-z_k^{(i_{k_1})}-A^{(i_{k_1})}x_{k}} {\| A^{(i_{k_1})}\|_{2} ^{2} }(A^{(i_{k_1})})^{*},
$$
otherwise, compute
$$\begin{aligned}
\gamma_k&=\frac{\|A^{(i_{k_2})}\|_2^2r_k^{(i_{k_1})}-A^{(i_{k_1})}(A^{(i_{k_2})})^*r_k^{(i_{k_2})}}{\|A^{(i_{k_1})}\|_2^2\|A^{(i_{k_2})}\|_2^2-|A^{(i_{k_1})}(A^{(i_{k_2})})^*|^2},\\
\lambda_k&=\frac{\|A^{(i_{k_1})}\|_2^2r_k^{(i_{k_2})}-A^{(i_{k_2})}(A^{(i_{k_1})})^*r_k^{(i_{k_1})}}{\|A^{(i_{k_1})}\|_2^2\|A^{(i_{k_2})}\|_2^2-|A^{(i_{k_1})}(A^{(i_{k_2})})^*|^2},
\end{aligned}$$
with $r_k^{(i_{k_s})}={b}^{(i_{k_s})}-z_{k}^{(i_{k_s})}-A^{(i_{k_s})}x_k$, $s=1,2.$
 Update $x_{k+1} =x_{k} + \gamma_k(A^{(i_{k_1})})^{*} + \lambda_k(A^{(i_{k_2})}) ^{*}$.
  \STATE  If $A_{(j_{k_1})}$ is parallel to $A_{(j_{k_2})}$, then we compute
$$
z_{k+1} =(I_m-\frac{A_{(j_{k_1})}A_{(j_{k_1})}^*}{\|A_{(j_{k_1})}\|_2^2})z_{k},
$$
otherwise, compute
$$\begin{aligned}
\tilde{\gamma}_k&=\frac{A_{(j_{k_1})}^*A_{(j_{k_2})}A_{(j_{k_2})}^*z_k-\|A_{(j_{k_2})}\|_2^2A_{(j_{k_1})}^*z_k}{\|A_{(j_{k_1})}\|_2^2\|A_{(j_{k_2})}\|_2^2-|A_{(j_{k_1})}^*A_{(j_{k_2})}|^2},\\
\tilde{\lambda}_k&=\frac{A_{(j_{k_2})}^*A_{(j_{k_1})}A_{(j_{k_1})}^*z_k-\|A_{(j_{k_1})}\|_2^2A_{(j_{k_2})}^*z_k}{\|A_{(j_{k_1})}\|_2^2\|A_{(j_{k_2})}\|_2^2-|A_{(j_{k_1})}^*A_{(j_{k_2})}|^2}.
\end{aligned}$$
 Update $z_{k+1} =z_{k} + \tilde{\gamma}_kA_{(j_{k_1})} + \tilde{\lambda}_kA_{(j_{k_2})}$.
	\end{algorithmic}  
\end{breakablealgorithm}
\vspace{1em}

Actually, if the linear system is consistent, then $z_k$ is not required, such that Algorithm \ref{algo-3-5} will reduce to the following two-dimensional randomized Kaczmarz method with simple random sampling (TRKS).
\vspace{1em}
\begin{breakablealgorithm}
	\caption{Two-dimensional randomized Kaczmarz method with simple random sampling}
	\label{algo-3-6}
	\begin{algorithmic}[1]
        \REQUIRE $A, b$\\
        \ENSURE $x_{k+1}$\\
        \STATE
       For $k=0, 1, 2,\cdots $, until convergence, do:\\
            \STATE
            Generate an indicator set $\Gamma_k$, i.e., choosing $l m$ rows of $A$ by using the simple random sampling, where $0<l<1$.
            \STATE Select $i_{k_1}\in\Gamma_k $ with probability Pr(row=$i_{k_1}$)=$\frac{\|A^{(i_{k_1})}\|_2^2}{\sum_{i\in\Gamma_k}\|A^{(i)}\|_2^2}$.
            \STATE Select $i_{k_2}\in\Gamma_k $ with probability Pr(row=$i_{k_2}$)=$\frac{\|A^{(i_{k_2})}\|_2^2}{\sum_{j\in\Gamma_k}\|A^{(j)}\|_2^2}$.
        
 \STATE  If $A^{(i_{k_1})}$ is parallel to $A^{(i_{k_2})}$, then we compute
$$
x_{k+1} =x_k+\frac{{b}^{(i_{k})}-A^{(i_{k})}x_{k}} {\| A^{(i_{k})}\|_{2} ^{2} }(A^{(i_{k})})^{*},
$$
otherwise, compute
$$\begin{aligned}
\gamma_k&=\frac{\|A^{(i_{k_2})}\|_2^2({b}^{(i_{k_1})}-A^{(i_{k_1})}x_k)-A^{(i_{k_1})}(A^{(i_{k_2})})^*({b}^{(i_{k_2})}-A^{(i_{k_2})}x_k)}{\|A^{(i_{k_1})}\|_2^2\|A^{(i_{k_2})}\|_2^2-|A^{(i_{k_1})}(A^{(i_{k_2})})^*|^2},\\
\lambda_k&=\frac{\|A^{(i_{k_1})}\|_2^2({b}^{(i_{k_2})}-A^{(i_{k_2})}x_k)-A^{(i_{k_2})}(A^{(i_{k_1})})^*({b}^{(i_{k_1})}-A^{(i_{k_1})}x_k)}{\|A^{(i_{k_1})}\|_2^2\|A^{(i_{k_2})}\|_2^2-|A^{(i_{k_1})}(A^{(i_{k_2})})^*|^2}.
\end{aligned}$$
 Update $x_{k+1} =x_{k} + \gamma_k(A^{(i_{k_1})})^{*} + \lambda_k(A^{(i_{k_2})}) ^{*}$.

	\end{algorithmic}  
\end{breakablealgorithm}
\vspace{1em}

To analyze the convergence of TREKS, we first provide the following lemma, and refer to Appendix \ref{App-1} for its proof.

\begin{lemma}\label{lem-1}
{\rm Let $x_{\star}=A^{\dagger}b$ be the least-norm solution of consistent linear systems $Ax=b$. Then the iterative sequence $\{x_{k}\}$ generated by Algorithm \ref{algo-3-6} converges to $x_{\star }$ for any initial vector $x_0$ in expectation. Moreover, the corresponding error norm in expectation yields
\begin{equation}\label{3-34}
\begin{aligned}
\mathbb{E}_k\|x_{k+1}-x_{\star}\|_2^2&\leq\left[\left(1-\frac{1-\varepsilon_k}{1+\tilde{\varepsilon}_k}\frac{\lambda_{min}(A^*A)}{\tau_{max}}\right)\left(1-\frac{1-\varepsilon_{k_1}}{1+\tilde{\varepsilon}_{k_1}}\frac{\lambda_{min}(A^*A)}{\|A\|_F^2}\right)\right.\\
&\quad\left.-\frac{1-\varepsilon_{k_2}}{1+\tilde{\varepsilon}_{k_2}}\frac{D\lambda_{min}(A^*A)}{\|A\|_F^2}\frac{\tau_{min}}{\tau_{max}}\right]\|x_k-x_{\star}\|_2^2.
\end{aligned}
\end{equation}   
where $D=\min\left\{\frac{\delta^2(1-\delta)}{1+\delta},\frac{\Delta^2(1-\Delta)}{1+\Delta}\right\}$ with $\delta$ and $\Delta$ being defined as in \eqref{2-2}, and $\tau_{min}=\|A\|_F^2-\max\limits_{i\in[m]}\|A^{(i)}\|_2^2$ , $\tau_{max}=\|A\|_F^2-\min\limits_{i\in[m]}\|A^{(i)}\|_2^2$}.
\end{lemma}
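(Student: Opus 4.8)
The plan is to follow the template of the proof of Theorem~\ref{The-2}, replacing the single Kaczmarz step by the two-dimensional projection underlying Algorithm~\ref{algo-3-6}, and then to convert exact row norms and residual masses into the sampled ones via Chebyshev's law of large numbers (Theorem~\ref{The-5}). Since $Ax=b$ is consistent, $x_\star$ lies on both hyperplanes $\mathcal H_{i_{k_1}}=\{x:A^{(i_{k_1})}x=b^{(i_{k_1})}\}$ and $\mathcal H_{i_{k_2}}$, so by the Petrov--Galerkin conditions~\eqref{3-29} the iterate $x_{k+1}$ is precisely the orthogonal projection of $x_k$ onto $\mathcal H_{i_{k_1}}\cap\mathcal H_{i_{k_2}}$ (onto $\mathcal H_{i_{k_1}}$ alone when the two rows are parallel). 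Hence $x_k-x_{k+1}\in\mathcal K_1$ while $x_{k+1}-x_\star\perp\mathcal K_1$, and Pythagoras gives $\|x_{k+1}-x_\star\|_2^2=\|x_k-x_\star\|_2^2-\|P_k(x_k-x_\star)\|_2^2$ with $P_k$ the orthoprojector onto $\mathcal K_1$. With $r_k=b-Ax_k$ (so $r_k^{(i)}=A^{(i)}(x_\star-x_k)$), $\mu_k=\frac{|A^{(i_{k_1})}(A^{(i_{k_2})})^*|}{\|A^{(i_{k_1})}\|_2\|A^{(i_{k_2})}\|_2}\in[\delta,\Delta]$ and $\zeta_k=\frac{A^{(i_{k_2})}(A^{(i_{k_1})})^*}{\|A^{(i_{k_1})}\|_2\|A^{(i_{k_2})}\|_2}$ (so $|\zeta_k|=\mu_k$), inverting the $2\times2$ Gram matrix of the two working rows yields the two-subspace identity
$$
\|P_k(x_k-x_\star)\|_2^2=\frac{|r_k^{(i_{k_1})}|^2}{\|A^{(i_{k_1})}\|_2^2}+\frac{1}{1-\mu_k^2}\left|\frac{r_k^{(i_{k_2})}}{\|A^{(i_{k_2})}\|_2}-\zeta_k\frac{r_k^{(i_{k_1})}}{\|A^{(i_{k_1})}\|_2}\right|^2 ,
$$
whose first summand equals $\|x_k-x_\star\|_2^2-\|y_k-x_\star\|_2^2$ for the intermediate Kaczmarz iterate $y_k$ on row $i_{k_1}$, and whose second summand equals $\|y_k-x_\star\|_2^2-\|x_{k+1}-x_\star\|_2^2=|A^{(i_{k_2})}(y_k-x_\star)|^2/(\|A^{(i_{k_2})}\|_2^2(1-\mu_k^2))$ and vanishes when $i_{k_2}=i_{k_1}$ (recall $A^{(i_{k_1})}(y_k-x_\star)=0$).

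I would then take $\mathbb E_k$ in stages: conditionally on the sample set $\Gamma_k$, first over $i_{k_2}$ with $i_{k_1}$ frozen, then over $i_{k_1}$; each index is drawn from $\Gamma_k$ with probability proportional to its squared row norm. The expectation of the first summand equals $(\sum_{i\in\Gamma_k}|r_k^{(i)}|^2)/(\sum_{i\in\Gamma_k}\|A^{(i)}\|_2^2)$; bounding $\sum_{i\in\Gamma_k}|r_k^{(i)}|^2\ge(1-\varepsilon_{k_1})\,l\,\|r_k\|_2^2$ and $\sum_{i\in\Gamma_k}\|A^{(i)}\|_2^2\le(1+\tilde\varepsilon_{k_1})\,l\,\|A\|_F^2$ — the smallness of such deviation parameters for large samples being exactly the content of Theorem~\ref{The-5}, with the quantitative estimates deferred to Appendix~\ref{App-1} — and combining with $\|r_k\|_2^2=\|A(x_k-x_\star)\|_2^2\ge\lambda_{min}(A^*A)\|x_k-x_\star\|_2^2$ (which holds on $\mathcal R(A^*)$, to which the iterates belong, so the nonzero smallest eigenvalue applies) produces the factor $\frac{1-\varepsilon_{k_1}}{1+\tilde\varepsilon_{k_1}}\frac{\lambda_{min}(A^*A)}{\|A\|_F^2}$.

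For the second summand I would estimate its conditional expectation over $i_{k_2}$ from below, retaining the factor $1/(1-\mu_k^2)$: by the reverse triangle inequality the modulus squared is at least $\big(\frac{|r_k^{(i_{k_2})}|}{\|A^{(i_{k_2})}\|_2}-\mu_k\frac{|r_k^{(i_{k_1})}|}{\|A^{(i_{k_1})}\|_2}\big)^2$, from which, after splitting off the portion already accounted for by the first summand, one is left with a genuinely two-dimensional remainder that is lower bounded using the elementary fact that $\mu\mapsto\frac{\mu^2(1-\mu)}{1+\mu}$ is unimodal on $[0,1]$ and vanishes at both endpoints, so that its minimum over $[\delta,\Delta]$ is exactly $D=\min\{\frac{\delta^2(1-\delta)}{1+\delta},\frac{\Delta^2(1-\Delta)}{1+\Delta}\}$. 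Restricting the sum over $i_{k_2}$ to $i_{k_2}\ne i_{k_1}$ (the omitted term being zero), the same sampling and Rayleigh-quotient estimates again turn the residual and norm masses into $\frac{\lambda_{min}(A^*A)}{\|A\|_F^2}$ dressed with $\varepsilon_{k_2},\tilde\varepsilon_{k_2}$ and $\varepsilon_k,\tilde\varepsilon_k$, while accounting for the exclusion of $i_{k_1}$ from the relevant norm mass — which replaces $\|A\|_F^2$ by (sampled analogues of) $\tau_{max}$ and $\tau_{min}$ — supplies the remaining factors $1-\frac{1-\varepsilon_k}{1+\tilde\varepsilon_k}\frac{\lambda_{min}(A^*A)}{\tau_{max}}$ and $\frac{\tau_{min}}{\tau_{max}}$. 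Collecting the three contributions, subtracting from $\|x_k-x_\star\|_2^2$, and invoking the law of iterated expectations gives~\eqref{3-34}; the stated convergence then follows because the bracketed coefficient is $<1$ once the sample is large enough to make $\varepsilon_\bullet,\tilde\varepsilon_\bullet$ small.

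The hard part will be the second summand: the quantity $\big|\frac{r_k^{(i_{k_2})}}{\|A^{(i_{k_2})}\|_2}-\zeta_k\frac{r_k^{(i_{k_1})}}{\|A^{(i_{k_1})}\|_2}\big|^2/(1-\mu_k^2)$ couples the two random rows through both their residual entries and their pairwise coherence, so its conditional expectation does not factor, and for a fixed pair it may vanish — there is no pointwise positive lower bound. The delicate step is to extract from it, in expectation and uniformly over realizations, a contribution of the precise shape $D\,\frac{\tau_{min}}{\tau_{max}}\,\frac{\lambda_{min}(A^*A)}{\|A\|_F^2}\|x_k-x_\star\|_2^2$ (up to the sampling factors); this is exactly what forces the coherence parameters $\delta,\Delta$ (through $D$) and the row-norm spread (through $\tau_{min}/\tau_{max}$) into the bound, and it must be carried out while keeping $\varepsilon_\bullet,\tilde\varepsilon_\bullet$ as honest explicit factors rather than absorbing them. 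Everything else is Pythagoras, $2\times2$ linear algebra, the Rayleigh quotient, and Chebyshev's inequality.
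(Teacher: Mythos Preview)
Your plan is essentially the paper's proof (Appendix~\ref{App-1}): the same Pythagorean decomposition via the orthogonal projector onto $\mathcal K_1$, the same intermediate Kaczmarz iterate $y_k=\tilde y_k$ giving the product $(1-\cdot)(1-\cdot)$ through two successive one-dimensional steps, and the same Chebyshev replacements~(A.4)--(A.9) producing the $\varepsilon_\bullet,\tilde\varepsilon_\bullet$ factors.

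The one place your sketch is vague is precisely the ``hard part'' you flag. The reverse triangle inequality alone will not yield the $D$ term: applied pointwise to the cross quantity $\frac{|\mu_k|^2}{1-|\mu_k|^2}\bigl|\phi_{i_{k_2}}-\zeta_k\phi_{i_{k_1}}\bigr|^2$ it gives a bound that is asymmetric in the two indices and can vanish. What the paper actually does (Eq.~\eqref{6-10}) is a \emph{pair symmetrization}: in the double expectation over $(i_{k_1},i_{k_2})$ one sums over ordered pairs, groups $(p,q)$ with $(q,p)$, and invokes the elementary inequality
\[
|\theta a-\varsigma b|^2+|\theta b-\bar\varsigma a|^2\;\ge\;(|\theta|-|\varsigma|)^2\bigl(|a|^2+|b|^2\bigr)
\]
with $\theta=\frac{|\mu_k|^2}{\|u_k\|_2}$, $\varsigma=\frac{\bar\mu_k}{\|u_k\|_2}$, so that $(|\theta|-|\varsigma|)^2=\frac{|\mu_k|^2(1-|\mu_k|)}{1+|\mu_k|}\ge D$. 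This symmetrization is the mechanism by which $D$ enters, and it is also what turns the remaining row-norm sums into $\tau_{min}$ in the numerator and $\tau_{max}$ in the denominator. Your outline implicitly needs this step (indeed, applying reverse triangle first and \emph{then} symmetrizing over the pair gives the same function of $|\mu_k|$), but you should state it explicitly rather than leaving it under ``splitting off''.
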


From Lemma \ref{lem-1}, we give the following theorem for guaranteeing the convergence of Algorithm TREKS.
\begin{theorem}\label{The-6}
{\rm Let $x_{\star}=A^{\dagger}b$ be the least-norm solution of the least-squares
problem \eqref{1-1}. Then the iterative sequence $\{x_{k}\}$ generated by Algorithm \ref{algo-3-5} converges to $x_{\star }$ for any initial vector $x_0$ in expectation. Moreover, the corresponding error norm in expectation yields
\begin{equation}\label{3-35}
\mathbb{E}\|x_{k}-x_{\star}\|_2^2\leq(\max\{\tilde{\alpha},\tilde{\beta}\})^{\left \lfloor \frac{k}{2} \right \rfloor }\left(1+\frac{1+\hat{\varepsilon }_k}{1-\check{\varepsilon }_k }\frac{4}{(1-\triangle)(1-\tilde{\alpha})}\frac{\tau_{max}}{\tau_{min}}\frac{\lambda_{min}(A^*A)}{\|A\|_F^2}\right)\|x_{\star}\|_2^2,
\end{equation}
where 
$$
\tilde{\alpha}=\left(1-\frac{1-\varepsilon_k}{1+\tilde{\varepsilon}_k}\frac{\lambda_{min}(A^*A)}{\tau_{max}}\right)\left(1-\frac{1-\varepsilon_{k_1}}{1+\tilde{\varepsilon}_{k_1}}\frac{\lambda_{min}(A^*A)}{\|A\|_F^2}\right)-\frac{1-\varepsilon_{k_2}}{1+\tilde{\varepsilon}_{k_2}}\frac{D\lambda_{min}(A^*A)\tau_{min}}{\|A\|_F^2\tau_{max}}$$
and
$$
\tilde{\beta}=\left(1-\frac{1-\overline{\varepsilon_k}}{1+\overline{\tilde{\varepsilon}_k }}\frac{\lambda_{min}(A^*A)}{\tilde{\tau}_{max}}\right)\left(1-\frac{1-\overline{\varepsilon_{k_1}}}{1+\overline{\tilde{\varepsilon}_{k_1} }}\frac{\lambda_{min}(A^*A)}{\|A\|_F^2}\right)-\frac{1-\overline{\varepsilon_{k_2}}}{1+\overline{\tilde{\varepsilon }_{k_2}}}\frac{\tilde{D}\lambda_{min}(A^*A)\tilde{\tau}_{min}}{\|A\|_F^2\tilde{\tau}_{max}}$$}
\end{theorem}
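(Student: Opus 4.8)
The plan is to mirror the two-stage argument used for Theorems~\ref{The-2} and \ref{The-4}: treat the $x$-iteration of Algorithm~\ref{algo-3-5} as a ``noise-free'' two-dimensional randomized Kaczmarz step perturbed by the residual $b_{\mathcal{R}(A)^{\bot}}-z_k$, and couple it with the convergence of the $z$-iteration toward $b_{\mathcal{R}(A)^{\bot}}$. First I would introduce the auxiliary iterate $\tilde{x}_{k+1}$ obtained by applying the two-dimensional Kaczmarz update of Algorithm~\ref{algo-3-6} to the \emph{consistent} system $Ax=b_{\mathcal{R}(A)}$ starting from $x_k$, with the same sampled indices $i_{k_1},i_{k_2}$. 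Since $b=b_{\mathcal{R}(A)}+b_{\mathcal{R}(A)^{\bot}}$, the only difference between $x_{k+1}$ and $\tilde{x}_{k+1}$ is that the right-hand side of the $2\times2$ Gram solve for $(\gamma_k,\lambda_k)$ is shifted by $w^{(i_{k_s})}=b_{\mathcal{R}(A)^{\bot}}^{(i_{k_s})}-z_k^{(i_{k_s})}$ for $s=1,2$; hence $x_{k+1}-\tilde{x}_{k+1}\in\mathcal{K}_1$ and depends linearly on $(w^{(i_{k_1})},w^{(i_{k_2})})$. Because $A^{(i_{k_1})}(\tilde{x}_{k+1}-x_{\star})=A^{(i_{k_2})}(\tilde{x}_{k+1}-x_{\star})=0$ by construction, $\tilde{x}_{k+1}-x_{\star}$ is orthogonal to $\mathcal{K}_1$, so the Pythagorean identity
\[
\mathbb{E}_k\|x_{k+1}-x_{\star}\|_2^2=\mathbb{E}_k\|\tilde{x}_{k+1}-x_{\star}\|_2^2+\mathbb{E}_k\|x_{k+1}-\tilde{x}_{k+1}\|_2^2
\]
holds.

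For the first term I would invoke Lemma~\ref{lem-1}, which is precisely the one-step contraction of Algorithm~\ref{algo-3-6} on a consistent system; applied to $Ax=b_{\mathcal{R}(A)}$, whose least-norm solution is $A^{\dagger}b_{\mathcal{R}(A)}=x_{\star}$, it gives $\mathbb{E}_k\|\tilde{x}_{k+1}-x_{\star}\|_2^2\le\tilde{\alpha}\|x_k-x_{\star}\|_2^2$ with $\tilde{\alpha}$ the bracketed factor of \eqref{3-34}. For the second term I would write out the explicit solution of the $2\times2$ Gram system for $(\gamma_k,\lambda_k)$, handling the degenerate case $A^{(i_{k_1})}\parallel A^{(i_{k_2})}$ by the one-dimensional estimates of \eqref{3-14} and \eqref{3-24}, and in the nondegenerate case using the determinant bound $\|A^{(i_{k_1})}\|_2^2\|A^{(i_{k_2})}\|_2^2-|A^{(i_{k_1})}(A^{(i_{k_2})})^{*}|^2\ge(1-\triangle^2)\|A^{(i_{k_1})}\|_2^2\|A^{(i_{k_2})}\|_2^2$ together with the row-norm bound $t\le\|A^{(i)}\|_2^2$. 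After taking the conditional expectation over the two independent draws from the sampling set $\Gamma_k$ — whose probabilities deviate from the population ones only by the $1\pm\hat{\varepsilon}_k$ and $1\pm\check{\varepsilon}_k$ factors quantified through Theorem~\ref{The-5} — this yields $\mathbb{E}_k\|x_{k+1}-\tilde{x}_{k+1}\|_2^2\le C\,\|b_{\mathcal{R}(A)^{\bot}}-z_k\|_2^2$, where $C$ collects the factors $\frac{1+\hat{\varepsilon}_k}{1-\check{\varepsilon}_k}$, $\frac{1}{1-\triangle}$, $\frac{\tau_{max}}{\tau_{min}}$ and $\frac{1}{\|A\|_F^2}$ (up to a small absolute constant) which, combined with $\|b_{\mathcal{R}(A)}\|_2^2$, reproduce the constant of \eqref{3-35}. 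Adding the two pieces gives the recursion $\mathbb{E}_k\|x_{k+1}-x_{\star}\|_2^2\le\tilde{\alpha}\|x_k-x_{\star}\|_2^2+C\,\|b_{\mathcal{R}(A)^{\bot}}-z_k\|_2^2$.

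Next I would establish the companion bound $\mathbb{E}\|z_l-b_{\mathcal{R}(A)^{\bot}}\|_2^2\le\tilde{\beta}^{\,l}\|b_{\mathcal{R}(A)}\|_2^2$ for the $z$-iteration of Algorithm~\ref{algo-3-5}; this is Lemma~\ref{lem-1} transcribed to the homogeneous problem $A^{*}z=0$, with the columns of $A$ in the role of rows, $z_0=b$, and the iterates converging to $b_{\mathcal{R}(A)^{\bot}}$, and it supplies the factor $\tilde{\beta}$ with the tilded quantities $\tilde{\tau}_{min},\tilde{\tau}_{max},\tilde{D}$ and the barred $\varepsilon$'s. With both contraction estimates in hand I would split $k=k_p+k_q$ with $k_p=\lfloor k/2\rfloor$ exactly as in the proof of Theorem~\ref{The-2}: iterate the $x$-recursion over the first $k_p$ steps to bound $\mathbb{E}\|x_{k_p}-x_{\star}\|_2^2$ by $\|x_{\star}\|_2^2$ plus a convergent geometric tail $(\sum_{l\ge0}\tilde{\alpha}^{l})\,C\,\|b_{\mathcal{R}(A)}\|_2^2$; then iterate the remaining $k_q$ steps feeding in $\mathbb{E}\|z_{l+k_p}-b_{\mathcal{R}(A)^{\bot}}\|_2^2\le\tilde{\beta}^{\,k_p}\|b_{\mathcal{R}(A)}\|_2^2$; collect the resulting $\tilde{\alpha}^{k_q}$ and $\tilde{\beta}^{k_p}$ terms and bound each by $(\max\{\tilde{\alpha},\tilde{\beta}\})^{k_p}$; use $\sum_{l\ge0}\tilde{\alpha}^{l}=1/(1-\tilde{\alpha})$; and finally bound $\|b_{\mathcal{R}(A)}\|_2^2=\|Ax_{\star}\|_2^2$ by a multiple of $\|x_{\star}\|_2^2$ as in the proof of Theorem~\ref{The-2}, leading to \eqref{3-35}.

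The hard part will be the second step, the dimension-two perturbation estimate for $x_{k+1}-\tilde{x}_{k+1}$. In the one-dimensional setting of Theorems~\ref{The-2} and \ref{The-4} one has $x_{k+1}-\tilde{x}_{k+1}=\frac{w^{(i_k)}}{\|A^{(i_k)}\|_2^2}(A^{(i_k)})^{*}$ and the norm bound is immediate; here one must control the inverse of a possibly nearly-singular $2\times2$ Gram matrix — this is where $\frac{1}{1-\triangle}$ and $\frac{\tau_{max}}{\tau_{min}}$ enter — treat the parallel/nonparallel dichotomy separately, and carefully take the conditional expectation over the two draws from the simple-sampling sets $\Gamma_k$ and $\tilde{\Gamma}_k$, tracking how the sampling distortions supplied by Theorem~\ref{The-5} propagate into each of the $\varepsilon$-corrections so that the final constant matches \eqref{3-35}. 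Keeping this bookkeeping consistent — and in particular checking that $\tilde{\alpha},\tilde{\beta}\in(0,1)$ under the stated hypotheses, so that the geometric sums converge — is the principal technical burden of the proof.
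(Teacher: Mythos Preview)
Your proposal is correct and follows essentially the same route as the paper: the auxiliary iterate $\tilde{x}_{k+1}$, the Pythagorean split, Lemma~\ref{lem-1} for the contraction factor $\tilde{\alpha}$, the column analogue for $\tilde{\beta}$, the recursion $\mathbb{E}_k\|x_{k+1}-x_{\star}\|_2^2\le\tilde{\alpha}\|x_k-x_{\star}\|_2^2+C\|b_{\mathcal{R}(A)^{\bot}}-z_k\|_2^2$, and the $k_p/k_q$ two-stage summation are all exactly what the paper does.

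One small point on the perturbation term: the paper does \emph{not} use the pointwise row-norm bound $t\le\|A^{(i)}\|_2^2$ here. Instead it first passes to the population expectation via the Chebyshev correction factors, obtaining a double sum $\sum_{i_{k_1}}\sum_{i_{k_2}\ne i_{k_1}}\|A^{(i_{k_1})}\|_2^2\|A^{(i_{k_2})}\|_2^2(\cdots)$ divided by $\|A\|_F^2\,\tau_{min}$, then rewrites $\|x_{k+1}-\tilde{x}_{k+1}\|_2^2$ through the orthogonal pair $\bigl(\frac{(A^{(i_{k_2})})^*}{\|A^{(i_{k_2})}\|_2},\frac{u_k}{\|u_k\|_2}\bigr)$ as $|s_{k_2}|^2+\bigl|\frac{1}{\|u_k\|_2}s_{k_1}-\frac{|\mu_k|}{\|u_k\|_2}s_{k_2}\bigr|^2$, symmetrises over $p<q$, and bounds $(|\xi|+|\eta|)^2\le\frac{1+\Delta}{1-\Delta}$. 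That is what produces the $\frac{1}{\|A\|_F^2}\cdot\frac{\tau_{max}}{\tau_{min}}$ combination in \eqref{3-35} rather than a $\frac{1}{t}$ factor; bounding pointwise by $t$ first and averaging afterward would give the right structure but the wrong constant.
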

and $D=\min\left\{\frac{\delta^2(1-\delta)}{1+\delta},\frac{\Delta^2(1-\Delta)}{1+\Delta}\right\}$, $\tilde{D}=\min\left\{\frac{\tilde{\delta}^2(1-\tilde{\delta})}{1+\tilde{\delta}},\frac{\tilde{\Delta}^2(1-\tilde{\Delta})}{1+\tilde{\Delta}}\right\}$ with $\delta$, $\tilde{\delta}$ and $\Delta$, $\tilde{\Delta}$ being defined as in Eqs. \eqref{2-2} and \eqref{2-3}, and $\tau_{min}=\|A\|_F^2-\max\limits_{i\in[m]}\|A^{(i)}\|_2^2$ , $\tilde{\tau}_{min}=\|A\|_F^2-\max\limits_{i\in[n]}\|A_{(i)}\|_2^2$ , $\tau_{max}=\|A\|_F^2-\min\limits_{i\in[m]}\|A^{(i)}\|_2^2$ , $\tilde{\tau}_{max}=\|A\|_F^2-\min\limits_{i\in[n]}\|A_{(i)}\|_2^2$.
\begin{proof}
Similar to Theorem \ref{The-2}.
Define 
\begin{equation} \label{3-36}  
\begin{aligned}
\tilde{x}_{k+1}&=x_k+\frac{\|A^{(i_{k_2})}\|_2^2({b}^{(i_{k_1})}-A^{(i_{k_1})}x_k)-A^{(i_{k_1})}(A^{(i_{k_2})})^*({b}^{(i_{k_2})}-A^{(i_{k_2})}x_k)}{\|A^{(i_{k_1})}\|_2^2\|A^{(i_{k_2})}\|_2^2-|A^{(i_{k_1})}(A^{(i_{k_2})})^*|^2}(A^{(i_{k_1})}) ^{*}\\
&\quad+\frac{\|A^{(i_{k_1})}\|_2^2({b}^{(i_{k_2})}-A^{(i_{k_2})}x_k)-A^{(i_{k_2})}(A^{(i_{k_1})})^*({b}^{(i_{k_1})}-A^{(i_{k_1})}x_k)}{\|A^{(i_{k_1})}\|_2^2\|A^{(i_{k_2})}\|_2^2-|A^{(i_{k_1})}(A^{(i_{k_2})})^*|^2}(A^{(i_{k_2})}) ^{*}.
\end{aligned}
\end{equation}
As shown in Algorithm \ref{algo-3-6}, $\tilde{x}_{k+1}$ is the projection of $x_{k}$ on the intersection of two hyperplanes $\mathcal{H}_{i_{k_1}}:=\left\{x:\left \langle A^{(i_{k_1})},x \right \rangle =b_{\mathcal{R}(A)}^{(i_{k_1})}\right\}$ and $\mathcal{H}_{i_{k_2}}:=\left\{x:\left \langle A^{(i_{k_2})},x \right \rangle =b_{\mathcal{R}(A)}^{(i_{k_2})}\right\}$. Moreover,
\begin{equation}\label{3-37}   
x_{k+1}=x_k+\bar{\gamma}_k(A^{(i_{k_1})}) ^{*}+\bar{\lambda}_k(A^{(i_{k_2})}) ^{*},
\end{equation}
where $\bar{\gamma}_k=\frac{\|A^{(i_{k_2})}\|_2^2(b_{\mathcal{R}(A)}^{(i_{k_1})}+w^{(i_{k_1})}-A^{(i_{k_1})}x_k)-A^{(i_{k_1})}(A^{(i_{k_2})})^*({b}^{(i_{k_2})}b_{\mathcal{R}(A)}^{(i_{k_2})}+w^{(i_{k_2})}-A^{(i_{k_2})}x_k)}{\|A^{(i_{k_1})}\|_2^2\|A^{(i_{k_2})}\|_2^2-|A^{(i_{k_1})}(A^{(i_{k_2})})^*|^2}$\\
and ${\bar{\lambda}_k=\frac{\|A^{(i_{k_1})}\|_2^2(b_{\mathcal{R}(A)}^{(i_{k_2})}+w^{(i_{k_2})}-A^{(i_{k_2})}x_k)-A^{(i_{k_2})}(A^{(i_{k_1})})^*(b_{\mathcal{R}(A)}^{(i_{k_1})}+w^{(i_{k_1})}-A^{(i_{k_1})}x_k)}{\|A^{(i_{k_1})}\|_2^2\|A^{(i_{k_2})}\|_2^2-|A^{(i_{k_1})}(A^{(i_{k_2})})^*|^2}}$.\\
Then $x_{k+1}$ is the projection of $x_{k}$ on the intersection of two hyperplanes $\mathcal{H}_{i_{k_1}}^{w}:=\left\{x:\left \langle A^{(i_{k_1})},x \right \rangle =b_{\mathcal{R}(A)}^{(i_{k_1})}+w^{(i_{k_1})}\right\}$ and $\mathcal{H}_{i_{k_2}}^{w}:=\left\{x:\left \langle A^{(i_{k_2})},x \right \rangle =b_{\mathcal{R}(A)}^{(i_{k_2})}+w^{(i_{k_2})}\right\}$.
It follows that
$$
\begin{aligned}
x_{k+1}-\tilde{x}_{k+1}&=\frac{\|A^{(i_{k_2})}\|_2^2w^{(i_{k_1})}-A^{(i_{k_1})}(A^{(i_{k_2})})^*w^{(i_{k_2})}}{\|A^{(i_{k_1})}\|_2^2\|A^{(i_{k_2})}\|_2^2-|A^{(i_{k_1})}(A^{(i_{k_2})})^*|^2}(A^{(i_{k_1})})^*\\
&\quad+\frac{\|A^{(i_{k_1})}\|_2^2w^{(i_{k_2})}-A^{(i_{k_2})}(A^{(i_{k_1})})^*w^{(i_{k_1})}}{\|A^{(i_{k_1})}\|_2^2\|A^{(i_{k_2})}\|_2^2-|A^{(i_{k_1})}(A^{(i_{k_2})})^*|^2}(A^{(i_{k_2})})^*
\end{aligned}
$$
and
$$
\begin{aligned}
\tilde{x}_{k+1}-x_{\star}&=\left(I-\frac{\|A^{(i_{k_2})}\|_2^2(A^{(i_{k_1})})^{*}A^{(i_{k_1})}+(A^{(i_{k_1})})^{*}A^{(i_{k_1})}(A^{(i_{k_2})})^{*}A^{(i_{k_2})}}{ \|A^{(i_{k_1})}\|_2^2\|A^{(i_{k_2})}\|_2^2-|A^{(i_{k_1})}(A^{(i_{k_2})})^*|^2}\right.\\
&\quad\left.-\frac{\|A^{(i_{k_1})}\|_2^2(A^{(i_{k_2})})^{*}A^{(i_{k_2})}+(A^{(i_{k_2})})^{*}A^{(i_{k_2})}(A^{(i_{k_1})})^{*}A^{(i_{k_1})}}{ \|A^{(i_{k_1})}\|_2^2\|A^{(i_{k_2})}\|_2^2-|A^{(i_{k_1})}(A^{(i_{k_2})})^*|^2}\right)(x_k-x_{\star}).
\end{aligned}
$$
From the above results, it is trivial to see that $(x_{k+1}-\tilde{x}_{k+1})^*(\tilde{x}_{k+1}-x_{\star})=0$. This shows that
\begin{equation}\label{3-38}
\mathbb{E}_k\|x_{k+1}-x_{\star}\|_2^2=\mathbb{E}_k\|\tilde{x}_{k+1}-x_{\star}\|_2^2+\mathbb{E}_k\|x_{k+1}-\tilde{x}_{k+1}\|_2^2.
\end{equation}
Following Chebyshev’s (weak) law of large numbers, if 
$lm$ is sufficiently large and there are two scalars $0<\hat{\varepsilon}_k,\check{\varepsilon}_k\ll1$, then 
\begin{equation}\label{3-39}
\begin{aligned}
&\quad\frac{\left(\sum_{\substack{i_{k_1}\in\Gamma_k }}\|A^{(i_{k_1})}\|_2^2\sum_{\substack{i_{k_2}\in\Gamma_k\\i_{k_2}\neq i_{k_1}}}\|A^{(i_{k_2})}\|_2^2\right)\|x_{k+1}-\tilde{x}_{k+1}\|_2^2}{(lm)^2-lm}\\
&=\frac{\left(\sum\limits_{\substack{i_{k_1}\in[m] }}\|A^{(i_{k_1})}\|_2^2\sum\limits_{\substack{i_{k_2}\in[m]\\i_{k_2}\neq i_{k_1}}}\|A^{(i_{k_2})}\|_2^2\right)\|x_{k+1}-\tilde{x}_{k+1}\|_2^2}{(m)^2-m}(1\pm\hat{\varepsilon }_k)
\end{aligned}
\end{equation}
and
\begin{equation}\label{3-40}
\begin{aligned}
&\quad\frac{\sum_{\substack{i_{k_1}\in\Gamma_k }}\|A^{(i_{k_1})}\|_2^2\sum_{\substack{i_{k_2}\in\Gamma_k\\i_{k_2}\neq i_{k_1}}}\|A^{(i_{k_2})}\|_2^2}{(lm)^2-lm}\\
&=\frac{\sum_{\substack{i_{k_1}\in[m] }}\|A^{(i_{k_1})}\|_2^2\sum\limits_{\substack{i_{k_2}=1\\i_{k_2}\neq i_{k_1}}}^{m}\|A^{(i_{k_2})}\|_2^2}{m^2-m}(1\pm\check{\varepsilon }_k )\\
&=\frac{\|A\|_F^2(\|A\|_F^2-\|A^{(i_{k_1})}\|_2^2)}{m^2-m}(1\pm\check{\varepsilon }_k) . 
\end{aligned}
\end{equation}
Then, by Eqs.\eqref{add-1}, \eqref{3-39}, and \eqref{3-40}, it follows that
\begin{equation}\label{3-41}
\begin{aligned}
\mathbb{E}_k\|x_{k+1}-\tilde{x}_{k+1}\|_2^2&=\tilde{P}_{k_1}\tilde{P}_{k_2}\|x_{k+1}-\tilde{x}_{k+1}\|_2^2\\
&\leq\frac{(1\pm\hat{\varepsilon }_k)}{(1\pm\check{\varepsilon }_k )}\frac{1}{\|A\|_F^2\tau_{min}}\sum\limits_{\substack{i_{k_1}\in[m] }}\|A^{(i_{k_1})}\|_2^2\sum\limits_{\substack{i_{k_2}\in[m]\\i_{k_2}\neq i_{k_1}}}\|A^{(i_{k_2})}\|_2^2\\
&\quad\|x_{k+1}-\tilde{x}_{k+1}\|_2^2\\
&\leq\frac{1+\hat{\varepsilon }_k}{1-\check{\varepsilon }_k }\frac{1}{\|A\|_F^2\tau_{min}}\sum\limits_{\substack{i_{k_1}\in[m] }}\sum\limits_{\substack{i_{k_2}\in[m]\\i_{k_2}\neq i_{k_1}}}\|A^{(i_{k_1})}\|_2^2\|A^{(i_{k_2})}\|_2^2\\
&\left(|s_{k_2}|^2+\left|\frac{1}{\|u_k\|_2}s_{k_1}-\frac{|\mu_k|}{\|u_k\|_2}s_{k_2}\right|^2\right)\\
&
\end{aligned}
\end{equation}
with $s_{k_r}=\frac{b_{\mathcal{R}(A)^{\bot}}^{(i_{k_r})}-z_k^{(i_{k_r})}}{\|A^{(i_{k_r})}\|_2}$, $r=1,2$. Define $\xi=\frac{1}{\|u_k\|_2}$ and $\eta=\frac{|\mu_k|}{\|u_k\|_2}$, for $p,q\in[m]$ and $p\neq q$, it follows that

\begin{equation}\label{3-42}
\begin{aligned}
&\sum\limits_{{i_{k_1}} \in [m]}\sum\limits_{{i_{k_2}} \in [m],i_{k_2}\neq i_{k_1}}\|A^{(i_{k_1})}\|_2^2\|A^{(i_{k_2})}\|_2^2\left|\frac{1}{\|u_k\|_2}s_{k_1}-\frac{|\mu_k|}{\|u_k\|_2}s_{k_2}\right|^2\\
&=\sum\limits_{p<q}\|A^{(p)}\|_2^2\|A^{(q)}\|_2^2\left(|\xi s_{p}-\eta s_{q}|^2+|\xi s_{q}-\eta s_{p}|^2 \right)\\
&\leq\sum\limits_{p<q}\|A^{(p)}\|_2^2\|A^{(q)}\|_2^2(\xi+\eta)^2(s_{p}^2+s_{q}^2)\\
\end{aligned}
\end{equation}
$$\begin{aligned}
&\leq\frac{1+\Delta}{1-\Delta}\sum\limits_{p<q}\left(\|A^{(p)}\|_2^2\left|b_{\mathcal{R}(A)^{\bot}}^{(q)}-z_k^{(q)}\right|^2+\|A^{(q)}\|_2^2\left|b_{\mathcal{R}(A)^{\bot}}^{(p)}-z_k^{(p)}\right|^2\right)\\
&=\frac{1+\Delta}{1-\Delta}\sum\limits_{p\in [m]}\sum\limits_{q\in [m],q\neq p}\|A^{(q)}\|_2^2\left|b_{\mathcal{R}(A)^{\bot}}^{(p)}-z_k^{(p)}\right|^2\\
&\leq\frac{1+\Delta}{1-\Delta}\tau_{max}\|b_{\mathcal{R}(A)^{\bot}}-z_k\|_2^2.
\end{aligned}
$$
The first inequality follows by the fact that for any $\xi$,$\eta$,$a$,$b\in\mathbb{C}$,
$$|\xi a-\eta b|^2+|\xi b-\bar{\eta} a|^2\leq(|\xi|+|\eta|)^2(|a|^2+|b|^2).$$ 
The second inequality follows from the fact that
$$
(|\xi|+|\eta|)^2=(\frac{1+|\mu_k|}{\|u_k\|_2})^2=\frac{1+|\mu_k|}{1-|\mu_k|}\leq\frac{1+\triangle }{1-\triangle }.
$$
Besides, it holds that
\begin{equation}\label{3-43}
\sum\limits_{{i_{k_1}} \in [m]}\sum\limits_{{i_{k_2}} \in [m],i_{k_2}\neq i_{k_1}}\|A^{(i_{k_1})}\|_2^2\|A^{(i_{k_2})}\|_2^2|s_{k_2}|^2\leq\tau_{max}\|b_{\mathcal{R}(A)^{\bot}}-z_k\|_2^2
\end{equation}
Therefore, following by Eqs.\eqref{3-42} and \eqref{3-43} yields
\begin{equation}\label{3-44}
\mathbb{E}_k\|x_{k+1}-\tilde{x}_{k+1}\|_2^2\leq\frac{1+\hat{\varepsilon }_k}{1-\check{\varepsilon }_k }\frac{2}{(1-\triangle)\|A\|_F^2}\frac{\tau_{max}}{\tau_{min}}\|b_{\mathcal{R}(A)^{\bot}}-z_{k}\|_2^2   
\end{equation}
Since $\tilde{x}_{k+1}$ can be obtained by TRKS  for the consistent linear system $Ax=b_{\mathcal{R}(A)}$, from Lemma \ref{lem-1} and $x_k\in\mathcal{R}(A^*)$, we have
\begin{equation}\label{3-45}
\mathbb{E}_k\|\tilde{x}_{k+1}-x_{\star}\|_2^2\leq\tilde{\alpha}\|x_k-x_{\star}\|_2^2.
\end{equation}
Substituting \eqref{3-44} and \eqref{3-45} into Eq.\eqref{3-38} yields
\begin{equation}\label{3-46}
\mathbb{E}_k\|x_{k+1}-x_{\star}\|_2^2\leq \tilde{\alpha}\|x_k-x_{\star}\|_2^2+\frac{1+\hat{\varepsilon }_k}{1-\check{\varepsilon }_k }\frac{2}{(1-\triangle)\|A\|_F^2}\frac{\tau_{max}}{\tau_{min}}\|b_{\mathcal{R}(A)^{\bot}}-z_{k}\|_2^2. 
\end{equation}
One can see that $z_k$ generated by TREKS is equivalent to $z_k$ generated by TRKS for a consistent linear system $A^*z=0$. Thus, it follows from
$z_0-b_{\mathcal{R}(A)^{\bot}}=b_{\mathcal{R}(A)}\in\mathcal{R}(A)$ and Lemma \ref{lem-1} that
$$
\mathbb{E}_k\|b_{\mathcal{R}(A)^{\bot}}-z_{k+1}\|_2^2\leq\tilde{\beta}\|x_k-x_{\star}\|_2^2.
$$
For any $l\geq 0$, it follows that
\begin{equation}\label{3-47}
\mathbb{E}\|z_l-b_{\mathcal{R}(A)^{\bot}}\|_2^2\leq \tilde{\beta}^l\|b_{\mathcal{R}(A)}\|_2^2\leq\|b_{\mathcal{R}(A)}\|_2^2.
\end{equation}
Similar to Theorem \ref{The-2}, for any nonnegative integer $k$, define $k_p:=\left \lfloor k/2 \right \rfloor$ and $k_q:=k-k_p$. According to \eqref{3-46} and \eqref{3-47}, we have
\begin{equation}\label{3-48}
\begin{aligned}
\mathbb{E}\|x_{k_p}-x_{\star}\|_2^2&\leq \tilde{\alpha}\mathbb{E}\|x_{k_p-1}-x_{\star}\|_2^2+ \frac{1+\hat{\varepsilon }_k}{1-\check{\varepsilon }_k }\frac{2}{(1-\triangle)\|A\|_F^2}\frac{\tau_{max}}{\tau_{min}}\|b_{\mathcal{R}(A)^{\bot}}-z_{k_{p}-1}\|_2^2\\
&\leq\tilde{\alpha}^{k_p}\|x_{0}-x_{\star}\|_2^2+\frac{1+\hat{\varepsilon }_k}{1-\check{\varepsilon }_k }\frac{2}{(1-\triangle)\|A\|_F^2}\frac{\tau_{max}}{\tau_{min}}\sum_{l=0}^{k_p-1}\tilde{\alpha}^{l}\|b_{\mathcal{R}(A)}\|_2^2\\
&\leq\tilde{\alpha}^{k_p}\|x_{\star}\|_2^2+\frac{1+\hat{\varepsilon }_k}{1-\check{\varepsilon }_k }\frac{2}{(1-\triangle)\|A\|_F^2}\frac{\tau_{max}}{\tau_{min}}\sum_{l=0}^{k_p-1}\tilde{\alpha}^{l}\|b_{\mathcal{R}(A)}\|_2^2\\
&\leq\tilde{\alpha}^{k_p}\|x_{\star}\|_2^2+\frac{1+\hat{\varepsilon }_k}{1-\check{\varepsilon }_k }\frac{2}{(1-\triangle)\|A\|_F^2}\frac{\tau_{max}}{\tau_{min}}\sum_{l=0}^{\infty }\tilde{\alpha}^{l}\|b_{\mathcal{R}(A)}\|_2^2\\
&=\tilde{\alpha}^{k_p}\|x_{\star}\|_2^2+\frac{1+\hat{\varepsilon }_k}{1-\check{\varepsilon }_k }\frac{2}{(1-\triangle)(1-\alpha)}\frac{\tau_{max}}{\tau_{min}}\frac{\|b_{\mathcal{R}(A)}\|_2^2}{\|A\|_F^2}
\end{aligned}  
\end{equation}
Moreover,
\begin{equation}\label{3-49}
\mathbb{E}\|z_{l+k_p}-b_{\mathcal{R}(A)^{\bot}}\|_2^2\leq \tilde{\beta}^{l+k_p}\|b_{\mathcal{R}(A)}\|_2^2\leq\tilde{\beta}^{k_p}\|b_{\mathcal{R}(A)}\|_2^2
\end{equation}
holds for $l\geq 0$.
Consequently, by Eqs.\eqref{3-48}, \eqref{3-49} and the fact $b_{\mathcal{R}(A)}=Ax_{\star}$, it follows that
\begin{equation}\label{3-50}
\begin{aligned}
\mathbb{E}\|x_{k}-x_{\star}\|_2^2&\leq \tilde{\alpha}^{k_q}
\mathbb{E}\|x_{k_p}-x_{\star}\|_2^2+\frac{1+\hat{\varepsilon }_k}{1-\check{\varepsilon }_k }\frac{2}{(1-\triangle)\|A\|_F^2}\frac{\tau_{max}}{\tau_{min}}\tilde{\beta}^{k_p}\sum_{l=0}^{k_q-1}\tilde{\alpha}^{l}\|b_{\mathcal{R}(A)}\|_2^2\\
&\leq\tilde{\alpha}^{k_q}\left( \tilde{\alpha}^{k_p}\|x_{\star}\|_2^2+\frac{1+\hat{\varepsilon }_k}{1-\check{\varepsilon }_k }\frac{2}{(1-\triangle)(1-\tilde{\alpha})}\frac{\tau_{max}}{\tau_{min}}\frac{\|b_{\mathcal{R}(A)}\|_2^2}{\|A\|_F^2}\right)\\
&\quad+\tilde{\beta}^{k_p}\frac{1+\hat{\varepsilon }_k}{1-\check{\varepsilon }_k }\frac{2}{(1-\triangle)(1-\tilde{\alpha})}\frac{\tau_{max}}{\tau_{min}}\frac{\|b_{\mathcal{R}(A)}\|_2^2}{\|A\|_F^2}\\
&=\tilde{\alpha}^{k}\|x_{\star}\|_2^2+(\tilde{\alpha}^{k_q}+\tilde{\beta}^{k_p})\frac{1+\hat{\varepsilon }_k}{1-\check{\varepsilon }_k }\frac{2}{(1-\triangle)(1-\tilde{\alpha})}\frac{\tau_{max}}{\tau_{min}}\frac{\|b_{\mathcal{R}(A)}\|_2^2}{\|A\|_F^2}\\
&\leq\tilde{\alpha}^{k}\|x_{\star}\|_2^2+(\tilde{\alpha}^{k_q}+\tilde{\beta}^{k_p})\frac{1+\hat{\varepsilon }_k}{1-\check{\varepsilon }_k }\frac{2}{(1-\triangle)(1-\tilde{\alpha})}\frac{\tau_{max}}{\tau_{min}}\frac{\lambda_{min}(A^*A)}{\|A\|_F^2}\|x_{\star}\|_2^2\\
&\leq(\max\{\tilde{\alpha},\tilde{\beta}\})^{k_p}\left(1+\frac{1+\hat{\varepsilon }_k}{1-\check{\varepsilon }_k }\frac{4}{(1-\triangle)(1-\tilde{\alpha})}\frac{\tau_{max}}{\tau_{min}}\frac{\lambda_{min}(A^*A)}{\|A\|_F^2}\right)\|x_{\star}\|_2^2.
\end{aligned}
\end{equation}
The proof is completed.
\end{proof}

\subsection{Two-dimensional greedy randomized extended Kaczmarz method}\label{subsection-3-3}
As well known that the crucial key of randomized Kaczmarz-type methods is to select the active row effectively. Similar to the GREK method, we apply the greedy strategy to the two-dimensional extended Kaczmarz method, that is, at each iteration, grab the rows and columns corresponding to the two larger elements in the corresponding residual vector, respectively. Putting this result with Algorithm \ref{algo-3-5} gives the following algorithm.
\vspace{1em}
\begin{breakablealgorithm}
	\caption{Two-dimensional greedy randomized extended Kaczmarz method}
	\label{algo-3-7}
	\begin{algorithmic}[1]
        \REQUIRE $A, b,z_0= b$\\
        \ENSURE $x_{k+1}$, $z_{k+1}$\\
        \STATE
       For $k=0, 1, 2,\cdots $, until convergence, do:\\
       \STATE        Compute\\
        \begin{equation}\label{3-51}
         \epsilon  _{k} = \frac{1}{2}\left(\frac{1}{{\parallel b - Ax_{k}{\parallel _{2}^2}}}\mathop {\max }\limits_{\substack{i_k \in [m]}} \left\{ \frac{|{b}^{(i_{k})}-z_{k}^{(i_{k})}-A^{(i_{k})}x_{k}|^2} {\| A^{(i_{k})}\|_{2}^2 }\right\} + \frac{1}{{\parallel A\parallel _{F}^2}}\right),
        \end{equation}
        \begin{equation}\label{3-52}
         \bar{\epsilon} _{k} = \frac{1}{2}\left(\frac{1}{{\parallel  A^*z_{k}{\parallel _{2}^2}}}\mathop {\max }\limits_{\substack{j_k \in [n]}} \left\{ \frac{|A_{(j_{k})}^*z_k|^2}{\|A_{(j_{k})}\|_2^2}\right\} + \frac{1}{{\parallel A\parallel _{F}^2}}\right),
        \end{equation}
         \STATE
         Determine the index sets of positive integers\\
         \begin{equation}\label{3-53}
         \mathcal{U}_k = \left\{ {{i_k}\Big| {|{b}^{(i_{k})}-z_{k}^{(i_{k})}-A^{(i_{k})}x_{k}|^2 \ge {\epsilon_k}} \parallel b - 
         Ax_{k}{\parallel _{2}^2}}\| A^{(i_{k})}\|_{2}^2 \right\}.\end{equation}\\
         \begin{equation}\label{3-54}
         \bar{\mathcal{U}}_k = \left\{ {j_k}\Big| {|A^*_{(j_{k})}z_{k}|^2 \ge \bar{\epsilon}_k} \parallel 
         A^*{z_{k}}{\parallel _{2}^2}\| A_{(j_{k})}\|_{2}^2 \right\}.\end{equation}\\
         \STATE
   Compute the $i$-th element of the vector 
     $\tilde{\mathbf{r}} _{k_1}$ by  \\
     $$\tilde{\mathbf{r} } _{k_1} ^{(i)}=\left\{ \begin{array}{l}
    {{b}^{(i_{k})}-z_{k}^{(i_{k})}-A^{(i_{k})}x_{k}},\qquad  \mathrm{if} \quad i \in \mathcal{U}_{k}
     \\
    0.\qquad \qquad \quad \enspace \mathrm{otherwise}
    \end{array}
    \right.$$
        \STATE
        Select $i_{k_1} \in {\mathcal{U}_k}$ with probability $\mathbb{P}$(row =$i_{k_1}$)=$ \mid {{{\tilde {\mathbf{r} }}_{k_1}}^{(i_{k_1})}}\mid^2/\left \| {{{\tilde {\mathbf{r} }}}_{k_1}} \right \|_{2}^2 $.\\
        \STATE   
        Select $i_{k_2} \in {\mathcal{U}_k}$ with probability $\mathbb{P}$(row =$i_{k_2}$)=$ \mid {{{\tilde {\mathbf{r} }}_{k_1}}^{(i_{k_2})}}\mid^2  /\left \| {{{\tilde {\mathbf{r} }}}_{k_1}} \right \|_{2}^2 $.\\
         \STATE
   Compute the $j$-th element of the vector 
     $\tilde{\mathbf{r}} _{k_2}$ by  \\
     $$\tilde{\mathbf{r} } _{k_2} ^{(j)}=\left\{ \begin{array}{l}
    {A_{(j_{k})}^*z_k},\qquad  \mathrm{if} \quad j \in \bar{\mathcal{U}}_{k}
     \\
    0.\qquad \qquad \quad \enspace \mathrm{otherwise}
    \end{array}
    \right.$$
        \STATE
        Select $j_{k_1} \in {\bar{\mathcal{U}}_k}$ with probability $\mathbb{P}$(column =$j_{k_1}$)=$ \mid {{{\tilde {\mathbf{r} }}_{k_2}}^{(j_{k_1})}}\mid^2  /\left \| {{{\tilde {\mathbf{r} }}}_{k_2}} \right \|_{2}^2 $.\\
        \STATE   
        Select $j_{k_2} \in {\bar{\mathcal{U}}_k}$ with probability $\mathbb{P}$(column=$j_{k_2}$)=$ \mid {{{\tilde {\mathbf{r} }}_{k_2}}^{(j_{k_2})}}\mid^2  /\left \| {{{\tilde {\mathbf{r} }}}_{k_2}} \right \|_{2}^2 $.\\
     \STATE Update $x_{k+1}$ as the line $7$ of Algorithm \ref{algo-3-5}.

     \STATE Update $z_{k+1}$ as the line $8$ of Algorithm \ref{algo-3-5}.

	\end{algorithmic}  
\end{breakablealgorithm}
\vspace{1em}

Note that, if the linear system is consistent, then $z_k$ is not required, such that Algorithm \ref{algo-3-7} will reduce to the following two-dimensional greedy randomized Kaczmarz (TGRK) method.
\vspace{1em}
\begin{breakablealgorithm}
	\caption{Two-dimensional greedy randomized Kaczmarz method}
	\label{algo-3-8}
	\begin{algorithmic}[1]
        \REQUIRE $A, b$\\
        \ENSURE $x_{k+1}$\\
        \STATE
       For $k=0, 1, 2,\cdots $, until convergence, do:\\
       \STATE        Compute\\
        \begin{equation}\label{3-55}
         \epsilon  _{k} = \frac{1}{2}\left(\frac{1}{{\parallel b - Ax_{k}{\parallel _{2}^2}}}\mathop {\max }\limits_{\substack{i_k \in [m]}} \left\{ \frac{|{b}^{(i_{k})}-A^{(i_{k})}x_{k}|^2} {\| A^{(i_{k})}\|_{2}^2 }\right\} + \frac{1}{{\parallel A\parallel _{F}^2}}\right),
        \end{equation}
         \STATE
         Determine the index sets of positive integers\\
         \begin{equation}\label{3-56}
         \mathcal{U}_k = \left\{ {{i_k}\Big| {|{b}^{(i_{k})}-A^{(i_{k})}x_{k}|^2 \ge {\epsilon_k}} \parallel b - 
         Ax_{k}{\parallel _{2}^2}}\| A^{(i_{k})}\|_{2}^2 \right\}.\end{equation}\\
         \STATE
   Compute the $i$-th element of the vector 
     $\tilde{\mathbf{r}} _{k_1}$ by  \\
     $$\tilde{\mathbf{r} } _{k_1} ^{(i)}=\left\{ \begin{array}{l}
    {{b}^{(i_{k})}-A^{(i_{k})}x_{k}},\quad  \mathrm{if} \quad i \in \mathcal{U}_{k}
     \\
    0.\qquad \qquad \qquad \enspace \mathrm{otherwise}
    \end{array}
    \right.$$
        \STATE
        Select $i_{k_1} \in {\mathcal{U}_k}$ with probability $\mathbb{P}$(row =$i_{k_1}$)=$ \mid {{{\tilde {\mathbf{r} }}_{k_1}}^{(i_{k_1})}}\mid^2/\left \| {{{\tilde {\mathbf{r} }}}_{k_1}} \right \|_{2}^2 $.\\
        \STATE   
        Select $i_{k_2} \in {\mathcal{U}_k}$ with probability $\mathbb{P}$(row =$i_{k_2}$)=$ \mid {{{\tilde {\mathbf{r} }}_{k_1}}^{(i_{k_2})}}\mid^2  /\left \| {{{\tilde {\mathbf{r} }}}_{k_1}} \right \|_{2}^2 $.\\
     \STATE Update $x_{k+1}$ as the line $5$ of Algorithm \ref{algo-3-6}.

	\end{algorithmic}  
\end{breakablealgorithm}
\vspace{1em}

To analyze the convergence of TGREK, we first provide the following lemma, and refer to Appendix \ref{App-2} for its proof.

\begin{lemma}\label{lem-2}
{\rm Let $x_{\star}=A^{\dagger}b$ be the solution of consistent linear systems $Ax=b$. Then the iterative sequence $\{x_{k}\}$ generated by Algorithm \ref{algo-3-8} converges to $x_{\star }$ for any initial vector $x_0$ in expectation. Moreover, the corresponding error norm in expectation yields}
\begin{equation}\label{3-57}
\mathbb{E}_k\|x_{k+1}-x_{\star}\|_2^2\leq \left[1-\frac{1}{1+\Delta}\left(\frac{\|A\|_F^2}{\tau_{max}}+1\right)\frac{\lambda_{min}(A^*A)}{\|A\|_F^2}\right]\|x_k-x_{\star}\|_2^2.    
\end{equation}
\end{lemma}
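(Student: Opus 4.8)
The plan is to adapt the analysis of the greedy randomized Kaczmarz (GRK) iteration, replacing the single‑hyperplane projection by the projection onto the intersection of the two hyperplanes carried by the selected rows. Because $Ax=b$ is consistent, $A^{(i)}x_{\star}=b^{(i)}$ for every $i$, so $x_{\star}\in\mathcal{H}_{i_{k_1}}\cap\mathcal{H}_{i_{k_2}}$; on the other hand $x_{k+1}-x_k\in\mathrm{span}\{(A^{(i_{k_1})})^{*},(A^{(i_{k_2})})^{*}\}$ is the normal direction of that affine set while $x_{k+1}$ itself lies in it, so $x_{k+1}$ is exactly the orthogonal projection of $x_k$ onto $\mathcal{H}_{i_{k_1}}\cap\mathcal{H}_{i_{k_2}}$. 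The Pythagorean identity then gives
$$\mathbb{E}_k\|x_{k+1}-x_{\star}\|_2^2=\|x_k-x_{\star}\|_2^2-\mathbb{E}_k\|x_{k+1}-x_k\|_2^2,$$
so the whole estimate reduces to a lower bound on the expected step length $\mathbb{E}_k\|x_{k+1}-x_k\|_2^2$.

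For the step length I would use a $2\times2$ Gram‑matrix computation. Set $r_s=b^{(i_{k_s})}-A^{(i_{k_s})}x_k$, $\tilde{r}=(r_1,r_2)^{T}$, and let $M$ be the Hermitian matrix with diagonal $(\|A^{(i_{k_1})}\|_2^2,\|A^{(i_{k_2})}\|_2^2)$ and $(1,2)$‑entry $A^{(i_{k_1})}(A^{(i_{k_2})})^{*}$. When the two rows are not parallel, the update coefficients solve $M(\gamma_k,\lambda_k)^{T}=\tilde{r}$, and a direct computation shows $\|x_{k+1}-x_k\|_2^2=\tilde{r}^{*}M^{-1}\tilde{r}$. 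Conjugating $M$ by $D=\diag(\|A^{(i_{k_1})}\|_2,\|A^{(i_{k_2})}\|_2)$ turns it into $\hat{M}$ with unit diagonal and off‑diagonal of modulus at most $\Delta$, hence $\lambda_{max}(\hat{M})\le 1+\Delta$; since $\tilde{r}^{*}M^{-1}\tilde{r}=(D^{-1}\tilde{r})^{*}\hat{M}^{-1}(D^{-1}\tilde{r})\ge\|D^{-1}\tilde{r}\|_2^2/\lambda_{max}(\hat{M})$, this yields
$$\|x_{k+1}-x_k\|_2^2\ \ge\ \frac{1}{1+\Delta}\Big(\frac{|r_1|^2}{\|A^{(i_{k_1})}\|_2^2}+\frac{|r_2|^2}{\|A^{(i_{k_2})}\|_2^2}\Big).$$

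Now I would take $\mathbb{E}_k$. Since $i_{k_1}$ and $i_{k_2}$ are drawn from the same distribution $\mathbb{P}(i)=|r_i|^2/\sum_{j\in\mathcal{U}_k}|r_j|^2$ on $\mathcal{U}_k$, the two summands have equal expectation, so $\mathbb{E}_k\|x_{k+1}-x_k\|_2^2\ge\frac{2}{1+\Delta}\sum_{i\in\mathcal{U}_k}|r_i|^4\big(\|A^{(i)}\|_2^2\sum_{j\in\mathcal{U}_k}|r_j|^2\big)^{-1}$. The membership rule for $\mathcal{U}_k$ gives $|r_i|^2/\|A^{(i)}\|_2^2\ge\epsilon_k\|b-Ax_k\|_2^2$ for $i\in\mathcal{U}_k$, which bounds the last sum below by $\epsilon_k\|b-Ax_k\|_2^2$; combining this with $\epsilon_k\|A\|_F^2\ge\frac{1}{2}\big(\|A\|_F^2/\tau_{max}+1\big)$ — established as in the proof of Theorem \ref{The-1}, using that the two rows chosen at step $k-1$ have vanishing residual components at $x_k$ and may therefore be removed from the weighted average that represents $\|b-Ax_k\|_2^2$ — and with $\|b-Ax_k\|_2^2=\|A(x_k-x_{\star})\|_2^2\ge\lambda_{min}(A^{*}A)\|x_k-x_{\star}\|_2^2$ (valid since $x_k-x_{\star}\in\mathcal{R}(A^{*})$), one obtains $\mathbb{E}_k\|x_{k+1}-x_k\|_2^2\ge\frac{1}{1+\Delta}\big(\|A\|_F^2/\tau_{max}+1\big)\frac{\lambda_{min}(A^{*}A)}{\|A\|_F^2}\|x_k-x_{\star}\|_2^2$. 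Substituting into the Pythagorean identity gives \eqref{3-57}.

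I expect the delicate part to be not the computation but the degenerate configurations the algorithm allows. If the two sampled indices coincide (or, more generally, the two selected rows are parallel), the iteration collapses to a single Kaczmarz step, and one must argue separately that the sharp bound on $\|x_{k+1}-x_k\|_2^2$ still propagates the claimed rate; this is precisely where the standing hypothesis $\Delta<1$ — equivalently, no two distinct rows of $A$ are parallel — and the exact (with‑ versus without‑replacement) sampling convention enter. A minor caveat concerns the very first step: for $k=0$ only $\epsilon_0\|A\|_F^2\ge1$ is available rather than the sharper bound above, so the displayed contraction factor is the one valid for $k\ge1$.
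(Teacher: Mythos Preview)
Your argument is correct and reaches the same contraction factor as the paper, but by a genuinely different route. The paper's proof (Appendix~\ref{App-2}) works with the orthogonalised pair $(v_{k_1},u_k)$ from \cite{WWT}: it writes the squared step length as $|v_{k_1}|^2+\bigl|\tfrac{1}{\|u_k\|_2}v_{k_2}-\tfrac{|\mu_k|}{\|u_k\|_2}v_{k_1}\bigr|^2$, then handles the cross term via a symmetrisation over ordered pairs $(p,q)$ and the elementary inequality $|\theta a-\zeta b|^2+|\theta b-\zeta a|^2\ge(\theta-\zeta)^2(|a|^2+|b|^2)$, which produces the factor $\tfrac{1-\Delta}{1+\Delta}$; summing the two contributions yields $\tfrac{1}{1+\Delta}$. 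Your approach instead recognises $\|x_{k+1}-x_k\|_2^2=\tilde r^{*}M^{-1}\tilde r$ and bounds $M^{-1}$ through the spectral inequality $\hat M\preceq(1+\Delta)I$ for the normalised Gram matrix. This is shorter, avoids the pairing argument entirely, and would extend verbatim to a $p$-row block ($\hat M\preceq(1+(p-1)\Delta)I$); the paper's decomposition, on the other hand, makes the underlying Gram--Schmidt geometry explicit and ties the analysis to the two-subspace framework used elsewhere in the manuscript.

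One remark on your caveat about coincident indices: the paper's own proof silently assumes $i_{k_2}\neq i_{k_1}$ in the definition of $\mathcal{P}_{k_2}$, so the degeneracy you flag is present there as well and is not a defect peculiar to your argument.
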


From Lemma \ref{lem-2}, we give the following theorem for guaranteeing the convergence of Algorithm TGREK.

\begin{theorem}\label{The-7}
{\rm Let $x_{\star}=A^{\dagger}b$ be the  least-norm solution of the least-squares
problem \eqref{1-1}. Then the iterative sequence $\{x_{k}\}$ generated by Algorithm \ref{algo-3-7} converges to $x_{\star }$ for any initial vector $x_0$ in expectation. Moreover, the corresponding error norm in expectation yields

\begin{equation}\label{3-58}
\mathbb{E}\|x_{k}-x_{\star}\|_2^2\leq(\max\{\alpha_1,\beta_1\})^{\left \lfloor \frac{k}{2} \right \rfloor }\left(1+\frac{2\lambda_{min}(A^*A)}{t\tau_{min}(1-\alpha_1)}\left(\frac{1+\triangle }{1-\triangle }+\|A\|_F^2\right)\right)\|x_{\star}\|_2^2
\end{equation}
where $$\alpha_1=1-\frac{1}{1+\Delta}\left(\frac{\|A\|_F^2}{\tau_{max}}+1\right)\frac{\lambda_{min}(A^*A)}{\|A\|_F^2}$$
and
$$\beta_1=1-\frac{1}{1+\tilde{\Delta}}\left(\frac{\|A\|_F^2}{\tilde{\tau}_{max}}+1\right)\frac{\lambda_{min}(A^*A)}{\|A\|_F^2}.$$}
\end{theorem}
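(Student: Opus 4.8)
The plan is to follow the pattern of the proofs of Theorems~\ref{The-2} and~\ref{The-6}: decompose the error of the ``noisy'' iterate $x_{k+1}$ into a part that behaves like a TGRK step for a consistent system and a perturbation part driven by how far $z_k$ is from $b_{\mathcal{R}(A)^{\bot}}$. First I would introduce the auxiliary iterate $\tilde{x}_{k+1}$ defined by performing, from $x_k$, one step of Algorithm~\ref{algo-3-8} (TGRK) on the consistent system $Ax=b_{\mathcal{R}(A)}$ \emph{with the same realised indices} $i_{k_1},i_{k_2}$ produced by Algorithm~\ref{algo-3-7}; concretely, replace every homogeneous residual entry ${b}^{(i)}-z_{k}^{(i)}-A^{(i)}x_{k}$ by $b_{\mathcal{R}(A)}^{(i)}-A^{(i)}x_{k}$ in the update. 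Then $\tilde{x}_{k+1}$ is the orthogonal projection of $x_k$ onto $\mathcal{H}_{i_{k_1}}\cap\mathcal{H}_{i_{k_2}}$, which contains $x_{\star}$, so $\tilde{x}_{k+1}-x_{\star}$ is orthogonal to both $A^{(i_{k_1})}$ and $A^{(i_{k_2})}$, while $x_{k+1}-\tilde{x}_{k+1}\in\mathrm{span}\{(A^{(i_{k_1})})^{*},(A^{(i_{k_2})})^{*}\}$; consequently $(x_{k+1}-\tilde{x}_{k+1})^{*}(\tilde{x}_{k+1}-x_{\star})=0$ and we obtain the Pythagorean identity $\mathbb{E}_k\|x_{k+1}-x_{\star}\|_2^2=\mathbb{E}_k\|\tilde{x}_{k+1}-x_{\star}\|_2^2+\mathbb{E}_k\|x_{k+1}-\tilde{x}_{k+1}\|_2^2$.

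For the first term, $\tilde{x}_{k+1}$ is a TGRK iterate for the consistent system $Ax=b_{\mathcal{R}(A)}$ and $x_k\in\mathcal{R}(A^{*})$, so \eqref{3-57} of Lemma~\ref{lem-2} yields $\mathbb{E}_k\|\tilde{x}_{k+1}-x_{\star}\|_2^2\le\alpha_1\|x_k-x_{\star}\|_2^2$. For the second term I would write $x_{k+1}-\tilde{x}_{k+1}$ explicitly: in the degenerate (parallel-row) branch it is a single multiple of $(A^{(i_{k_1})})^{*}$ with coefficient $w^{(i_{k_1})}/\|A^{(i_{k_1})}\|_2^2$, where $w^{(i)}=b_{\mathcal{R}(A)^{\bot}}^{(i)}-z_k^{(i)}$, and in the non-degenerate branch its squared norm, after scaling the two row coefficients by $\|A^{(i_{k_r})}\|_2$, reduces through the $2\times2$ Gram structure to $|s_{k_2}|^2+|\xi s_{k_1}-\eta s_{k_2}|^2$ with $s_{k_r}=w^{(i_{k_r})}/\|A^{(i_{k_r})}\|_2$ and $\xi,\eta$ satisfying $\xi=1/\|u_k\|_2$, $\eta=|\mu_k|/\|u_k\|_2$, $|\mu_k|\le\Delta$. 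Taking the conditional expectation over the greedy draws of $i_{k_1},i_{k_2}$ from $\mathcal{U}_k$ with their residual-proportional weights, I would (i) dominate the weighted double sum over $\mathcal{U}_k\times\mathcal{U}_k$ by the full sum over $[m]\times[m]$, (ii) apply the elementary inequality $|\xi a-\eta b|^2+|\xi b-\bar{\eta}a|^2\le(|\xi|+|\eta|)^2(|a|^2+|b|^2)$ together with $(|\xi|+|\eta|)^2\le\frac{1+\Delta}{1-\Delta}$, and (iii) collapse the pair sums using $\sum_{q\ne p}\|A^{(q)}\|_2^2\le\tau_{max}$, the selection weights' lower control by $\tau_{min}$, and $\|A^{(i)}\|_2^2\ge t$ in the denominators, exactly as in \eqref{3-42}. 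This produces a per-step estimate $\mathbb{E}_k\|x_{k+1}-\tilde{x}_{k+1}\|_2^2\le\frac{c}{t\,\tau_{min}}\bigl(\frac{1+\Delta}{1-\Delta}+\|A\|_F^2\bigr)\|b_{\mathcal{R}(A)^{\bot}}-z_k\|_2^2$, hence the one-step recursion $\mathbb{E}_k\|x_{k+1}-x_{\star}\|_2^2\le\alpha_1\|x_k-x_{\star}\|_2^2+\frac{c}{t\,\tau_{min}}\bigl(\frac{1+\Delta}{1-\Delta}+\|A\|_F^2\bigr)\|b_{\mathcal{R}(A)^{\bot}}-z_k\|_2^2$.

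To close, I would observe that the $z$-sequence of Algorithm~\ref{algo-3-7} is exactly the TGRK sequence of Algorithm~\ref{algo-3-8} applied to the consistent homogeneous system $A^{*}z=0$ started at $z_0=b$, since the greedy selection there uses $|A_{(j)}^{*}z_k|^2/\|A_{(j)}\|_2^2$, i.e.\ the residual of $A^{*}z=0$. Because $z_0-b_{\mathcal{R}(A)^{\bot}}=b_{\mathcal{R}(A)}\in\mathcal{R}(A)$, Lemma~\ref{lem-2} applied to $A^{*}$ gives $\mathbb{E}\|z_l-b_{\mathcal{R}(A)^{\bot}}\|_2^2\le\beta_1^{l}\|b_{\mathcal{R}(A)}\|_2^2\le\|b_{\mathcal{R}(A)}\|_2^2$ for all $l\ge0$. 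Then, exactly as in Theorems~\ref{The-2} and~\ref{The-6}, I set $k_p=\lfloor k/2\rfloor$, $k_q=k-k_p$, unroll the one-step recursion over $k_p$ steps and sum the geometric series $\sum_l\alpha_1^{l}=1/(1-\alpha_1)$, then propagate the remaining $k_q$ steps using the $\beta_1^{k_p}$ decay of $\|z_{k_p+l}-b_{\mathcal{R}(A)^{\bot}}\|_2^2$; combining the two, bounding $\|b_{\mathcal{R}(A)}\|_2^2=\|Ax_{\star}\|_2^2$ by a multiple of $\lambda_{min}(A^{*}A)\|x_{\star}\|_2^2$ via $x_{\star}\in\mathcal{R}(A^{*})$, and using $\alpha_1^{k_q}+\beta_1^{k_p}\le 2(\max\{\alpha_1,\beta_1\})^{k_p}$, yields \eqref{3-58}. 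The main obstacle is steps~(ii)--(iii): the two-dimensional perturbation estimate, where one must control the inverse of the $2\times2$ Gram matrix uniformly through $\Delta$ while the two indices are drawn from the \emph{greedy} set $\mathcal{U}_k$ with residual-weighted rather than norm-weighted probabilities, so that reducing the resulting weighted pair sums to $\|b_{\mathcal{R}(A)^{\bot}}-z_k\|_2^2$ with the clean constants $\tau_{min}$, $\tau_{max}$, and $\frac{1+\Delta}{1-\Delta}$ requires care.
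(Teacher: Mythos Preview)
Your overall architecture matches the paper exactly: the Pythagorean split via the auxiliary TGRK iterate $\tilde{x}_{k+1}$, the appeal to Lemma~\ref{lem-2} for $\mathbb{E}_k\|\tilde{x}_{k+1}-x_{\star}\|_2^2\le\alpha_1\|x_k-x_{\star}\|_2^2$, the identification of the $z$-sequence with TGRK applied to $A^{*}z=0$ (giving the $\beta_1$ rate), and the $k_p/k_q$ unrolling with the geometric sum are all precisely what the paper does.

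The one substantive difference is your treatment of $\mathbb{E}_k\|x_{k+1}-\tilde{x}_{k+1}\|_2^2$. You propose to average over the greedy draws of $(i_{k_1},i_{k_2})$ from $\mathcal{U}_k$, dominate the residual-weighted double sum by a full sum over $[m]\times[m]$, and then run the pair-sum machinery of \eqref{3-42}; you rightly flag this as the delicate point, because the greedy weights are residual-proportional rather than row-norm-proportional and do not directly admit the $\tau_{min}/\tau_{max}$ comparison used for TREKS. The paper sidesteps this entirely with a \emph{pointwise} bound, uniform in the realised indices: from $\|x_{k+1}-\tilde{x}_{k+1}\|_2^2\le|s_{k_2}|^2+|\xi s_{k_1}-\eta s_{k_2}|^2$ it simply uses $|s_{k_r}|^2\le\|b_{\mathcal{R}(A)^{\bot}}-z_k\|_2^2/t$ together with $(|\xi|+|\eta|)^2=\frac{1+|\mu_k|}{1-|\mu_k|}\le\frac{1+\Delta}{1-\Delta}$, obtaining
\[
\|x_{k+1}-\tilde{x}_{k+1}\|_2^2\le\frac{1}{t}\Bigl(\frac{1+\Delta}{1-\Delta}+1\Bigr)\|b_{\mathcal{R}(A)^{\bot}}-z_k\|_2^2,
\]
so the conditional expectation over the greedy selection is never actually computed. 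This makes your ``main obstacle'' disappear. Note also that the constant the paper's proof actually produces is $\frac{1}{t}\bigl(\frac{1+\Delta}{1-\Delta}+1\bigr)$, not the $\frac{1}{t\tau_{min}}\bigl(\frac{1+\Delta}{1-\Delta}+\|A\|_F^2\bigr)$ appearing in the displayed statement \eqref{3-58}; do not contort your argument to recover those extra factors.
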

\begin{proof}
According to Eq.\eqref{3-38}, it can be stated that
\begin{equation}\label{3-59}
\mathbb{E}_k\|x_{k+1}-x_{\star}\|_2^2=\mathbb{E}_k\|\tilde{x}_{k+1}-x_{\star}\|_2^2+\mathbb{E}_k\|x_{k+1}-\tilde{x}_{k+1}\|_2^2.
\end{equation}
For the first term of the right-hand side, from Lemma \ref{lem-2} and $x_k\in\mathcal{R}(A^*)$, we have
\begin{equation}\label{3-60}
\mathbb{E}_k\|\tilde{x}_{k+1}-x_{\star}\|_2^2\leq\alpha_1\|x_k-x_{\star}\|_2^2,
\end{equation}
where $\tilde{x}_{k+1}$ is obtained by one TGRK iteration on the vector $x_k$ for the consistent linear system $Ax=b_{\mathcal{R}(A)}$. 
For the second term of the right-hand side of Eq.\eqref{3-59}, from Eq.\eqref{3-41}, we have
\begin{equation}\label{3-61}
\begin{aligned}
\|x_{k+1}-\tilde{x}_{k+1}\|_2^2
&\leq|s_{k_2}|^2+\left|\frac{1}{\|u_k\|_2}s_{k_1}-\frac{|\mu_k|}{\|u_k\|_2}s_{k_2}\right|^2
\end{aligned}
\end{equation}
with $s_{k_l}=\frac{b_{\mathcal{R}(A)^{\bot}}^{(i_{k_l})}-z_k^{(i_{k_l})}}{\|A^{(i_{k_l})}\|_2}$, $l=1,2$.  Moreover, for any $i\in[m]$, we have $\frac{|w^{(i)}|^2}{\| A^{(i)}\|_{2} ^{2}}=\frac{|b_{\mathcal{R}(A)^{\bot}}^{(i)}-z_k^{(i)}|^2}{\| A^{(i)}\|_{2} ^{2}}\leq\frac{1}{\| A^{(i)}\|_2^2}\|b_{\mathcal{R}(A)^{\bot}}-z_{k}\|_2^2\leq \frac{\|b_{\mathcal{R}(A)^{\bot}}-z_{k}\|_2^2}{t}$, where $t=\min\limits_{i\in[m]}\|A^{(i)}\|_2^2$.  
It is trivial to see that
$$
(|\xi|+|\eta|)^2=(\frac{1+|\mu_k|}{\|u_k\|_2})^2=\frac{1+|\mu_k|}{1-|\mu_k|}\leq\frac{1+\triangle }{1-\triangle},
$$
where $\xi=\frac{1}{\|u_k\|_2}$ and $\eta=\frac{|\mu_k|}{\|u_k\|_2}$. Then it follows that
\begin{equation}\label{3-62}
\begin{aligned}
\left|\frac{1}{\|u_k\|_2}s_{k_1}-\frac{|\mu_k|}{\|u_k\|_2}s_{k_2}\right|^2&\leq|\xi s_{k_1}|^2+|\eta s_{k_2}|^2+2|\xi s_{k_1}||\eta s_{k_2}|\\
&\leq |\xi|^2\frac{\|W\|_2^2}{t}+|\eta|^2\frac{\|W\|_2^2}{t}+2|\xi \eta||s_{k_2}s_{k_1}|\\
&\leq(|\xi|+|\eta|)^2\frac{\|W\|_2^2}{t}\\
&\leq\frac{1+\triangle }{1-\triangle }\frac{\|W\|_2^2}{t}
\end{aligned}
\end{equation}
and
\begin{equation}\label{3-63}
|s_{k_2}|^2\leq\frac{\|W\|_2^2}{t},
\end{equation}
where $W=b_{\mathcal{R}(A)^{\bot}}-z_{k}$.
Therefore, by Eqs. \eqref{3-62} and \eqref{3-63}, it follows that
\begin{equation}\label{3-64}
\mathbb{E}_k\|x_{k+1}-\tilde{x}_{k+1}\|_2^2\leq\frac{1}{t}\left(\frac{1+\triangle }{1-\triangle }+1\right)\|b_{\mathcal{R}(A)^{\bot}}-z_{k}\|_2^2.   
\end{equation}
Consequently, from the relations \eqref{3-60} and \eqref{3-64}, it follows that
\begin{equation}\label{3-65}
\mathbb{E}_k\|x_{k+1}-x_{\star}\|_2^2\leq \alpha_1\|x_k-x_{\star}\|_2^2+\frac{1}{t}\left(\frac{1+\triangle }{1-\triangle }+1\right)\|b_{\mathcal{R}(A)^{\bot}}-z_{k}\|_2^2.   
\end{equation}
In the TGREK method, the iteration for $z$ is equivalent to applying TGRK to solve a consistent linear system $A^*z=0$, where $z$ is the $m$-dimensional unknown vector and the least-norm solution is the zero vector. Thus, it follows from
$z_0-b_{\mathcal{R}(A)^{\bot}}=b_{\mathcal{R}(A)}\in\mathcal{R}(A)$ and Lemma \ref{lem-2} that
$$
\mathbb{E}_k\|b_{\mathcal{R}(A)^{\bot}}-z_{k+1}\|_2^2\leq\beta_1\|x_k-x_{\star}\|_2^2.
$$
For any $l\geq 0$, we have
\begin{equation}\label{3-66}
\mathbb{E}\|z_l-b_{\mathcal{R}(A)^{\bot}}\|_2^2\leq \beta_1^l\|b_{\mathcal{R}(A)}\|_2^2\leq\|b_{\mathcal{R}(A)}\|_2^2
\end{equation}
Similar to Theorem \ref{The-2}, for any nonnegative integer $k$, define $k_p:=\left \lfloor k/2 \right \rfloor$ and $k_q:=k-k_p$. 
Then it follows from inequalities \eqref{3-65} and \eqref{3-66} that
\begin{equation}\label{3-67}
\begin{aligned}
\mathbb{E}\|x_{k_p}-x_{\star}\|_2^2&\leq \alpha_1\mathbb{E}\|x_{k_p-1}-x_{\star}\|_2^2+ \frac{1}{t}\left(\frac{1+\triangle }{1-\triangle }+1\right)\|b_{\mathcal{R}(A)^{\bot}}-z_{k_{p}-1}\|_2^2\\
&\leq\alpha_1^{k_p}\|x_{0}-x_{\star}\|_2^2+\frac{1}{t}\left(\frac{1+\triangle }{1-\triangle }+1\right)\sum_{l=0}^{k_p-1}\alpha_1^{l}\|b_{\mathcal{R}(A)}\|_2^2\\
&\leq\alpha_1^{k_p}\|x_{\star}\|_2^2+\frac{1}{t}\left(\frac{1+\triangle }{1-\triangle }+1\right)\sum_{l=0}^{k_p-1}\alpha_1^{l}\|b_{\mathcal{R}(A)}\|_2^2\\
&\leq\alpha_1^{k_p}\|x_{\star}\|_2^2+\frac{1}{t}\left(\frac{1+\triangle }{1-\triangle }+1\right)\sum_{l=0}^{\infty }\alpha_1^{l}\|b_{\mathcal{R}(A)}\|_2^2\\
&=\alpha_1^{k_p}\|x_{\star}\|_2^2+\frac{1}{t(1-\alpha_1)}\left(\frac{1+\triangle }{1-\triangle }+1\right)\|b_{\mathcal{R}(A)}\|_2^2
\end{aligned}
\end{equation}
For any $l\geq 0$ we have
\begin{equation}\label{3-68}
\mathbb{E}\|z_{l+k_p}-b_{\mathcal{R}(A)^{\bot}}\|_2^2\leq \beta_1^{l+k_p}\|b_{\mathcal{R}(A)}\|_2^2\leq\beta_1^{k_p}\|b_{\mathcal{R}(A)}\|_2^2
\end{equation}
Then, by \eqref{3-67}, \eqref{3-68} and the fact $b_{\mathcal{R}(A)}=Ax_{\star}$, for any nonnegative integer $k$, it follows that
\begin{equation}\label{3-69}
\begin{aligned}
\mathbb{E}\|x_{k}-x_{\star}\|_2^2&\leq \alpha_1^{k_q}
\mathbb{E}\|x_{k_p}-x_{\star}\|_2^2+\frac{1}{t}\left(\frac{1+\triangle }{1-\triangle }+1\right)\beta_1^{k_p}\sum_{l=0}^{k_q-1}\tilde{\alpha}^{l}\|b_{\mathcal{R}(A)}\|_2^2\\
&\leq\alpha_1^{k_q}\left( \alpha_1^{k_p}\|x_{\star}\|_2^2+\frac{1}{t(1-\alpha_1)}\left(\frac{1+\triangle }{1-\triangle }+1\right)\|b_{\mathcal{R}(A)}\|_2^2\right)\\
&\quad+\beta_1^{k_p}\frac{1}{t(1-\tilde{\alpha})}\left(\frac{1+\triangle }{1-\triangle }+1\right)\|b_{\mathcal{R}(A)}\|_2^2\\
&=\alpha_1^{k}\|x_{\star}\|_2^2+(\alpha_1^{k_q}+\beta_1^{k_p})\frac{1}{t(1-\alpha_1)}\left(\frac{1+\triangle }{1-\triangle }+1\right)\|b_{\mathcal{R}(A)}\|_2^2\\
&\leq\alpha_1^{k}\|x_{\star}\|_2^2+(\alpha_1^{k_q}+\beta_1^{k_p})\frac{\lambda_{min}(A^*A)}{t(1-\alpha_1)}\left(\frac{1+\triangle }{1-\triangle }+1\right)\|x_{\star}\|_2^2\\
&\leq(\max\{\alpha_1,\beta_1\})^{k_p}\left(1+\frac{2\lambda_{min}(A^*A)}{t(1-\alpha_1)}\left(\frac{1+\triangle }{1-\triangle }+1\right)\right)\|x_{\star}\|_2^2.
\end{aligned}
\end{equation}
The proof is completed.

\end{proof}

\subsection{Two-dimensional semi-randomized extended Kaczmarz method}\label{subsection-3-4}
As seen, the TGREK solver is still constructing the index sets and computing probabilities during iterations, which is time-consuming. To overcome this issue, we propose a two-dimensional semi-random extended Kaczmarz (TSREK) method, built upon selecting the two active rows and columns corresponding to the current largest and the second largest homogeneous residuals, for solving the least-squares problem \eqref{1-1}.
\vspace{1em}
\begin{breakablealgorithm}
	\caption{Two-dimensional semi-randomized extended Kaczmarz method}
	\label{algo-3-9}
	\begin{algorithmic}[1]
        \REQUIRE $A, b,z_0= b$\\
        \ENSURE $x_{k+1}$, $z_{k+1}$\\
        \STATE
       For $k=0, 1, 2,\cdots $, until convergence, do:\\
            \STATE Select $i_{k_1}\in\{1,2,\dots,m\} $ satisfying $$\frac{|{b}^{(i_{k_1})}-z_{k}^{(i_{k_1})}-A^{(i_{k_1})}x_{k}|} {\| A^{(i_{k_1})}\|_{2} }=\max_{1\leq i \leq m}\frac{|{b}^{(i)}-z_{k}^{(i)}-A^{(i)}x_{k}|} {\| A^{(i)}\|_{2} }.$$
            \STATE Select $i_{k_2}\in\{1,2,\dots,m\} $ satisfying $$\frac{|{b}^{(i_{k_2})}-z_{k}^{(i_{k_2})}-A^{(i_{k_2})}x_{k}|} {\| A^{(i_{k_2})}\|_{2} }=\max_{1\leq i \leq m,i\neq i_{k_1}}\frac{|{b}^{(i)}-z_{k}^{(i)}-A^{(i)}x_{k}|} {\| A^{(i)}\|_{2} }.$$
            \STATE Select $j_{k_1}\in\{1,2,\dots,n\} $ satisfying $$\frac{|A_{(j_{k_1})}^*z_k|}{\|A_{(j_{k_1})}\|_2}=\max_{1\leq j \leq n}\frac{|A_{(j)}^*z_k|}{\|A_{(j)}\|_2}.$$
            \STATE Select $j_{k_2}\in\{1,2,\dots,n\} $ satisfying $$\frac{|A_{(j_{k_2})}^*z_k|}{\|A_{(j_{k_2})}\|_2}=\max_{1\leq j \leq n,j\neq j_{k_1}}\frac{|A_{(j)}^*z_k|}{\|A_{(j)}\|_2}.$$
     \STATE Update $x_{k+1}$ as the line $7$ of Algorithm \ref{algo-3-5}.

     \STATE Update $z_{k+1}$ as the line $8$ of Algorithm \ref{algo-3-5}.

	\end{algorithmic}  
\end{breakablealgorithm}
\vspace{1em}

Note that, if the linear system is consistent, then $z_k$ is not required, such that Algorithm \ref{algo-3-9} will reduce to the following two-dimensional semi-randomized Kaczmarz (TSRK) method.
\vspace{1em}
\begin{breakablealgorithm}
	\caption{Two-dimensional semi-randomized Kaczmarz method}
	\label{algo-3-10}
	\begin{algorithmic}[1]
        \REQUIRE $A, b$\\
        \ENSURE $x_{k+1}$\\
        \STATE
       For $k=0, 1, 2,\cdots $, until convergence, do:\\
            \STATE Select $i_{k_1}\in\{1,2,\dots,m\} $ satisfying $$\frac{|{b}^{(i_{k_1})}-A^{(i_{k_1})}x_{k}|} {\| A^{(i_{k_1})}\|_{2} }=\max_{1\leq i \leq m}\frac{|{b}^{(i)}-A^{(i)}x_{k}|} {\| A^{(i)}\|_{2} }.$$
            \STATE Select $i_{k_2}\in\{1,2,\dots,m\} $ satisfying $$\frac{|{b}^{(i_{k_2})}-A^{(i_{k_2})}x_{k}|} {\| A^{(i_{k_2})}\|_{2} }=\max_{1\leq i \leq m,i\neq i_{k_1}}\frac{|{b}^{(i)}-A^{(i)}x_{k}|} {\| A^{(i)}\|_{2} }.$$
     \STATE Update $x_{k+1}$ as the line $5$ of Algorithm \ref{algo-3-6}.

	\end{algorithmic}  
\end{breakablealgorithm}
\vspace{1em}

To analyze the convergence of TSREK, we first provide the following lemma, and refer to Appendix \ref{App-3} for its proof.
\begin{lemma}\label{lem-3}
{\rm Let $x_{\star}=A^{\dagger}b$ be the solution of consistent linear systems $Ax=b$. Then the iterative sequence $\{x_{k}\}$ generated by Algorithm \ref{algo-3-10} converges to $x_{\star }$ for any initial vector $x_0$ in expectation. Moreover, the corresponding error norm in expectation yields
\begin{equation}\label{3-70}
\mathbb{E}_k\|x_{k+1}-x_{\star}\|_2^2\leq \left(1-\frac{\lambda_{min}(A^*A)}{\tau_{max}}-\frac{\lambda_{min}(A^*A)}{\tau_{max}}\frac{\omega}{c^2(1-\delta^2)}\right)\|x_k-x_{\star}\|_2^2.    
\end{equation}
where $c$ and $\omega$ are defined in \eqref{5-1}.}
\end{lemma}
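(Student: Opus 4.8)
The plan is to read one step of TSRK geometrically. By line~5 of Algorithm~\ref{algo-3-6}, which line~3 of Algorithm~\ref{algo-3-10} invokes, $x_{k+1}$ is the orthogonal projection of $x_k$ onto the affine subspace $L_k:=\mathcal{H}_{i_{k_1}}\cap\mathcal{H}_{i_{k_2}}$, where $\mathcal{H}_{i_{k_s}}:=\{x:\langle A^{(i_{k_s})},x\rangle=b^{(i_{k_s})}\}$. Consistency of $Ax=b$ gives $x_\star\in L_k$, so the Pythagorean identity yields
\[
\|x_{k+1}-x_\star\|_2^2=\|x_k-x_\star\|_2^2-\|x_k-x_{k+1}\|_2^2 ,
\]
and since the indices $i_{k_1},i_{k_2}$ are determined by the first $k$ iterations (so that $\mathbb{E}_k$ acts as the identity here) the whole task reduces to a sharp lower bound on the step length $\|x_k-x_{k+1}\|_2^2$.

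First I would split this step through the single-row iterate $\hat{x}_k:=x_k+\frac{r_k^{(i_{k_1})}}{\|A^{(i_{k_1})}\|_2^2}(A^{(i_{k_1})})^{*}$, with $r_k:=b-Ax_k$. Since $L_k\subseteq\mathcal{H}_{i_{k_1}}$, composition of orthogonal projections onto nested affine subspaces gives $x_{k+1}=P_{L_k}(\hat{x}_k)$; moreover $x_k-\hat{x}_k$ is a multiple of $(A^{(i_{k_1})})^{*}$ while $\hat{x}_k-x_{k+1}$ lies in the direction space of $\mathcal{H}_{i_{k_1}}$, so the two increments are orthogonal and
\[
\|x_k-x_{k+1}\|_2^2=\frac{|r_k^{(i_{k_1})}|^2}{\|A^{(i_{k_1})}\|_2^2}+\frac{|b^{(i_{k_2})}-A^{(i_{k_2})}\hat{x}_k|^2}{\|A^{(i_{k_2})}\|_2^2\,(1-\mu_k^2)} ,
\]
where $\mu_k=|A^{(i_{k_1})}(A^{(i_{k_2})})^{*}|/(\|A^{(i_{k_1})}\|_2\|A^{(i_{k_2})}\|_2)\in[\delta,\Delta]$. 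For the first summand I would argue as in the proof of Theorem~\ref{The-3}: it equals $\max_{i\in[m]}|r_k^{(i)}|^2/\|A^{(i)}\|_2^2$, and because $r_k$ vanishes on a row selected at step $k-1$ the maximum dominates a suitably weighted average, so $|r_k^{(i_{k_1})}|^2/\|A^{(i_{k_1})}\|_2^2\ge\|r_k\|_2^2/\tau_{max}\ge(\lambda_{min}(A^*A)/\tau_{max})\|x_k-x_\star\|_2^2$, using $\|r_k\|_2^2=\|A(x_k-x_\star)\|_2^2$ together with $x_k-x_\star\in\mathcal{R}(A^*)$. This already contributes the term $1-\lambda_{min}(A^*A)/\tau_{max}$ of \eqref{3-70}.

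The remaining work is to produce the bonus term $\frac{\lambda_{min}(A^*A)}{\tau_{max}}\,\frac{\omega}{c^2(1-\delta^2)}$ out of the second summand. Here one writes $b^{(i_{k_2})}-A^{(i_{k_2})}\hat{x}_k=r_k^{(i_{k_2})}-\frac{r_k^{(i_{k_1})}}{\|A^{(i_{k_1})}\|_2^2}A^{(i_{k_2})}(A^{(i_{k_1})})^{*}$, uses $\mu_k\ge\delta$ from \eqref{2-2} and the bound $c$ of \eqref{5-1} on the row norms to estimate the denominator $\|A^{(i_{k_2})}\|_2^2(1-\mu_k^2)\le c^2(1-\delta^2)$, and invokes the constant $\omega$ of \eqref{5-1} as the guaranteed lower bound on the post-projection residual $|b^{(i_{k_2})}-A^{(i_{k_2})}\hat{x}_k|^2$, measured relative to $\|x_k-x_\star\|_2^2$ through the size of the first step. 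Substituting both summand bounds into the Pythagorean identity proves \eqref{3-70} (the contraction factor lies in $[0,1)$ because the step length is nonnegative and bounded below by the first summand), and iterating the bound, or equivalently passing to total expectations, gives the claimed linear convergence to $x_\star$. I expect the genuine difficulty to sit precisely in the estimate for $|b^{(i_{k_2})}-A^{(i_{k_2})}\hat{x}_k|^2$: because $i_{k_1}$ carries the largest and $i_{k_2}$ the second largest normalized residual, $r_k^{(i_{k_2})}$ and its correction term can nearly cancel when the two selected rows are strongly correlated, so a plain triangle-inequality bound (whose right-hand side may turn negative) is useless, and one must use the structure forced by \eqref{2-2} and the definition of $\omega$ in \eqref{5-1} to certify that a fixed fraction of the post-projection residual of row $i_{k_2}$ always survives.
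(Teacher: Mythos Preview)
Your skeleton is exactly the paper's: Pythagorean identity \eqref{5-12}, then the two-stage split of the step into the one-row drop $|v_{k_1}|^2$ (your first summand) and a cross term (your second summand), with the first summand handled via \eqref{3-20}/\eqref{5-19}. So the approach is right.

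The gap is that you have misread what $c$ and $\omega$ are. The reference ``\eqref{5-1}'' in the lemma is a forward pointer into the proof; there the paper sets
\[
v_{k_s}=\frac{|b^{(i_{k_s})}-A^{(i_{k_s})}x_k|}{\|A^{(i_{k_s})}\|_2},\qquad c:=\frac{v_{k_1}}{v_{k_2}}>1,\qquad \omega:=\min|1-c\,|\mu_k||^2 .
\]
So $c$ is \emph{not} a bound on row norms, and your inequality $\|A^{(i_{k_2})}\|_2^2(1-\mu_k^2)\le c^2(1-\delta^2)$ is neither true nor what is needed. With the correct definitions your second summand becomes
\[
\frac{|b^{(i_{k_2})}-A^{(i_{k_2})}\hat x_k|^2}{\|A^{(i_{k_2})}\|_2^2(1-|\mu_k|^2)}
=\frac{|v_{k_2}-|\mu_k|\,v_{k_1}|^2}{1-|\mu_k|^2}
=\frac{|1-c\,|\mu_k||^2}{c^2(1-|\mu_k|^2)}\,v_{k_1}^2
\ge\frac{\omega}{c^2(1-\delta^2)}\,v_{k_1}^2,
\]
using $|\mu_k|\ge\delta$ only through $1-|\mu_k|^2\le1-\delta^2$; the $\|A^{(i_{k_2})}\|_2^2$ simply cancels. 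Combining with $v_{k_1}^2\ge\frac{\lambda_{min}(A^*A)}{\tau_{max}}\|x_k-x_\star\|_2^2$ gives \eqref{3-70}. In other words, the ``genuine difficulty'' you anticipated in controlling the cancellation between $r_k^{(i_{k_2})}$ and its correction is exactly what the paper hides inside the constant $\omega$; it is not resolved analytically but absorbed into that (iteration-dependent) quantity.
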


From Lemma \ref{lem-3}, we give the following theorem for guaranteeing the convergence of Algorithm TSREK.

\begin{theorem}\label{The-8}
{\rm Let $x_{\star}=A^{\dagger}b$ be the least-norm solution of the least-squares
problem \eqref{1-1}. Then the iterative sequence $\{x_{k}\}$ generated by Algorithm \ref{algo-3-9} converges to $x_{\star }$ for any initial vector $x_0$ in expectation. Moreover, the corresponding error norm in expectation yields
\begin{equation}\label{3-71}
\mathbb{E}\|x_{k}-x_{\star}\|_2^2\leq(\max\{\tilde{\alpha}_1,\tilde{\beta}_1\})^{\left \lfloor \frac{k}{2} \right \rfloor }\left(1+\frac{2\lambda_{min}(A^*A)}{t(1-\tilde{\alpha}_1)}\left(\frac{1+\triangle }{1-\triangle }+1\right)\right)\|x_{\star}\|_2^2
\end{equation}
where $$\tilde{\alpha}_1=1-\frac{\lambda_{min}(A^*A)}{\tau_{max}}-\frac{\lambda_{min}(A^*A)}{\tau_{max}}\frac{\omega}{c^2(1-\delta^2)}$$
and
$$\tilde{\beta}_1=1-\frac{\lambda_{min}(A^*A)}{\tilde{\tau}_{max}}-\frac{\lambda_{min}(A^*A)}{\tilde{\tau}_{max}}\frac{\tilde{\omega}}{\tilde{c}^2(1-\tilde{\delta}^2)}$$}
\end{theorem}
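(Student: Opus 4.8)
The plan is to repeat the proof of Theorem~\ref{The-7} essentially verbatim, substituting the TSRK contraction factor of Lemma~\ref{lem-3} for the TGRK one. First I would introduce the auxiliary iterate $\tilde{x}_{k+1}$ obtained by applying one TSRK step (Algorithm~\ref{algo-3-10}) to $x_k$ for the \emph{consistent} system $Ax=b_{\mathcal{R}(A)}$, i.e.\ by replacing each residual entry $b^{(i_{k_r})}-z_k^{(i_{k_r})}-A^{(i_{k_r})}x_k$ in the two-row update by $b_{\mathcal{R}(A)}^{(i_{k_r})}-A^{(i_{k_r})}x_k$, $r=1,2$. Exactly as in the derivation of Eq.~\eqref{3-38}, the vector $x_{k+1}-\tilde{x}_{k+1}$ lies in $\mathrm{span}\{(A^{(i_{k_1})})^*,(A^{(i_{k_2})})^*\}$ while $\tilde{x}_{k+1}-x_\star$ is orthogonal to both $(A^{(i_{k_1})})^*$ and $(A^{(i_{k_2})})^*$, so $(x_{k+1}-\tilde{x}_{k+1})^*(\tilde{x}_{k+1}-x_\star)=0$ and therefore
\[
\mathbb{E}_k\|x_{k+1}-x_\star\|_2^2=\mathbb{E}_k\|\tilde{x}_{k+1}-x_\star\|_2^2+\mathbb{E}_k\|x_{k+1}-\tilde{x}_{k+1}\|_2^2 .
\]

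Next I would bound the two terms. For the first, Lemma~\ref{lem-3} together with $x_k\in\mathcal{R}(A^*)$ gives $\mathbb{E}_k\|\tilde{x}_{k+1}-x_\star\|_2^2\leq\tilde{\alpha}_1\|x_k-x_\star\|_2^2$, since $\tilde{x}_{k+1}$ is precisely one TSRK iterate for $Ax=b_{\mathcal{R}(A)}$. For the second, I would transplant the estimate already proved for TGREK: the expansion \eqref{3-61} of $\|x_{k+1}-\tilde{x}_{k+1}\|_2^2$ and the inequalities \eqref{3-62}--\eqref{3-63} use only the algebraic form of the two-dimensional projection and the selection-independent bounds $|w^{(i)}|^2/\|A^{(i)}\|_2^2\leq\|b_{\mathcal{R}(A)^\bot}-z_k\|_2^2/t$ and $(|\xi|+|\eta|)^2=\frac{1+|\mu_k|}{1-|\mu_k|}\leq\frac{1+\triangle}{1-\triangle}$ (the latter valid because $i_{k_2}\neq i_{k_1}$). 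Hence \eqref{3-64} carries over unchanged, and combining the two bounds yields the recursion
\[
\mathbb{E}_k\|x_{k+1}-x_\star\|_2^2\leq\tilde{\alpha}_1\|x_k-x_\star\|_2^2+\frac{1}{t}\left(\frac{1+\triangle}{1-\triangle}+1\right)\|b_{\mathcal{R}(A)^\bot}-z_k\|_2^2 .
\]

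Then I would observe that the $z$-iteration of Algorithm~\ref{algo-3-9} is exactly one TSRK step applied to the consistent system $A^*z=0$ (least-norm solution $0$); from $z_0-b_{\mathcal{R}(A)^\bot}=b_{\mathcal{R}(A)}\in\mathcal{R}(A)$ and Lemma~\ref{lem-3} applied with the column quantities $\tilde{c},\tilde{\omega},\tilde{\delta},\tilde{\tau}_{max}$ in place of the row ones, one obtains $\mathbb{E}\|z_l-b_{\mathcal{R}(A)^\bot}\|_2^2\leq\tilde{\beta}_1^{\,l}\|b_{\mathcal{R}(A)}\|_2^2\leq\|b_{\mathcal{R}(A)}\|_2^2$ for all $l\geq0$. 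Finally, following \eqref{3-67}--\eqref{3-69} word for word, I would set $k_p=\lfloor k/2\rfloor$, $k_q=k-k_p$, iterate the recursion from $0$ to $k_p$ summing the geometric tail $\sum_{l\geq0}\tilde{\alpha}_1^{\,l}=1/(1-\tilde{\alpha}_1)$, then propagate from $k_p$ to $k$ using $\|b_{\mathcal{R}(A)}\|_2=\|Ax_\star\|_2$ and the corresponding eigenvalue bound; this gives \eqref{3-71} with the factor $(\max\{\tilde{\alpha}_1,\tilde{\beta}_1\})^{\lfloor k/2\rfloor}$ and the stated constant.

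The only genuinely new ingredient is Lemma~\ref{lem-3} (proved in Appendix~\ref{App-3}); everything in Theorem~\ref{The-8} is a recombination of it with estimates already established for TGREK. I therefore expect the real difficulty to lie entirely in Lemma~\ref{lem-3}: showing that deterministically selecting the two largest homogeneous residuals produces the extra decrement $-\frac{\lambda_{min}(A^*A)}{\tau_{max}}\frac{\omega}{c^2(1-\delta^2)}$ requires quantifying the additional progress made by the second selected row beyond the first, which amounts to controlling the off-diagonal (cross) term via $\delta$ and the constants $c,\omega$ of \eqref{5-1}. Within the theorem proper the only point needing a little care is checking that the orthogonality $(x_{k+1}-\tilde{x}_{k+1})^*(\tilde{x}_{k+1}-x_\star)=0$ still holds in the degenerate branch where the two chosen rows are parallel; there the update reduces to a one-dimensional Kaczmarz projection and the claim is immediate.
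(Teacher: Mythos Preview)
Your proposal is correct and matches the paper's approach exactly: the paper's own proof simply reads ``The proof is similar to Theorem~\ref{The-7}, and hence we omit it.'' Your outline of transplanting the argument of Theorem~\ref{The-7} with Lemma~\ref{lem-3} in place of Lemma~\ref{lem-2} (and the column analogue for the $z$-iteration) is precisely what is intended, and you have identified the relevant equations~\eqref{3-61}--\eqref{3-69} that carry over unchanged.
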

\begin{proof}
The proof is similar to Theorem \ref{The-7}, and hence we omit it.
\end{proof}

From Algorithm \ref{algo-3-9}, we can obtain that TSREK still requires the whole coefficient matrix to select working rows and columns, even though it avoids computing probabilities and constructing index sets. This is unfavorable for big data problems. Therefore, we introduce a two-dimensional semi-randomized extended Kaczmarz method with simple random sampling (TSREKS), which is powerful for big data problems.

\vspace{1em}
\begin{breakablealgorithm}
	\caption{Two-dimensional semi-randomized extended Kaczmarz method with simple random sampling}
	\label{algo-3-11}
	\begin{algorithmic}[1]
        \REQUIRE $A, b,z_0= b$\\
        \ENSURE $x_{k+1}$, $z_{k+1}$\\
        \STATE
       For $k=0, 1, 2,\cdots $, until convergence, do:\\
            \STATE
            Generate two indicator sets $\Phi_k$ and $\tilde{\Phi}_k$, i.e., choosing $\eta m$ rows and $\eta n$ columns of $A$ by using the simple random sampling, where $0<\eta<1$.\\
            \STATE Select $i_{k_1}\in\Phi_k $ satisfying $$\frac{|{b}^{(i_{k_1})}-z_{k}^{(i_{k_1})}-A^{(i_{k_1})}x_{k}|} {\| A^{(i_{k_1})}\|_{2} }=\max_{i\in\Phi_k}\frac{|{b}^{(i)}-z_{k}^{(i)}-A^{(i)}x_{k}|} {\| A^{(i)}\|_{2} }.$$
            \STATE Select $i_{k_2}\in\Phi_k $ satisfying $$\frac{|{b}^{(i_{k_2})}-z_{k}^{(i_{k_2})}-A^{(i_{k_2})}x_{k}|} {\| A^{(i_{k_2})}\|_{2} }=\max_{i\in\Phi_k,i\neq i_{k_1}}\frac{|{b}^{(i)}-z_{k}^{(i)}-A^{(i)}x_{k}|} {\| A^{(i)}\|_{2} }.$$
            \STATE Select $j_{k_1}\in\tilde{\Phi}_k $ satisfying $$\frac{|A_{(j_{k_1})}^*z_k|}{\|A_{(j_{k_1})}\|_2}=\max_{j\in\tilde{\Phi}_k}\frac{|A_{(j)}^*z_k|}{\|A_{(j)}\|_2}.$$
            \STATE Select $j_{k_2}\in\tilde{\Phi}_k $ satisfying $$\frac{|A_{(j_{k_2})}^*z_k|}{\|A_{(j_{k_2})}\|_2}=\max_{j\in\tilde{\Phi}_k,j\neq j_{k_1}}\frac{|A_{(j)}^*z_k|}{\|A_{(j)}\|_2}.$$
     \STATE Update $x_{k+1}$ as the line $7$ of Algorithm \ref{algo-3-5}.

     \STATE Update $z_{k+1}$ as the line $8$ of Algorithm \ref{algo-3-5}.

	\end{algorithmic}  
\end{breakablealgorithm}
\vspace{1em}

Note that, if the linear system is consistent, then $z_k$ is not required, such that Algorithm \ref{algo-3-11} will reduce to the following two-dimensional semi-randomized extended Kaczmarz method with simple random sampling (TSRKS).

\begin{algorithm}
	\caption{Two-dimensional semi-randomized
    Kaczmarz method with simple random sampling}
	\label{algo-3-12}
	\begin{algorithmic}[1]
        \REQUIRE $A, b$\\
        \ENSURE $x_{k+1}$\\
        \STATE
       For $k=0, 1, 2,\cdots $, until convergence, do:\\
            \STATE
            Generate an indicator set $\Phi_k$, i.e., choosing $\eta m$ rows of $A$ by using the simple random sampling, where $0<\eta<1$.\\
            \STATE Select $i_{k_1}\in\Phi_k $ satisfying $$\frac{|{b}^{(i_{k_1})}-A^{(i_{k_1})}x_{k}|} {\| A^{(i_{k_1})}\|_{2} }=\max_{i\in\Phi_k}\frac{|{b}^{(i)}-A^{(i)}x_{k}|} {\| A^{(i)}\|_{2} }.$$
            \STATE Select $i_{k_2}\in\Phi_k $ satisfying $$\frac{|{b}^{(i_{k_2})}-A^{(i_{k_2})}x_{k}|} {\| A^{(i_{k_2})}\|_{2} }=\max_{i\in\Phi_k,i\neq i_{k_1}}\frac{|{b}^{(i)}-A^{(i)}x_{k}|} {\| A^{(i)}\|_{2} }.$$

     \STATE Update $x_{k+1}$ as the line $5$ of Algorithm \ref{algo-3-6}.

	\end{algorithmic}  
\end{algorithm}

To analyze the convergence of TSREKS, we first provide the following lemma, and refer to Appendix \ref{App-4} for its proof.

\begin{lemma}\label{lem-4}
{\rm Let $x_{\star}=A^{\dagger}b$ be the solution of consistent linear systems $Ax=b$. Then the iterative sequence $\{x_{k}\}$ generated by Algorithm \ref{algo-3-12} converges to $x_{\star }$ for any initial vector $x_0$ in expectation. Moreover, the corresponding error norm in expectation yields}
\begin{equation}\label{3-72}
\begin{aligned}
&\mathbb{E}_k\|x_{k+1}-x_{\star}\|_2^2\\
&\quad\leq \left(1-\frac{1-\varepsilon_{k_3}}{1+\tilde{\varepsilon}_{k_3}}\frac{\lambda_{min}(A^*A)}{\tau_{max}}-\frac{1-\varepsilon_{k_3}}{1+\tilde{\varepsilon}_{k_3}}\frac{\lambda_{min}(A^*A)}{\tau_{max}}\frac{\omega_1}{c_1^2(1-\delta^2)}\right)\|x_k-x_{\star}\|_2^2.   
\end{aligned}
\end{equation}
\end{lemma}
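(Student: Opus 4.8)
The plan is to follow the proof of Lemma~\ref{lem-3} (the convergence of TSRK), but with every exact sum over $[m]$ replaced by the corresponding partial sum over the random sample $\Phi_k$, and the discrepancy between the two controlled by Chebyshev's (weak) law of large numbers, Theorem~\ref{The-5}. Since $Ax=b$ is consistent we write $b=Ax_{\star}$ with $x_{\star}=A^{\dagger}b$, and, exactly as in Lemma~\ref{lem-3}, I would first reduce to the case $x_k-x_{\star}\in\mathcal{R}(A^{*})$: every increment produced by Algorithm~\ref{algo-3-12} lies in $\mathrm{span}\{(A^{(i_{k_1})})^{*},(A^{(i_{k_2})})^{*}\}\subseteq\mathcal{R}(A^{*})$, so the component of $x_0$ orthogonal to $\mathcal{R}(A^{*})$ is frozen and cancels on both sides of \eqref{3-72}; consequently $\|A(x_k-x_{\star})\|_2^2\ge\lambda_{min}(A^*A)\|x_k-x_{\star}\|_2^2$ may be used freely.

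Writing $r_k^{(i)}=b^{(i)}-A^{(i)}x_k$, the update of Algorithm~\ref{algo-3-12} sends $x_k$ to its orthogonal projection $x_{k+1}$ onto $\mathcal{H}_{i_{k_1}}\cap\mathcal{H}_{i_{k_2}}$, where $i_{k_1},i_{k_2}$ index the two largest values of $|r_k^{(i)}|/\|A^{(i)}\|_2$ over $i\in\Phi_k$. Splitting the error decrease, exactly as in the proof of Lemma~\ref{lem-3}, into the contribution of the first hyperplane plus the correction from the second --- the latter governed by the coherence bound $\delta$ of \eqref{2-2} together with the auxiliary scalars $c_1,\omega_1$ (the sampled analogues of $c,\omega$ of \eqref{5-1}) --- gives
\[
\|x_k-x_{\star}\|_2^2-\|x_{k+1}-x_{\star}\|_2^2\ \ge\ \left(1+\frac{\omega_1}{c_1^2(1-\delta^2)}\right)\max_{i\in\Phi_k}\frac{|r_k^{(i)}|^2}{\|A^{(i)}\|_2^2}.
\]
I would then bound the sampled maximum below by the $\|A^{(i)}\|_2^2$-weighted sampled average and discard from its denominator the previously used indices (whose residual entries vanish), so that the relevant population sum of squared row norms is at most $\tau_{max}$; applying Theorem~\ref{The-5} with $\eta m$ large yields small $\varepsilon_{k_3},\tilde\varepsilon_{k_3}>0$ with $\sum_{i\in\Phi_k}|r_k^{(i)}|^2\ge(1-\varepsilon_{k_3})\,\eta\,\|b-Ax_k\|_2^2$ and $\sum_{i\in\Phi_k}\|A^{(i)}\|_2^2\le(1+\tilde\varepsilon_{k_3})\,\eta\,\tau_{max}$, hence
\[
\max_{i\in\Phi_k}\frac{|r_k^{(i)}|^2}{\|A^{(i)}\|_2^2}\ \ge\ \frac{1-\varepsilon_{k_3}}{1+\tilde\varepsilon_{k_3}}\,\frac{\lambda_{min}(A^*A)}{\tau_{max}}\,\|x_k-x_{\star}\|_2^2.
\]
Substituting this into the previous display, taking $\mathbb{E}_k[\,\cdot\,]$ and rearranging gives \eqref{3-72}.

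The principal obstacle is making the Chebyshev step rigorous and propagating the constants $\varepsilon_{k_3},\tilde\varepsilon_{k_3}$ consistently through both terms of \eqref{3-72}. One must check the variance-to-size hypothesis of Theorem~\ref{The-5} for the two ``populations'' $\{|r_k^{(i)}|^2\}_{i\in[m]}$ and $\{\|A^{(i)}\|_2^2\}_{i\in[m]}$ sampled by $\Phi_k$, so that their partial sums concentrate; and --- more delicately --- one must show that the second-largest sampled homogeneous residual, not merely the largest, still satisfies a lower bound of the same order, so that the $\omega_1/(c_1^2(1-\delta^2))$ correction survives with the same $(1-\varepsilon_{k_3})/(1+\tilde\varepsilon_{k_3})$ prefactor. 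This is precisely the place where the bookkeeping with $c_1$, $\omega_1$ and $\delta$ mirrors --- but is heavier than --- the corresponding estimate in Lemma~\ref{lem-3}.
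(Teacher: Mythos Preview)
Your proposal is correct and follows essentially the same route as the paper's proof: the Pythagorean decrement gives $\|x_k-x_\star\|_2^2-\|x_{k+1}-x_\star\|_2^2\ge\bigl(1+\omega_1/(c_1^2(1-\delta^2))\bigr)|v_{k_1}|^2$ with $|v_{k_1}|^2=\max_{i\in\Phi_k}|r_k^{(i)}|^2/\|A^{(i)}\|_2^2$, then $|v_{k_1}|^2$ is bounded below by the ratio $\sum_{i\in\Phi_k}|r_k^{(i)}|^2\big/\sum_{i\in\Phi_k}\|A^{(i)}\|_2^2$, and Chebyshev's law converts these sample sums to population sums, producing the $(1-\varepsilon_{k_3})/(1+\tilde\varepsilon_{k_3})$ prefactor. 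Your closing worry about separately bounding the second-largest sampled residual is unnecessary: once you write $v_{k_2}=v_{k_1}/c_1$ the correction term is already expressed purely through $v_{k_1}^2$ (exactly as in your own first display and in the paper's \eqref{5-23}), so the single lower bound on $|v_{k_1}|^2$ handles both summands simultaneously.
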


From Lemma \ref{lem-4}, we give the following theorem for guaranteeing the convergence of Algorithm TSREKS.

\begin{theorem}\label{The-9}
{\rm Let $x_{\star}=A^{\dagger}b$ be the  least-norm solution of the least-squares
problem \eqref{1-1}. Then the iterative sequence $\{x_{k}\}$ generated by Algorithm \ref{algo-3-11} converges to $x_{\star }$ for any initial vector $x_0$ in expectation. Moreover, the corresponding error norm in expectation yields

\begin{equation}\label{3-73}
\mathbb{E}\|x_{k}-x_{\star}\|_2^2\leq(\max\{\tilde{\alpha}_2,\tilde{\beta}_2\})^{\left \lfloor \frac{k}{2} \right \rfloor }\left(1+\frac{2\lambda_{min}(A^*A)}{t(1-\tilde{\alpha}_2)}\left(\frac{1+\triangle }{1-\triangle }+1\right)\right)\|x_{\star}\|_2^2
\end{equation}
where $$\tilde{\alpha}_2=1-\frac{1-\varepsilon_{k_3}}{1+\tilde{\varepsilon}_{k_3}}\frac{\lambda_{min}(A^*A)}{\tau_{max}}-\frac{1-\varepsilon_{k_3}}{1+\tilde{\varepsilon}_{k_3}}\frac{\lambda_{min}(A^*A)}{\tau_{max}}\frac{\omega_1}{c_1^2(1-\delta^2)}$$
and
$$\tilde{\beta}_2=1-\frac{1-\overline{\varepsilon_{k_3}}}{1+\overline{\tilde{\varepsilon}_{k_3}}}\frac{\lambda_{min}(A^*A)}{\tilde{\tau}_{max}}-\frac{1-\overline{\varepsilon_{k_3}}}{1+\overline{\tilde{\varepsilon}_{k_3}}}\frac{\lambda_{min}(A^*A)}{\tilde{\tau}_{max}}\frac{\tilde{\omega}_1}{\tilde{c}_1^2(1-\tilde{\delta}^2)}$$}
\end{theorem}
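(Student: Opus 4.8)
The plan is to transcribe the proof of Theorem~\ref{The-7} almost verbatim, replacing each appeal to Lemma~\ref{lem-2} (the one-step contraction for TGRK) by the corresponding appeal to Lemma~\ref{lem-4} (the one-step contraction for TSRKS). As in \eqref{3-38}, introduce the auxiliary ``noiseless'' iterate $\tilde{x}_{k+1}$: the orthogonal projection of $x_k$ onto $\mathcal{H}_{i_{k_1}}\cap\mathcal{H}_{i_{k_2}}$ associated with the consistent system $Ax=b_{\mathcal{R}(A)}$, built from the very rows $i_{k_1},i_{k_2}$ that TSREKS picks at step $k$. Since $x_\star$ lies in both hyperplanes, $x_{k+1}-\tilde{x}_{k+1}\perp\tilde{x}_{k+1}-x_\star$, so the Pythagorean splitting $\mathbb{E}_k\|x_{k+1}-x_\star\|_2^2=\mathbb{E}_k\|\tilde{x}_{k+1}-x_\star\|_2^2+\mathbb{E}_k\|x_{k+1}-\tilde{x}_{k+1}\|_2^2$ is valid.

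For the first term, $\tilde{x}_{k+1}$ is one TSRKS step on $Ax=b_{\mathcal{R}(A)}$ started from $x_k\in\mathcal{R}(A^*)$, so Lemma~\ref{lem-4} gives $\mathbb{E}_k\|\tilde{x}_{k+1}-x_\star\|_2^2\le\tilde{\alpha}_2\|x_k-x_\star\|_2^2$. For the second term I reuse the estimate \eqref{3-41}--\eqref{3-44} unchanged: with $s_{k_r}=\big(b_{\mathcal{R}(A)^{\bot}}^{(i_{k_r})}-z_k^{(i_{k_r})}\big)/\|A^{(i_{k_r})}\|_2$ and $\xi,\eta$ as in \eqref{3-41}, one has $\|x_{k+1}-\tilde{x}_{k+1}\|_2^2\le|s_{k_2}|^2+\big|\xi s_{k_1}-\eta s_{k_2}\big|^2$; since $|w^{(i)}|^2/\|A^{(i)}\|_2^2\le\|b_{\mathcal{R}(A)^{\bot}}-z_k\|_2^2/t$ for every $i\in[m]$ and $(|\xi|+|\eta|)^2=(1+|\mu_k|)/(1-|\mu_k|)\le(1+\triangle)/(1-\triangle)$, it follows that $\mathbb{E}_k\|x_{k+1}-\tilde{x}_{k+1}\|_2^2\le\frac{1}{t}\big(\frac{1+\triangle}{1-\triangle}+1\big)\|b_{\mathcal{R}(A)^{\bot}}-z_k\|_2^2$. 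Combining, $\mathbb{E}_k\|x_{k+1}-x_\star\|_2^2\le\tilde{\alpha}_2\|x_k-x_\star\|_2^2+\frac{1}{t}\big(\frac{1+\triangle}{1-\triangle}+1\big)\|b_{\mathcal{R}(A)^{\bot}}-z_k\|_2^2$.

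Observe next that the $z$-iteration of TSREKS is precisely TSRKS applied to the consistent system $A^*z=0$ (its residual at column $j$ being $A_{(j)}^*z_k$, the quantity the algorithm ranks within $\tilde{\Phi}_k$), whose least-norm solution is $0$; since $z_0-b_{\mathcal{R}(A)^{\bot}}=b_{\mathcal{R}(A)}\in\mathcal{R}(A)$, Lemma~\ref{lem-4} yields $\mathbb{E}\|z_l-b_{\mathcal{R}(A)^{\bot}}\|_2^2\le\tilde{\beta}_2^{\,l}\|b_{\mathcal{R}(A)}\|_2^2\le\|b_{\mathcal{R}(A)}\|_2^2$ for all $l\ge0$. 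Set $k_p:=\lfloor k/2\rfloor$, $k_q:=k-k_p$. Unrolling the recursion over the first $k_p$ steps and summing the geometric series in $\tilde{\alpha}_2$ gives $\mathbb{E}\|x_{k_p}-x_\star\|_2^2\le\tilde{\alpha}_2^{\,k_p}\|x_\star\|_2^2+\frac{1}{t(1-\tilde{\alpha}_2)}\big(\frac{1+\triangle}{1-\triangle}+1\big)\|b_{\mathcal{R}(A)}\|_2^2$; running the recursion over the remaining $k_q$ steps, using $\mathbb{E}\|z_{k_p+l}-b_{\mathcal{R}(A)^{\bot}}\|_2^2\le\tilde{\beta}_2^{\,k_p}\|b_{\mathcal{R}(A)}\|_2^2$, and finally substituting $b_{\mathcal{R}(A)}=Ax_\star$ so that $\|b_{\mathcal{R}(A)}\|_2^2\le\lambda_{min}(A^*A)\|x_\star\|_2^2$, together with $\tilde{\alpha}_2^{\,k_q}+\tilde{\beta}_2^{\,k_p}\le2(\max\{\tilde{\alpha}_2,\tilde{\beta}_2\})^{k_p}$ and $\tilde{\alpha}_2^{\,k}\le(\max\{\tilde{\alpha}_2,\tilde{\beta}_2\})^{k_p}$, produces exactly \eqref{3-73}.

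The only genuinely delicate point — the same one passed over in the proofs of Theorems~\ref{The-2}, \ref{The-4} and \ref{The-7} — is the step $\mathbb{E}_k\|\tilde{x}_{k+1}-x_\star\|_2^2\le\tilde{\alpha}_2\|x_k-x_\star\|_2^2$: TSREKS ranks the rows inside $\Phi_k$ by the \emph{contaminated} residual $|b^{(i)}-z_k^{(i)}-A^{(i)}x_k|/\|A^{(i)}\|_2$ rather than by the noiseless residual $|b_{\mathcal{R}(A)}^{(i)}-A^{(i)}x_k|/\|A^{(i)}\|_2$ that Lemma~\ref{lem-4} presumes, so one must argue, via the decomposition $b^{(i)}-z_k^{(i)}-A^{(i)}x_k=(b_{\mathcal{R}(A)}^{(i)}-A^{(i)}x_k)+(b_{\mathcal{R}(A)^{\bot}}^{(i)}-z_k^{(i)})$ and its interplay with the sampling set $\Phi_k$, that $\tilde{x}_{k+1}$ built from these rows still contracts as in Lemma~\ref{lem-4}; this is the one place a reader should slow down, the rest being a mechanical copy of Theorem~\ref{The-7}.
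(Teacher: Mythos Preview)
Your proposal is exactly what the paper does: its entire proof of Theorem~\ref{The-9} is the single line ``The proof is similar to Theorem~\ref{The-7}, hence we omit it,'' and you have carried out precisely that transcription, swapping Lemma~\ref{lem-2} for Lemma~\ref{lem-4} and $(\alpha_1,\beta_1)$ for $(\tilde{\alpha}_2,\tilde{\beta}_2)$. The caveat you raise in your last paragraph---that the row selection in Algorithm~\ref{algo-3-11} is driven by the perturbed residual $b-z_k-Ax_k$ rather than the clean residual $b_{\mathcal{R}(A)}-Ax_k$ that Lemma~\ref{lem-4} assumes---is a genuine gap, but it is the paper's gap, not yours: the identical sleight of hand appears in the proofs of Theorems~\ref{The-2}, \ref{The-4}, and \ref{The-7}, and the paper never addresses it.
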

\begin{proof}
The proof is similar to Theorem \ref{The-7}, hence we omit it.

\end{proof}

\section{Numerical Experiments}\label{section-4}
\label{sec:experiments}

In this section, we record some numerical results to illustrate the feasibility and validity of the proposed methods compared with some state-of-the-art randomized Kaczmarz extended methods for Eq.\eqref{1-1}. In the first part, we will numerically compare the proposed methods with REK \cite{Zou} and TREK \cite{WWT} when applied to both over- and under-determined random data. In the second part, we present additional numerical results on the X-ray computed tomography problem to further illustrate the effectiveness of the proposed method in comparison with the two solvers mentioned above. Denote by ``IT", the number of iterations, and by ``CPU", the elapsed computing time in seconds. For each method, we report the mean computing time in seconds and the mean number of iterations based on their average values of $5$ repeated tests. Additionally, all experiments have been carried out in MATLAB
2021b on a personal computer with Inter(R) Core(TM) i5-13500HX @2.5GHz and 16.00 GB memory. Unless otherwise stated, we check for convergence every
$ \min(m,n)$ iterations, the stopping
criteria of all tested methods is the residual norm at the current iterate $x_{k}$ satisfying both
$$
\frac{\|b-z_k-Ax_k\|_2}{\|A\|_F\|x_k\|_2}\leq10^{-5} \qquad \mathrm{and}\qquad\frac{\|A^*z_k\|_2}{\|A\|_F^2\|x_k\|_2}\leq10^{-5},$$ and its relative error norm $$\mathrm{RSE}:=  \frac{\| x_{k}-x_{\star} \| _{2}^2}{\|x_{\star}\|_2^2}.$$
Moreover, the initial guess for all tested
algorithm is chosen to be a zero vector. In Algorithm \ref{algo-3-5} and Algorithm \ref{algo-3-11}, we randomly select $\left \lceil l \cdot m \right \rceil$ and $\left \lceil \eta \cdot m \right \rceil$ integers from $m$ integers with equal probability to construct the indicator set $\Gamma_k=randperm(m,round(l \cdot m))$, and $ \Phi_k=randperm(m,round(\eta \cdot m))$. 
All test methods are as follows:\\
$\star$ REK: Randomized extended Kaczmarz method \cite{Zou}.\\
$\star$ TREK: Two-subspace randomized extended Kaczmarz method \cite{WWT}.\\
$\star$ TREKS: Two-dimensional randomized extended Kaczmarz method with simple random sampling.\\
$\star$ GREK: Greedy randomized extended Kaczmarz method.\\
$\star$ TGREK: Two-dimensional greedy randomized extended Kaczmarz method.\\
$\star$ SREK: Semi-randomized extended Kaczmarz method.\\
$\star$ TSREK: Two-dimensional semi-randomized extended Kaczmarz method.\\
$\star$ TSREKS: Two-dimensional semi-randomized extended Kaczmarz method with simple random sampling.\\

\begin{example}\label{Example-1}
\rm Consider the least-squares problem \eqref{1-1} with its coefficient matrix given by a random Gaussian matrix $A=randn(m,n)$, where the symbol $randn(\cdot)$ denotes a MATLAB built-in function, which randomly generates some synthetic data. Moreover, we randomly generate a solution $x_\star$ by the MATLAB function $randn$, with $x_\star=randn(n,1)$, such that the right-hand side vector $b$ is set to be $Ax_\star+r$, where $r\in $ null$(A^*)$ is a nonzero vector generated by the MATLAB function $null$. For TREKS and TSREKS, the parameters $l$ and $\eta$ are chosen to be $0.01$.

\begin{small}
\begin{table}[!htbp]
    \centering
    \caption{Numerical results  of Example \ref{Example-1}  \label{Table-1}}
    \setlength{\tabcolsep}{0.7mm}
    \begin{tabular}{|l|c|c|c|c|c|c|}
    \hline
        \multicolumn{2}{|c|}{$ m \times n$}  & $4000\times1000$ & $6000\times1500$ & $10000\times2500$ & $15000\times4000$ & $20000\times6000$\\ \hline
         & IT   & 35n & 33n & 31n & 32n  & 34n\\ 
         REK & CPU  & 4.0940 & 9.6833& 28.0329& 49.7197 &110.2743\\ 
        ~ & RSE  & 1.87e-7 & 2.88e-7& 4.18e-7 & 8.44e-7 & 1.30e-6\\ \hline
        
         & IT  & 17n & 17n & 16n  & 16n & 17n\\ 
       TREK& CPU & 3.1228&  9.3480& 20.8600 & 39.7708 & 98.9346\\ 
        ~ & RSE  & 1.02e-7 & 1.46e-7& 3.98e-7 & 4.96e-7 &1.46e-6\\ \hline

         & IT  & 17n & 16n & 16n  & 16n & 17n\\ 
        TREKS & CPU &  2.4251&  7.2530& 17.8421 & 33.5764 &77.4778\\
        ~ & RSE  & 1.35e-7 & 3.38e-7& 3.56e-7 & 6.78e-7 & \textbf{1.09e-6}\\ \hline

         & IT  & 8n & 7n & 7n & 7n & 8n\\ 
        GREK& CPU  & 1.4187 &  3.3920& 8.8301 & 19.7287 & 45.8266\\ 
        ~ & RSE  & 7.69e-6 & 5.82e-6& 4.37e-6 & 3.51e-6 &3.02e-6\\ \hline
        
        & IT  & 4n &4n & 4n&  4n &4n \\ 
       TGREK  & CPU  & 1.1552 & 2.3649 & 7.3129& 15.2386 & 32.2548\\ 
        ~ & RSE & 1.12e-7 & 7.82e-8& 1.31e-7 & 2.27e-7 &1.28e-6 \\ \hline

        & IT & 7n & 7n & 7n & 7n & 8n \\ 
        SREK & CPU  & 0.7706 &  2.3429& 6.4247 & 10.0677 & 23.9957\\
        ~ & RSE  & 1.13e-5 & 7.63e-6& 4.99e-6 & 4.44e-6 &2.76e-6 \\ \hline
        
         & IT  & 4n &4n & 4n&  4n &4n \\ 
        TSREK & CPU  & \textbf{0.6675} &  1.3428 & 4.5088 & 7.3736 & 15.5008\\ 
        ~ & RSE  & \textbf{9.70e-8} & \textbf{5.48e-8}& \textbf{9.81e-8}& \textbf{1.93e-7} & 1.17e-6\\ \hline

         & IT  & 5n &4n & 4n&  4n &4n \\ 
        TSREKS & CPU & 0.7421 & \textbf{1.1901} & \textbf{3.8598} & \textbf{6.4278} & \textbf{12.7261}\\ 
        ~ & RSE  & 1.14e-7 & 2.28e-7& 1.64e-7 & 2.91e-7 &1.36e-6\\ \hline

    \end{tabular}
\end{table}
\end{small}

\begin{small}
\begin{table}[!htbp]
    \centering
    \caption{Numerical results  of Example \ref{Example-1} with  $l,\eta=0.01$  \label{Table-2}}
    \setlength{\tabcolsep}{0.7mm}
    \begin{tabular}{|l|c|c|c|c|c|c|}
    \hline
        \multicolumn{2}{|c|}{$ m \times n$} & $1000\times3000$ & $1500\times5000$ & $2500\times8000$ & $3000\times10000$ & $5000\times20000$\\ \hline
         & IT & 44m & 38m & 36m  & 36m & 39m\\ 
        REK & CPU & 3.9053 & 11.8842 & 24.5849 & 35.7705 & 97.4236\\ 
        ~ & RSE & 2.95e-7 & 3.07e-7 & 7.06e-7 & 6.56e-7 & 1.35e-6\\ \hline
        
         & IT & 22m & 19m & 19m  & 18m & 20m\\ 
        TREK & CPU & 3.3454 & 10.5116 & 22.0182 & 30.6844 & 95.9596\\ 
        ~ & RSE & 2.20e-7 & 3.16e-7 & 4.25e-7 & 7.00e-7 & 9.37e-7\\ \hline

         & IT & 22m & 19m & 19m  & 18m & 20m\\ 
        TREKS & CPU & 3.1204 & 9.4046 & 20.9360 & 27.9019 & 87.4578\\ 
        ~ & RSE & 2.05e-7 & 2.44e-7 & 5.88e-7 & 6.88e-7 & 9.43e-7\\ \hline

         & IT & 11m & 9m & 9m & 9m & 10m\\ 
        GREK & CPU & 1.2461 & 3.7211 & 8.6506 & 13.3862 & 40.6111\\ 
        ~ & RSE & 9.47e-7 & 1.88e-6 & 1.10e-6 & 7.21e-7 & 7.80e-7\\ \hline

         & IT & 6m & 5m & 5m & 5m & 5m \\ 
        TGREK & CPU & 1.0513 & 3.0891 & 7.4636 & 10.4755 & 27.7316\\ 
        ~ & RSE & 6.01e-8 & \textbf{6.88e-8} & 1.36e-7 & 1.18e-7 & 5.92e-7\\ \hline

         & IT & 11m & 9m & 9m & 9m & 10m\\ 
        SREK & CPU & 0.9740 & 2.8513 & 6.1995 & 6.4247 & 21.6812\\ 
        ~ & RSE & 7.36e-7 & 1.34e-6 & 1.51e-6 & 4.99e-6 & 8.71e-7\\ \hline

         & IT & 5m & 5m & 5m & 5m & 5m\\ 
        TSREK & CPU & \textbf{0.5155} & 1.2682 & 3.0330 & 4.1234 & 9.2025\\ 
        ~ & RSE & 3.13e-7 & 8.65e-8 & \textbf{1.20e-7} & \textbf{6.33e-8} & \textbf{5.57e-7}\\ \hline

         & IT & 7m & 5m & 5m & 5m & 5m\\ 
        TSREKS & CPU & 0.6565 & \textbf{0.9940} & \textbf{2.6230} & \textbf{3.5606} & \textbf{7.7544}\\ 
        ~ & RSE & \textbf{4.20e-8} & 2.24e-7 & 1.58e-7 & 1.07e-7 & 5.79e-7\\ \hline
    \end{tabular}
\end{table}
\end{small}
Starting with the zero vector, we implement all methods to solve the least-squares problem \eqref{1-1}, and then record the obtained numerical results in Tables \ref{Table-1} and \ref{Table-2}. A rough conclusion drawn from these tables is that, under the same strategy, the iterations and computing time corresponding to two-dimensional extended Kaczmarz-type methods are commonly less than that of traditional one-dimensional methods, which perform more competitively. The reason is that, in per iteration, two working rows and columns possess more valid information compared with a single row and column. Specifically, one can see that TREKS performs better than REK and TREK, while GREK and TGREK require much fewer iterations and CPU time than TREKS in all cases. Indeed, the convergence rate of TGREK is about twice that of 
 TREKS, and it achieves higher accuracy compared with GREK, even though the speed-up of the former compared with GREK is at most $1.46$ for the case $5000\times 20000$. Moreover, we find that the SREK, TSREK, and TSREKS solvers outperform the GREK and TGREK solvers in terms of CPU time. The speed-up of TSREK compared with TGREK is at least $2$, with SREK at least $1.15$ for the case $4000\times1000$, but the former is superior to SREK in accuracy. It also appears that the TSREK and TSREKS are the best two solvers among all other methods in terms of CPU time. The reason is that, at each iteration, both of them only select the two active rows and columns corresponding to the current largest and the second largest homogeneous residuals, and the latter requires a small part of the rows of the coefficient matrix, which saves computational operations and storage. 

\end{example}

\begin{example}\label{Example-2}
\rm Consider solving least-squares
problems\eqref{1-1} with its coefficient matrix coming from the University of Florida Sparse Matrix Collection \cite{Davis}. The chosen matrices are mainly used for statistical mathematical, combinatorial, and linear programming problems. The right-hand side vectors are generated similarly in Example \ref{Example-1}. Due to the prohibitively large size of the matrix, the vector $r$ cannot be directly computed using the MATLAB function $null$. Instead, it is generated through a projection process, that is, $r=\tilde{r}-AA^{\dagger}\tilde{r}$, where the vector $\tilde{r}$ is generated randomly with the MATLAB function randn and $A^{\dagger}\tilde{r}$ is generated by the MATLAB function $lsqminnorm$. For TREKS and TSREKS, the parameters $l$ and $\eta$ are chosen to be $0.01$.

\begin{small}
\begin{table}[!htbp]
    \centering
    \caption{Numerical results  of Example \ref{Example-2}  \label{Table-3}}
    \setlength{\tabcolsep}{0.3mm}
    \begin{tabular}{|l|c|c|c|c|c|c|}
    \hline
        \multicolumn{2}{|c|}{Matrix} & Franz10 & relat7 & GL7d13 & lp\_nug20 & testbig \\ \hline
        \multicolumn{2}{|c|}{$ m \times n$} & $19588\times4164$ & $21924\times1045$ & $47271\times8899$ & $15240\times72600$ & $17613\times31223$\\ \hline
        \multicolumn{2}{|c|}{Density}  & 0.12\% & 0.36\% & 0.09\% & 0.03\% & 0.01\% \\ \hline
        \multicolumn{2}{|c|}{Cond$\left ( A \right ) $ } & 1.27e+16 & Inf & Inf & 5.37e+16 & 6.69e+2 \\ \hline

        REK & IT & 24n & 168n  & 76n  & 16m & 163m\\ 
        ~ & CPU & 24.2922 & 31.3828 & 853.6224 & 341.6211 & 2102.1692\\ 
        ~ & RSE & 4.95e-7 & 2.38e-6 & 6.44e-5 & 1.12e-6 & 8.35e-5\\ \hline

        TREK & IT & 12n & 73n  & 32n & 8m & 82m\\ 
        ~ & CPU & 23.1345 & 26.9842 & 603.9348 & 371.6056 & 2193.4794\\ 
        ~ & RSE & 2.95e-7 & 2.07e-6 & 1.09e-5 & 1.38e-6 & 6.07e-5\\ \hline

        TREKS & IT & 12n & 84n  & 26n & 8m & 81m\\ 
        ~ & CPU & 17.5021 & 18.8409 & 550.0942 & 335.9771 & 1699.3326\\ 
        ~ & RSE & 4.54e-7 & 2.29e-6 & 2.18e-5 & 1.31e-6 & 5.88e-5\\ \hline

        GREK & IT & 3n & 24n & 5n & 3m & 18m\\ 
        ~ & CPU & 6.3689 & 9.8453 & 94.3604 & 96.5894 & 349.7251\\ 
        ~ & RSE & 8.42e-6 & 3.19e-6 & 1.47e-6 & 2.39e-7 & 1.04e-5\\ \hline

        TGREK & IT & 2n & 14n & 3n & 2m & 10m\\ 
        ~ & CPU & 4.2869 & 6.8950 & 53.5814 & 65.1760 & 216.0054\\ 
        ~ & RSE & 5.44e-9 & 2.55e-6 & 1.93e-7 & 7.22e-10 & 1.53e-5\\ \hline

        SREK & IT & 3n & 28n & 5n & 3m & 18m\\ 
        ~ & CPU & 2.8134 & 5.6693 & 47.2849 & 61.5645 & 186.6214\\ 
        ~ & RSE & 3.33e-7 & 4.09e-6 & 1.84e-7 & 3.24e-6 & 1.14e-5\\ \hline

        TSREK & IT & 2n & 15n & 3n & 2m & 10m\\ 
        ~ & CPU & 1.7946 & 3.3496 & 21.7502 & 37.7981 & 83.0902\\ 
        ~ & RSE & \textbf{2.43e-9} & \textbf{2.05e-6} & \textbf{1.40e-7} & \textbf{5.94e-10} & 1.04e-5\\ \hline

        TSREKS & IT & 2n & 12n & 3n & 2m & 10m\\ 
        ~ & CPU & \textbf{1.5192} & \textbf{2.0421} & \textbf{19.8472} & \textbf{30.1789} & \textbf{71.0731}\\ 
        ~ & RSE & 3.50e-9 & 2.79e-6 & 1.47e-7 & 7.15e-10 & \textbf{5.60e-6}\\ \hline
    \end{tabular}
\end{table}
\end{small}

We run the tested methods on some practical problems, and then list the numerical results in Table \ref{Table-3}. As shown in this table, a rough conclusion is that the TREKS, GREK, SREK, TGREK, TSREK, and TSRKES solvers have comparable performance to the REK and TREK solvers. Particularly, the speed-up of TGREK compared with GREK is at least $1.43$ in the case of relat7, and at most $1.76$ in the case of GL7d13, and TSREK compared with SREK is at least $1.57$ in the case of Franz10, and at most $2.25$ in the case of testbig. Moreover, one can also see that 
TSREK and TSREKS solvers are superior to the other solvers in terms of the number of iterations and CPU time. Indeed, TSREKS is about $2$-$10$ times faster than others. This reflects that the method with the simple random sampling strategy is more favorable 
for big data problems. 
\end{example}

\begin{example}\label{Example-3}
\rm Consider solving the X-ray computed tomography problem and the seismic tomography in geophysics.
The Matlab package AIR Tools II \cite{Hansen} is used to simulate the above problems. First, the performance of the proposed algorithm is tested in Curved Fan-Beam Tomography. We generated a test image of size 64 by 64 based on the $fancurvedtomo$ function in the AIR Tools II package and recorded the measurements of the angles 0:1.5:179.5. The number of sensors per projection line is 100 and the distance $d$ from the source to the center of rotation is set to 80. The Euclidean condition number of $A$ is 573.3796. The right-hand side vector is
$b=Ax_{\star}+r$, where the unique solution of the least-squares problem \eqref{1-1} is obtained by reshaping the $64\times64$ Shepp-Logan medical phantom and $r$ is a nonzero vector in the null space generated by the MATLAB function $null$. For TREKS and TSREKS, the parameters $l$ and $\eta$ are chosen to be $0.01$.

In Table \ref{Table-4}, we list the number of iteration steps, the computing times, and the relative solution errors for all tested methods. It appears from this table that the TREKS and GREK work better than REK and TREK, while SREK, TGREK, TSREK, and TSREKS are superior to the other methods in terms of computing time. More precisely, with the simple random sampling, the speed-up of TREKS compared with TREK is about $2$, and TSREKS compared with TSREK is about $1.5$, which is more favorable for big data problems. Moreover, the TGREK and TSREK are about two times faster than GREK and SREK, respectively. This illustrates that the two-dimensional extended Kaczamrz-type method works better than the one-dimensional ones. The restored Shepp-Logan medical phantom of size $64\times64$ given by the tested methods is displayed in Fig.\ref{Fig:1}.

\begin{table}[!htbp]
    \centering
    \caption{Numerical results  of Example \ref{Example-3} for $10080\times4096$ matrix $A$   \label{Table-4}}
    \begin{tabular}{|l|c|c|c|c|}
    \hline
        \multicolumn{2}{|c|}{Algorithm} & IT & CPU &RSE \\ \hline

        \multicolumn{2}{|c|}{REK} &  3287n  & 9354.5201 & 1.20e-3\\ 
        
        \multicolumn{2}{|c|}{TREK}  &  1634n & 9137.1344 & 1.34e-3 \\

        \multicolumn{2}{|c|}{TREKS}  &  1011n & 5597.3971 & 6.93e-4 \\

        \multicolumn{2}{|c|}{GREK}  &  372n & 1275.4051 & 8.46e-4\\

        \multicolumn{2}{|c|}{TGREK} & 215n & 707.5904 &9.90e-4  \\

        \multicolumn{2}{|c|}{SREK}  &  430n & 811.4277 & 2.15e-3\\

        \multicolumn{2}{|c|}{TSREK}  & 160n & 352.9157 &4.54e-4  \\

        \multicolumn{2}{|c|}{TSREKS}  & 119n & \textbf{247.2895} &\textbf{2.26e-4}\\ \hline

    \end{tabular}
\end{table}

 \begin{figure}[htbp!]
\begin{center}
	\begin{minipage}[c]{1.0\textwidth}
		\includegraphics[width=5in]{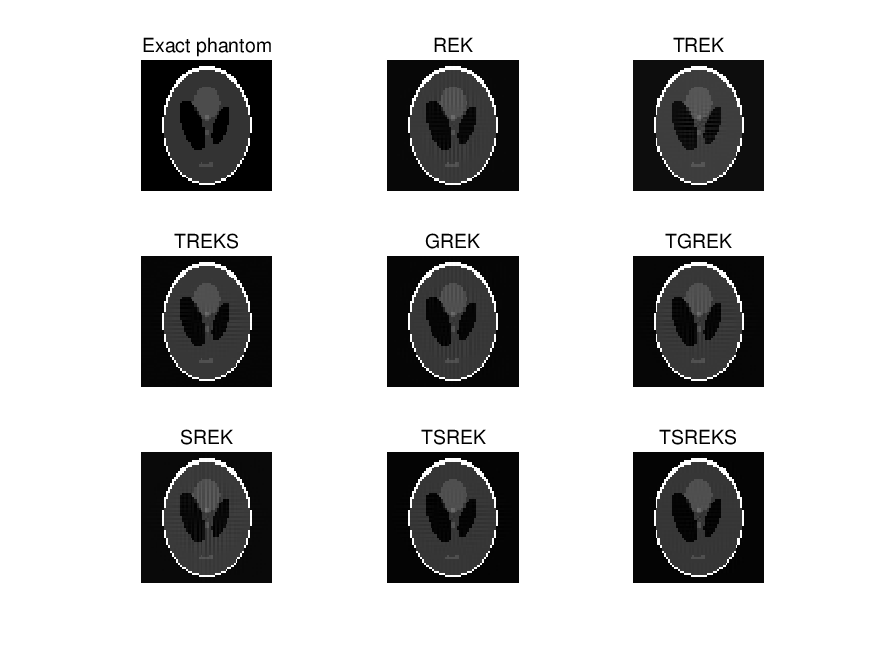}
	\end{minipage}
\end{center}
	\vspace{-3em}\caption{Pictures of the Shepp-Logan medical phantom \label{Fig:1}}
\end{figure}

To further illustrate the efficiency of the proposed methods, we apply them to solve the seismic wave tomography problem. The goal is to use the observed data $b$ to invert the properties in the square domain of size $[0,20]\times[0,20]$ subsurface medium. There are $18$ equally spaced sources and $560$ equally spaced receivers on the domain boundary. Here, a testing image of size $20\times 20$ based on the $seismictomo$ function in the AIR Tools II package is given, and the Euclidean condition number of $A$ is $448.46$. The right-hand side vector is $b=Ax_{\star}+r$, where the unique solution of the least-squares problem \eqref{1-1} is obtained by reshaping the $20\times20$ tectonic phantom and $r$ is a nonzero vector in the null space generated by the MATLAB function $null$. For TREKS and TSREKS, the parameters $l$ and $\eta$ are chosen to be $0.1$.

In Table \ref{Table-5}, we list the number of iteration steps, the computing times, and the relative solution errors for all tested methods. As was shown, TREKS shows better performance than REK and TREK in terms of computing time, while GREK, SREK, TGREK, TSREK, and TSREKS converge faster than the former three. In particular, with the simple random sampling, the speed-up of TREKS compared with TREK is about $1.3$, and TSREKS compared with TSREK is about $1.4$, which is more favorable for big data problems. Indeed, the TGREK and TSREK are about $1.2$ times faster and achieve higher accuracy than GREK and SREK, respectively. This illustrates that the two-dimensional extended Kaczamrz-type methods work more actively than the one-dimensional ones. The restored tectonic phantoms of size $20\times20$ given by the tested methods are displayed in Fig.\ref{Fig:2}.

\begin{table}[!htbp]
    \centering
    \caption{Numerical results of Example \ref{Example-3} for $10080\times400$ matrix $A$   \label{Table-5}}
    \begin{tabular}{|l|c|c|c|c|}
    \hline
        \multicolumn{2}{|c|}{Algorithm} & IT & CPU &RSE \\ \hline

        \multicolumn{2}{|c|}{REK} &  30885n  & 1701.8708 & 2.39e-4\\ 
        
        \multicolumn{2}{|c|}{TREK}  &  14001n & 1388.6912 & 2.61e-4 \\

        \multicolumn{2}{|c|}{TREKS}  &  13366n & 1051.6015 & 2.64e-4 \\

        \multicolumn{2}{|c|}{GREK}  &  3460n & 347.0520 & 3.32e-4\\

        \multicolumn{2}{|c|}{TGREK} & 1513n & 200.9663 &\textbf{2.38e-4}  \\

        \multicolumn{2}{|c|}{SREK}  &  2993n & 159.4135 & 4.56e-4\\

        \multicolumn{2}{|c|}{TSREK}  & 1999n & 130.6461 &2.66e-4  \\

        \multicolumn{2}{|c|}{TSREKS}  & 1234n & \textbf{91.7292} &3.45e-4\\ \hline

    \end{tabular}
\end{table}

 \begin{figure}[htbp!]
\begin{center}
	\begin{minipage}[c]{1.0\textwidth}
		\includegraphics[width=5in]{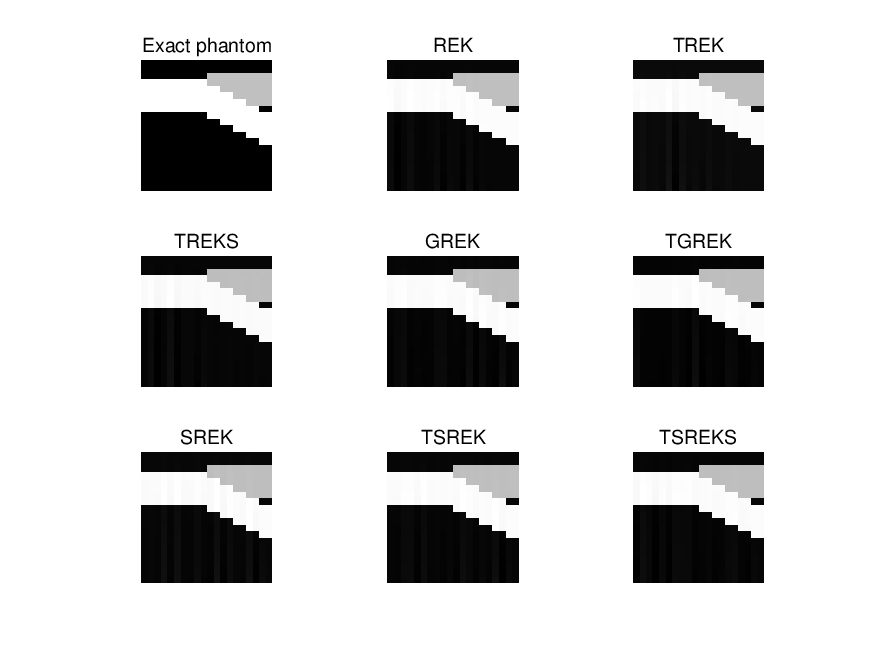}
	\end{minipage}
\end{center}
	\vspace{-3em}\caption{Pictures of the tectonic phantom \label{Fig:2}}
\end{figure}

\end{example}
\section{Conclusions}
\label{section-5}
In this paper, we proposed several fast two-dimensional extended Kaczmarz-type methods to solve the least-squares problems \eqref{1-1}. The main contributions are listed as follows.\\
\begin{itemize}
    \item  A greedy randomized extended Kaczmarz method, which incorporates the greedy randomized Kaczmarz iteration with a greedy randomized orthogonal projection iteration, is proposed. To improve its convergence rate, we derive the SREK method, by selecting the row and column corresponding to the current largest homogeneous residuals at each iteration.
    \item From a purely algebraic point of view, an alternative to TREK is concisely proposed, which performs more straightforwardly and easier than the method proposed in \cite{WWT}. We then develop a two-dimensional randomized extended Kaczmarz method with simple random sampling (TREKS), which only utilizes a small portion of rows and columns of the coefficient matrix, for solving Eq.\eqref{1-1}.
    \item A novel two-dimensional greedy randomized extended Kaczmarz method, by grasping two larger entries of the residual vector at each iteration, is proposed. The convergence analysis of which is also newly established. By selecting the rows and columns corresponding to the current largest and the second largest homogeneous residuals, a two-dimensional semi-randomized extended Kaczmarz method is derived. The method does not need to construct index sets with working rows and columns, nor compute probabilities. To further save the arithmetic operations of TSREK, we present a modified TSREK method with simple random sampling that only requires computing some elements of the residual vectors corresponding to the simple sampling set. The proposed method is comparative for big data problems.

    \item  The obtained numerical results, including when the proposed solvers for some randomly generated data and the X-ray computed tomography
    problem, illustrate the feasibility and validity of the proposed solvers compared to many state-of-the-art randomized extended Kaczmarz methods.
\end{itemize}

\bigskip
\noindent\textbf{Data Availability Statement:} {The data used to support the findings of this study are included within this article.}

\noindent\textbf{Conflict of Interest:} {The authors declare no conflict of interest.}

\appendix
	\section{
\textbf{Proof of Lemma \ref{lem-1}}}\label{App-1}

From the TRKS method, it can be stated that
$$x_{k+1}=x_k+\frac{b^{(i_{k_2})}-A^{(i_{k_2})}x_k}{\|A^{(i_{k_2})}\|_2^2}(A^{(i_{k_2})})^*+\left(\frac{b^{(i_{k_1})}}{\|A^{(i_{k_1})}\|_2}-\bar{\mu} _k\frac{b^{(i_{k_2})}}{\|A^{(i_{k_2})}\|_2}-u_k^*x_k\right)\frac{u_k}{\|u_k\|_2^2},$$
where 
\begin{equation}\label{5-1}
\mu_k=\frac{(A^{(i_{k_2})})(A^{(i_{k_1})})^*}{\|A^{(i_{k_2})}\|_2\|A^{(i_{k_1})}\|_2} 
\end{equation}
and 
\begin{equation}\label{5-2}
u_k=\frac{(A^{(i_{k_1})})^*}{\|A^{(i_{k_1})}\|_2}-\mu_k\frac{(A^{(i_{k_2})})^*}{\|A^{(i_{k_2})}\|_2}
\end{equation} with $\|u_k\|_2^2=1-|\mu_k|^2$.
Then, it follows that
\begin{equation}\label{5-3}
\begin{aligned}
x_{k+1}-x_{\star}&=x_k-x_{\star}+\frac{b^{(i_{k_2})}-A^{(i_{k_2})}x_k}{\|A^{(i_{k_2})}\|_2^2}(A^{(i_{k_2})})^*\\
&\quad+\left(\frac{b^{(i_{k_1})}}{\|A^{(i_{k_1})}\|_2}-\bar{\mu} _k\frac{b^{(i_{k_2})}}{\|A^{(i_{k_2})}\|_2}-u_k^*x_k\right)\frac{u_k}{\|u_k\|_2^2}\\
&=\left(I_n-\frac{(A^{(i_{k_2})})^*A^{(i_{k_2})}}{\|A^{(i_{k_2})}\|_2}\right)(x_k-x_{\star})\\
&\quad+\left(\frac{A^{(i_{k_1})}x_{\star}}{\|A^{(i_{k_1})}\|_2}-\bar{\mu} _k\frac{A^{(i_{k_2})}x_{\star}}{\|A^{(i_{k_2})}\|_2}-u_k^*x_k\right)\frac{u_k}{\|u_k\|_2^2}\\
&=\left(I_n-\frac{(A^{(i_{k_2})})^*A^{(i_{k_2})}}{\|A^{(i_{k_2})}\|_2}\right)(x_k-x_{\star})-u_k^*(x_k-x_{\star})\frac{u_k}{\|u_k\|_2^2}\\
&=\left(I_n-\frac{(A^{(i_{k_2})})^*A^{(i_{k_2})}}{\|A^{(i_{k_2})}\|_2}-\frac{u_ku_k^*}{\|u_k\|_2^2}\right)(x_k-x_{\star}).
\end{aligned}
\end{equation}
Denote $\tilde{y}_k$ and $\tilde{x}_{k+1}$ as the vectors obtained by utilizing one RK method on $x_k$ and $\tilde{y}_k$ with the working rows $A^{(i_{k_1})}$ and $A^{(i_{k_2})}$ for the consistent linear system $Ax=b$, respectively. That is,
$$
\tilde{y}_k=x_k+\frac{{b}^{(i_{k_1})}-A^{(i_{k_1})}x_{k}} {\| A^{(i_{k_1})}\|_{2} ^{2} }(A^{(i_{k_1})})^{*}
$$
and
$$
\tilde{x}_{k+1}=\tilde{y}_k+\frac{{b}^{(i_{k_2})}-A^{(i_{k_2})}\tilde{y}_k} {\| A^{(i_{k_2})}\|_{2} ^{2} }(A^{(i_{k_2})})^{*}.
$$
It follows from $u_k$ that
$$
\begin{aligned}
\tilde{x}_{k+1}-x_{\star}&=\tilde{y}_k-x_{\star}+\frac{{b}^{(i_{k_2})}-A^{(i_{k_2})}\tilde{y}_k} {\| A^{(i_{k_2})}\|_{2} ^{2} }(A^{(i_{k_2})})^{*}\\
&=\left(I_n-\frac{(A^{(i_{k_2})})^{*}A^{(i_{k_2})}}{\| A^{(i_{k_2})}\|_{2} ^{2}}\right)(\tilde{y}_k-x_{\star})\\
&=\left(I_n-\frac{(A^{(i_{k_2})})^{*}A^{(i_{k_2})}}{\| A^{(i_{k_2})}\|_{2} ^{2}}\right)\left(I_n-\frac{(A^{(i_{k_1})})^{*}A^{(i_{k_1})}}{\| A^{(i_{k_1})}\|_{2} ^{2}}\right)(x_k-x_{\star})\\
&=\left(I_n-\frac{(A^{(i_{k_2})})^{*}A^{(i_{k_2})}}{\| A^{(i_{k_2})}\|_{2} ^{2}}-\left(I_n-\frac{(A^{(i_{k_2})})^{*}A^{(i_{k_2})}}{\| A^{(i_{k_2})}\|_{2} ^{2}}\right)\frac{(A^{(i_{k_1})})^{*}A^{(i_{k_1})}}{\| A^{(i_{k_1})}\|_{2} ^{2}}\right)\\
&\quad(x_k-x_{\star})\\
&=\left(I_n-\frac{(A^{(i_{k_2})})^{*}A^{(i_{k_2})}}{\| A^{(i_{k_2})}\|_{2} ^{2}}-\frac{u_kA^{(i_{k_1})}}{\| A^{(i_{k_1})}\|_{2}} \right)(x_k-x_{\star})\\
&=\left(I_n-\frac{(A^{(i_{k_2})})^{*}A^{(i_{k_2})}}{\| A^{(i_{k_2})}\|_{2} ^{2}}-\frac{u_ku_k^*}{\| u_k\|_{2}^2} \right)(x_k-x_{\star})\\
&\quad+\left(\frac{u_ku_k^*}{\| u_k\|_{2}^2}-\frac{u_kA^{(i_{k_1})}}{\| A^{(i_{k_1})}\|_{2}} \right)(x_k-x_{\star}).
\end{aligned}
$$
It is easy to see that $\left(I_n-\frac{(A^{(i_{k_2})})^{*}A^{(i_{k_2})}}{\| A^{(i_{k_2})}\|_{2} ^{2}}-\frac{u_ku_k^*}{\| u_k\|_{2}^2} \right)^*\left(\frac{u_ku_k^*}{\| u_k\|_{2}^2}-\frac{u_kA^{(i_{k_1})}}{\| A^{(i_{k_1})}\|_{2}} \right)=0.$
Furthermore, through the orthogonality, we have
$$
\begin{aligned}
\|\tilde{x}_{k+1}-x_{\star}\|_2^2&=\left\|\left(I_n-\frac{(A^{(i_{k_2})})^{*}A^{(i_{k_2})}}{\| A^{(i_{k_2})}\|_{2} ^{2}}-\frac{u_ku_k^*}{\| u_k\|_{2}^2} \right)(x_k-x_{\star})\right\|_2^2\\
&\quad+\left\|\left(\frac{u_ku_k^*}{\| u_k\|_{2}^2}-\frac{u_kA^{(i_{k_1})}}{\| A^{(i_{k_1})}\|_{2}} \right)(x_k-x_{\star})\right\|_2^2
\end{aligned}
$$
From Eq.\eqref{5-3},
$$
\|x_{k+1}-x_{\star}\|_2^2=\|\tilde{x}_{k+1}-x_{\star}\|_2^2-\left|\frac{|\mu_k|^2}{\|u_k\|_2}\frac{A^{(i_{k_1})}(x_k-x_{\star})}{\| A^{(i_{k_1})}\|_{2}}-\frac{\bar{\mu}_k}{\|u_k\|_2}\frac{A^{(i_{k_2})}(x_k-x_{\star})}{\| A^{(i_{k_2})}\|_{2}}\right|^2.
$$
By taking the conditional expectation on both sides of the equation, we have
\begin{equation}\label{5-4}
\begin{aligned}
&\quad\mathbb{E}_k\|x_{k+1}-x_{\star}\|_2^2\\
&=\mathbb{E}_k\|\tilde{x}_{k+1}-x_{\star}\|_2^2-\mathbb{E}_k\left|\frac{|\mu_k|^2}{\|u_k\|_2}\frac{A^{(i_{k_1})}(x_k-x_{\star})}{\| A^{(i_{k_1})}\|_{2}}-\frac{\bar{\mu}_k}{\|u_k\|_2}\frac{A^{(i_{k_2})}(x_k-x_{\star})}{\| A^{(i_{k_2})}\|_{2}}\right|^2.
\end{aligned}
\end{equation}
For the first term of the right-hand side, define \\

$$Q_{i_{k_1}}=I_n-\frac{(A^{(i_{k_1})})^{*}A^{(i_{k_1})}}{\| A^{(i_{k_1})}\|_{2} ^{2}}\quad \mathrm{and}  \quad Q_{i_{k_2}}=I_n-\frac{(A^{(i_{k_2})})^{*}A^{(i_{k_2})}}{\| A^{(i_{k_2})}\|_{2} ^{2}},$$
the expected probability
\begin{equation}\label{add-1}
\tilde{P}_{k_1}=\sum_{\substack{i_{k_1}\in\Gamma_k }}\frac{\|A^{(i_{k_1})}\|_2^2}{\sum_{\substack{i\in\Gamma_k }}\|A^{(i)}\|_2^2}\quad\mathrm{and}\quad\tilde{P}_{k_2}=\sum_{\substack{i_{k_2}\in\Gamma_k\\i_{k_2}\neq i_{k_1}}}\frac{\|A^{(i_{k_2})}\|_2^2}{\sum_{\substack{j\in\Gamma_k\\j\neq i_{k_1}}}\|A^{(j)}\|_2^2}.
\end{equation}
From Chebyshev’s (weak) law of large numbers, if 
$lm$ is sufficiently large and there are six scalars $0<\varepsilon_k,\varepsilon_{k_1},\varepsilon_{k_2},\tilde{\varepsilon}_k,\tilde{\varepsilon}_{k_1},\tilde{\varepsilon}_{k_2}\ll1$, then it follows that
\begin{equation}\label{5-5}
\frac{\sum_{\substack{i_{k_2}\in\Gamma_k\\i_{k_2}\neq i_{k_1}}}|A^{(i_{k_2})}Q_{i_{k_1}}(x_k-x_{\star})|^2}{lm-1}=\frac{\sum\limits_{\substack{i=1\\i\neq i_{k_1}}}^{m}|A^{(i)}Q_{i_{k_1}}(x_k-x_{\star})|^2}{m-1}(1\pm\varepsilon_k ),
\end{equation}
\begin{equation}\label{5-6}
\frac{\sum_{\substack{j\in\Gamma_k\\j\neq i_{k_1}}}\|A^{(j)}\|_2^2}{lm-1}  =\frac{\sum\limits_{\substack{j=1\\j\neq i_{k_1}}}^{m}\|A^{(j)}\|_2^2}{m-1}(1\pm\tilde{\varepsilon}_k ),
\end{equation}

\begin{equation}\label{add-2}
\frac{\sum_{\substack{i_{k_1}\in\Gamma_k}}|Q_{i_{k_1}}(x_k-x_{\star})|^2}{lm}=\frac{\sum\limits_{\substack{i_{k_1}=1}}^{m}|Q_{i_{k_1}}(x_k-x_{\star})|^2}{m}(1\pm\varepsilon_{k_1} ),
\end{equation}
\begin{equation}\label{add-3}
\frac{\sum_{\substack{i\in\Gamma_k}}\|A^{(i)}\|_2^2}{lm}  =\frac{\sum\limits_{\substack{i=1}}^{m}\|A^{(i)}\|_2^2}{m}(1\pm\tilde{\varepsilon}_{k_1} ),
\end{equation}

\begin{equation}\label{5-7}
\resizebox{1\hsize}{!}{$
\begin{aligned}
&\quad\frac{\sum_{\substack{i_{k_1}\in\Gamma_k }}\|A^{(i_{k_1})}\|_2^2\sum_{\substack{i_{k_2}\in\Gamma_k\\i_{k_2}\neq i_{k_1}}}\|A^{(i_{k_2})}\|_2^2\left|\frac{|\mu_k|^2}{\|u_k\|_2}\frac{A^{(i_{k_1})}(x_k-x_{\star})}{\| A^{(i_{k_1})}\|_{2}}-\frac{\bar{\mu}_k}{\|u_k\|_2}\frac{A^{(i_{k_2})}(x_k-x_{\star})}{\| A^{(i_{k_2})}\|_{2}}\right|^2}{(lm)^2-lm}\\
&=\frac{\sum\limits_{\substack{i_{k_1}\in[m] }}\|A^{(i_{k_1})}\|_2^2\sum\limits_{\substack{i_{k_2}\in[m]\\i_{k_2}\neq i_{k_1}}}\|A^{(i_{k_2})}\|_2^2\left|\frac{|\mu_k|^2}{\|u_k\|_2}\frac{A^{(i_{k_1})}(x_k-x_{\star})}{\| A^{(i_{k_1})}\|_{2}}-\frac{\bar{\mu}_k}{\|u_k\|_2}\frac{A^{(i_{k_2})}(x_k-x_{\star})}{\| A^{(i_{k_2})}\|_{2}}\right|^2}{(m)^2-m}(1\pm{\varepsilon }_{k_2}),
\end{aligned}
$}
\end{equation}
and
\begin{equation}\label{5-8}
\resizebox{1\hsize}{!}{$
\begin{aligned}
&\quad\frac{\sum_{\substack{i_{k_1}\in\Gamma_k }}\|A^{(i_{k_1})}\|_2^2\sum_{\substack{i_{k_2}\in\Gamma_k\\i_{k_2}\neq i_{k_1}}}\|A^{(i_{k_2})}\|_2^2}{(lm)^2-lm}=\frac{\sum_{\substack{i_{k_1}\in[m] }}\|A^{(i_{k_1})}\|_2^2\sum\limits_{\substack{i_{k_2}=1\\i_{k_2}\neq i_{k_1}}}^{m}\|A^{(i_{k_2})}\|_2^2}{m^2-m}(1\pm\tilde{\varepsilon}_{k_2} )\\
&=\frac{\|A\|_F^2(\|A\|_F^2-\|A^{(i_{k_1})}\|_2^2)}{m^2-m}(1\pm\tilde{\varepsilon}_{k_2} ).  
\end{aligned}$}
\end{equation}
Moreover, if $x_k-x_{\star}\in$ Row$(A)=\mathcal{R}(A^*)$ and $Q_{i_{k_1}}(x_k-x_{\star})\in \mathcal{R}(A^*)$, 
then 
$$
\begin{aligned}
&\quad\mathbb{E}_k\|\tilde{x}_{k+1}-x_{\star}\|_2^2\\&=\sum_{\substack{i_{k_1}=1}}^{m}\frac{\|A^{(i_{k_1})}\|_2^2}{\|A\|_F^2}\sum_{\substack{i_{k_2}=1\\i_{k_2}\neq i_{k_1}}}^{m}\frac{\|A^{(i_{k_2})}\|_2^2}{\|A\|_F^2-\|A^{(i_{k_1})}\|_2^2}(x_k-x_{\star})^*Q_{i_{k_1}}Q_{i_{k_2}}Q_{i_{k_1}}(x_k-x_{\star})\\
&=\sum_{\substack{i_{k_1}\in\Gamma_k }}\frac{\|A^{(i_{k_1})}\|_2^2}{\|A\|_F^2}\sum_{\substack{i_{k_2}\in\Gamma_k\\i_{k_2}\neq i_{k_1}}}\frac{\|A^{(i_{k_2})}\|_2^2}{\|A\|_F^2-\|A^{(i_{k_1})}\|_2^2}\\
\end{aligned}
$$
$$
\begin{aligned}
&\quad\left(\|Q_{i_{k_1}}(x_k-x_{\star})\|_2^2-\frac{|A^{(i_{k_2})}Q_{i_{k_1}}(x_k-x_{\star})|^2}{\|A^{(i_{k_2})}\|_2^2}\right) \\
&=\tilde{P}_{k_1}\left(\|Q_{i_{k_1}}(x_k-x_{\star})\|_2^2-\frac{\sum_{\substack{i_{k_2}\in\Gamma_k\\i_{k_2}\neq i_{k_1}}}|A^{(i_{k_2})}Q_{i_{k_1}}(x_k-x_{\star})|^2}{\sum_{\substack{j\in\Gamma_k\\j\neq i_{k_1}}}\|A^{(j)}\|_2^2}\right)\\
&=\tilde{P}_{k_1}\left(\|Q_{i_{k_1}}(x_k-x_{\star})\|_2^2-\left(\frac{1\pm\varepsilon_k}{1\pm\tilde{\varepsilon}_k }\right)\frac{\sum\limits_{\substack{i=1\\i\neq i_{k_1}}}^{m}|A^{(i)}Q_{i_{k_1}}(x_k-x_{\star})|^2}{\sum\limits_{\substack{j=1\\j\neq i_{k_1}}}^{m}\|A^{(j)}\|_2^2}\right)\\
&\leq\tilde{P}_{k_1}\left(\|Q_{i_{k_1}}(x_k-x_{\star})\|_2^2-\left(\frac{1-\varepsilon_k}{1+\tilde{\varepsilon}_k}\right)\frac{\|AQ_{i_{k_1}}(x_k-x_{\star})\|_F^2}{\tau_{max}}\right)\\
&\leq\tilde{P}_{k_1}\left(\|Q_{i_{k_1}}(x_k-x_{\star})\|_2^2-\left(\frac{1-\varepsilon_k}{1+\tilde{\varepsilon}_k}\right)\frac{\lambda_{min}(A^*A)}{\tau_{max}}\|Q_{i_{k_1}}(x_k-x_{\star})\|_2^2\right)\\
&=\left(1-\frac{1-\varepsilon_k}{1+\tilde{\varepsilon}_k}\frac{\lambda_{min}(A^*A)}{\tau_{max}}\right)\tilde{P}_{k_1}\left(\|x_k-x_{\star}\|_2^2-\frac{|A^{(i_{k_1})}(x_k-x_{\star})|^2}{\|A^{(i_{k_1})}\|_2^2}\right)\\
&\leq\left(1-\frac{1-\varepsilon_k}{1+\tilde{\varepsilon}_k}\frac{\lambda_{min}(A^*A)}{\tau_{max}}\right)\left(1-\frac{1-\varepsilon_{k_1}}{1+\tilde{\varepsilon}_{k_1}}\frac{\lambda_{min}(A^*A)}{\|A\|_F^2}\right)\|x_k-x_{\star}\|_2^2.
\end{aligned}
$$
Therefore, 
\begin{equation}\label{5-9}
\mathbb{E}_k\|\tilde{x}_{k+1}-x_{\star}\|_2^2\leq\left(1-\frac{1-\varepsilon_k}{1+\tilde{\varepsilon}_k}\frac{\lambda_{min}(A^*A)}{\tau_{max}}\right)\left(1-\frac{1-\varepsilon_{k_1}}{1+\tilde{\varepsilon}_{k_1}}\frac{\lambda_{min}(A^*A)}{\|A\|_F^2}\right)\|x_k-x_{\star}\|_2^2.
\end{equation}

Next, we analyze the second term of the right side of Eq.\eqref{5-4}.
Define
$\theta=\frac{|\mu_k|^2}{\|u_k\|_2}$, $\varsigma=\frac{\bar{\mu}_k}{\|u_k\|_2}$, $\phi _{i_{k_1}}=\frac{A^{(i_{k_1})}(x_k-x_{\star})}{\| A^{(i_{k_1})}\|_{2}}$ and $\phi _{i_{k_2}}=\frac{A^{(i_{k_2})}(x_k-x_{\star})}{\| A^{(i_{k_2})}\|_{2}}$, for $p,q\in[m]$ and $p\neq q$, it follows that

\begin{equation}\label{6-10}
\begin{aligned}
&\sum\limits_{{i_{k_1}} \in [m]}\sum\limits_{{i_{k_2}} \in [m],i_{k_2}\neq i_{k_1}}\|A^{(i_{k_1})}\|_2^2\|A^{(i_{k_2})}\|_2^2\\
&\quad\left|\frac{|\mu_k|^2}{\|u_k\|_2}\frac{A^{(i_{k_1})}(x_k-x_{\star})}{\| A^{(i_{k_1})}\|_{2}}-\frac{\bar{\mu}_k}{\|u_k\|_2}\frac{A^{(i_{k_2})}(x_k-x_{\star})}{\| A^{(i_{k_2})}\|_{2}}\right|^2\\
&=\sum\limits_{p<q}\|A^{(p)}\|_2^2\|A^{(q)}\|_2^2\left(|\theta \phi_{p}-\varsigma \phi_{q}|^2+|\varsigma \phi_{q}-\theta \phi_{p}|^2 \right)\\
&\geq\sum\limits_{p<q}\|A^{(p)}\|_2^2\|A^{(q)}\|_2^2(|\theta|-|\varsigma|)^2(\phi_{p}^2+\phi_{q}^2)\\
&\geq D\sum\limits_{p<q}\left(\|A^{(p)}\|_2^2\left|A^{(q)}(x_k-x_{\star})\right|^2+\|A^{(q)}\|_2^2\left|A^{(p)}(x_k-x_{\star})\right|^2\right)\\
&=D\sum_{p\in[m]}\sum\limits_{q\in [m],q\neq p}\|A^{(q)}\|_2^2\left|A^{(p)}(x_k-x_{\star})\right|^2\\
&\geq D\tau_{min}\|A(x_k-x_{\star})\|_2^2.
\end{aligned}
\end{equation}
The first inequality follows by the fact that 
$$|\theta a-\varsigma b|^2+|\theta b-\bar{\varsigma} a|^2\geq(|\theta|-|\varsigma|)^2(|a|^2+|b|^2)$$ 
holds for any $\theta$, $\varsigma$, $a$, and $b\in\mathbb{C}$.
The second inequality follows from the fact that
$$
(|\theta|-|\varsigma|)^2=\left(\frac{|\mu_k|^2-|\bar{\mu}_k|}{\|u_k\|_2}\right)^2=\frac{|\mu_k|^2(1-|\mu_k|)}{1+|\mu_k|}\geq D.
$$
Therefore, it follows that
\begin{equation}\label{5-10}
\begin{aligned}
&\mathbb{E}_k\left|\frac{|\mu_k|^2}{\|u_k\|_2}\frac{A^{(i_{k_1})}(x_k-x_{\star})}{\| A^{(i_{k_1})}\|_{2}}-\frac{\bar{\mu}_k}{\|u_k\|_2}\frac{A^{(i_{k_2})}(x_k-x_{\star})}{\| A^{(i_{k_2})}\|_{2}}\right|^2\\
&=\tilde{P}_{k_1}\tilde{P}_{k_2}\left|\frac{|\mu_k|^2}{\|u_k\|_2}\frac{A^{(i_{k_1})}(x_k-x_{\star})}{\| A^{(i_{k_1})}\|_{2}}-\frac{\bar{\mu}_k}{\|u_k\|_2}\frac{A^{(i_{k_2})}(x_k-x_{\star})}{\| A^{(i_{k_2})}\|_{2}}\right|^2\\
&=\frac{1\pm\varepsilon_{k_1}}{1\pm\tilde{\varepsilon}_{k_1}}\frac{\sum\limits_{\substack{i_{k_1}\in[m] }}\|A^{(i_{k_1})}\|_2^2\sum\limits_{\substack{i_{k_2}\in[m]\\i_{k_2}\neq i_{k_1}}}\|A^{(i_{k_2})}\|_2^2}{\|A\|_F^2(\|A\|_F^2-\|A^{(i_{k_1})}\|_2^2)}\\
&\quad\left|\frac{|\mu_k|^2}{\|u_k\|_2}\frac{A^{(i_{k_1})}(x_k-x_{\star})}{\| A^{(i_{k_1})}\|_{2}}-\frac{\bar{\mu}_k}{\|u_k\|_2}\frac{A^{(i_{k_2})}(x_k-x_{\star})}{\| A^{(i_{k_2})}\|_{2}}\right|^2\\
&\geq \frac{1-\varepsilon_{k_2}}{1+\tilde{\varepsilon}_{k_2}}\frac{D\tau_{min}\|A(x_k-x_{\star})\|_2^2}{\|A\|_F^2\tau_{max}}\\
&\geq\frac{1-\varepsilon_{k_2}}{1+\tilde{\varepsilon}_{k_2}}\frac{D\lambda_{min}(A^*A)}{\|A\|_F^2}\frac{\tau_{min}}{\tau_{max}}\|x_k-x_{\star}\|_2^2.
\end{aligned}
\end{equation}

Substituting \eqref{5-9} and \eqref{5-10} into \eqref{5-4}, we have
$$
\begin{aligned}
&\quad\mathbb{E}_k\|x_{k+1}-x_{\star}\|_2^2\\
&\leq\left[\left(1-\frac{1-\varepsilon_k}{1+\tilde{\varepsilon}_k}\frac{\lambda_{min}(A^*A)}{\tau_{max}}\right)\left(1-\frac{1-\varepsilon_{k_1}}{1+\tilde{\varepsilon}_{k_1}}\frac{\lambda_{min}(A^*A)}{\|A\|_F^2}\right)\right.\\
&\quad\left.-\frac{1-\varepsilon_{k_2}}{1+\tilde{\varepsilon}_{k_2}}\frac{D\lambda_{min}(A^*A)}{\|A\|_F^2}\frac{\tau_{min}}{\tau_{max}}\right]\|x_k-x_{\star}\|_2^2.
\end{aligned}
$$

\section{\textbf{Proof of Lemma \ref{lem-2}}}\label{App-2}
Define
$$\begin{aligned}\Theta_{k} &=A^{(i_{k_1})}(A^{(i_{k_2})})^* (A^{(i_{k_1})})^* A^{(i_{k_2})}-\|A^{(i_{k_2})}\|_2^2(A^{(i_{k_1})})^* A^{(i_{k_1})}\\
&\quad+A^{(i_{k_2})}(A^{(i_{k_1})})^* (A^{(i_{k_2})})^* A^{(i_{k_1})}-\|A^{(i_{k_1})}\|_2^2
(A^{(i_{k_2})})^* A^{(i_{k_2})},\\
\varphi_k&=\|A^{(i_{k_2})}\|_2^2|A^{(i_{k_1})}\overline{\mathbf{x}}_k|^2+\|A^{(i_{k_1})}\|_2^2|A^{(i_{k_2})}\overline{\mathbf{x}}_k|^2\\
&\quad-(A^{(i_{k_1})}\overline{\mathbf{x}}_k)^* A^{(i_{k_1})}(A^{(i_{k_2})})^* A^{(i_{k_2})}\overline{\mathbf{x}}_k-(A^{(i_{k_2})}\overline{\mathbf{x}}_k)^* A^{(i_{k_2})}(A^{(i_{k_1})})^* A^{(i_{k_1})}\overline{\mathbf{x}}_k,\end{aligned}$$
in which $\overline{\mathbf{x}}_k=x_k-x_{\star}$.
From Algorithm \ref{algo-3-8}, we have
\begin{equation}\label{5-11}
\begin{aligned}
\overline{\mathbf{x}}_{k+1}&=\overline{\mathbf{x}}_k+\frac{\Theta_k\overline{\mathbf{x}}_k}{\|A^{(i_{k_1})}\|_2^2\|A^{(i_{k_2})}\|_2^2-|A^{(i_{k_1})}(A^{(i_{k_2})})^*|^2}\\
&=\overline{\mathbf{x}}_k-P\overline{\mathbf{x}}_k\\
&=(I-P)\overline{\mathbf{x}}_k,   
\end{aligned}
\end{equation}
where $P=-\frac{\Theta_k}{\|A^{(i_{k_1})}\|_2^2\|A^{(i_{k_2})}\|_2^2-|A^{(i_{k_1})}(A^{(i_{k_2})})^*|^2}$. It is easy to verify that $P^2=P$ and $P^*=P$. This fact shows that $P$ is a projection matrix, and if $P$ is a projector, then so is $(I-P)$. From Eq.\eqref{5-11}, it follows that
\begin{equation}\label{5-12}
\begin{aligned}
\|\overline{\mathbf{x}}_{k+1}\|_2^2
&=\|(I-P)\overline{\mathbf{x}}_{k}\|_2^2\\
&=\langle(I-P)\overline{\mathbf{x}}_{k},(I-P)\overline{\mathbf{x}}_{k}\rangle\\
&=\overline{\mathbf{x}}_k^*(I-P)\overline{\mathbf{x}}_k\\
&=\overline{\mathbf{x}}_k^*\left(\overline{\mathbf{x}}_k-P\overline{\mathbf{x}}_k\right)\\
&=\|\overline{\mathbf{x}}_k\|_2^2-\langle P\overline{\mathbf{x}}_k,P\overline{\mathbf{x}}_k\rangle\\
&=\|\overline{\mathbf{x}}_k\|_2^2-\|
P\overline{\mathbf{x}}_k\|_2^2\\
&=\|\overline{\mathbf{x}}_k\|_2^2-\|x_{k+1}-x_{k}\|_2^2.
\end{aligned}  
\end{equation}
Define
$$\mathcal{P}_{k_1}=\sum\limits_{{i_{k_1}} \in {\mathcal{U}_k}}\frac{|{b}^{(i_{k_1})}-A^{(i_{k_1})}x_{k}|^2}{\sum\limits_{i\in \mathcal{U}_k}|{b}^{(i)}-A^{(i)}x_{k}|^2} \quad,\quad\mathcal{P}_{k_2}=\sum\limits_{{i_{k_2}} \in {\mathcal{U}_k},i_{k_2}\neq i_{k_1}}\frac{|{b}^{(i_{k_2})}-A^{(i_{k_2})}x_{k}|^2}{\sum\limits_{j\in \mathcal{U}_k,j\neq i_{k_1}}|{b}^{(j)}-A^{(j)}x_{k}|^2}.$$
Then, we have
\begin{equation}\label{5-13}
\begin{aligned}
\|\overline{\mathbf{x}}_{k+1}\|_2^2&=\|\overline{\mathbf{x}}_k\|_2^2-\|x_{k+1}-x_{k}\|_2^2\\
&=\|\overline{\mathbf{x}}_k\|_2^2-\frac{\varphi_k}{\|A^{(i_{k_1})}\|_2^2\|A^{(i_{k_2})}\|_2^2-|A^{(i_{k_1})}(A^{(i_{k_2})})^*|^2}\\
&\leq \|\overline{\mathbf{x}}_k\|_2^2-\left(|v_{k_1}|^2+\left|\frac{1}{\|u_k\|_2}v_{k_2}-\frac{|\mu_k|}{\|u_k\|_2}v_{k_1}\right|^2\right)\\
\end{aligned}
\end{equation}
where $v_{k_1}=\frac{|{b}^{(i_{k_1})}-A^{(i_{k_1})}x_{k}|}{\|A^{(i_{k_1})}\|_2}$ and $v_{k_2}=\frac{|{b}^{(i_{k_2})}-A^{(i_{k_2})}x_{k}|}{\|A^{(i_{k_2})}\|_2}$.
\begin{equation}\label{5-14}
\begin{aligned}
&\mathcal{P}_{k_1}\mathcal{P}_{k_2}\left|\frac{1}{\|u_k\|_2}v_{k_2}-\frac{|\mu_k|}{\|u_k\|_2}v_{k_1}\right|^2\\
&\geq\mathcal{P}_{k_2}\sum\limits_{{i_{k_1}} \in {\mathcal{U}_k}}\frac{\epsilon_{k}\|b-Ax\|_2^2\|A^{(i_{k_1})}\|_2^2\sum\limits_{i_{k_2}\in \mathcal{U}_k,i_{k_2}\neq i_{k_1}}\|A^{(i_{k_2})}\|_2^2}{\sum\limits_{i\in \mathcal{U}_k}|{b}^{(i)}-A^{(i)}x_{k}|^2\sum\limits_{i_{k_2}\in \mathcal{U}_k,i_{k_2}\neq i_{k_1}}\|A^{(i_{k_2})}\|_2^2}\left|\frac{1}{\|u_k\|_2}v_{k_2}-\frac{|\mu_k|}{\|u_k\|_2}v_{k_1}\right|^2\\
&=\frac{\epsilon_{k}\|b-Ax\|_2^2\mathcal{P}_{k_2}}{\sum\limits_{i\in \mathcal{U}_k}|{b}^{(i)}-A^{(i)}x_{k}|^2\sum\limits_{i_{k_2}\in \mathcal{U}_k,i_{k_2}\neq i_{k_1}}\|A^{(i_{k_2})}\|_2^2}\\
&\quad\sum\limits_{{i_{k_1}} \in {\mathcal{U}_k}}\sum\limits_{{i_{k_2}} \in {\mathcal{U}_k,i_{k_2}\neq i_{k_1}}}\|A^{(i_{k_1})}\|_2^2\|A^{(i_{k_2})}\|_2^2\left|\frac{1}{\|u_k\|_2}v_{k_2}-\frac{|\mu_k|}{\|u_k\|_2}v_{k_1}\right|^2\\
&\geq\frac{\epsilon_{k}\|b-Ax\|_2^2\mathcal{P}_{k_2}}{\sum\limits_{i\in \mathcal{U}_k}|{b}^{(i)}-A^{(i)}x_{k}|^2\sum\limits_{\substack{i_{k_2}\in \mathcal{U}_k\\i_{k_2}\neq i_{k_1}}}\|A^{(i_{k_2})}\|_2^2}\left(\frac{1-\Delta}{1+\Delta}\sum\limits_{\substack{q\in \mathcal{U}_k\\q\neq p}}\|A^{(q)}\|_2^2\sum\limits_{p\in \mathcal{U}_k}\left|A^{(p)}(x_k-x_*)\right|^2\right)\\
&=\frac{1-\Delta}{1+\Delta}\epsilon_{k}\|b-Ax\|_2^2\\
&\geq\frac{1-\Delta}{2(1+\Delta)}\left(\frac{\|A\|_F^2}{\tau_{max}}+1\right)\frac{\lambda_{min}(A^*A)}{\|A\|_F^2},
\end{aligned}
\end{equation}
the second inequality is because that for $p,q\in[m]$ and $p\neq q$, we can obtain
$$
\begin{aligned}
&\sum\limits_{{i_{k_1}} \in {\mathcal{U}_k}}\sum\limits_{{i_{k_2}} \in {\mathcal{U}_k},i_{k_2}\neq i_{k_1}}\|A^{(i_{k_1})}\|_2^2\|A^{(i_{k_2})}\|_2^2\left|\frac{1}{\|u_k\|_2}v_{k_2}-\frac{|\mu_k|}{\|u_k\|_2}v_{k_1}\right|^2\\
&=\sum\limits_{p<q}\|A^{(p)}\|_2^2\|A^{(q)}\|_2^2\left(|\theta v_{k_2}-\zeta v_{k_1}|^2+|\theta v_{k_1}-\zeta v_{k_2}|^2 \right)\\
&\geq\sum\limits_{p<q}\|A^{(p)}\|_2^2\|A^{(q)}\|_2^2(\theta-\zeta)^2(v_{k_1}^2+v_{k_2}^2)\\
&\geq\frac{1-\Delta}{1+\Delta}\sum\limits_{p<q}\left(\|A^{(p)}\|_2^2\left|A^{(q)}(x_k-x_*)\right|^2+\|A^{(q)}\|_2^2\left|A^{(p)}(x_k-x_*)\right|^2\right)\\
&=\frac{1-\Delta}{1+\Delta}\sum\limits_{p\in \mathcal{U}_k}\sum\limits_{q\in \mathcal{U}_k,q\neq p}\left(\|A^{(q)}\|_2^2\left|A^{(p)}(x_k-x_*)\right|^2\right)\\
&=\frac{1-\Delta}{1+\Delta}\sum\limits_{q\in \mathcal{U}_k,q\neq p}\|A^{(q)}\|_2^2\sum\limits_{p\in \mathcal{U}_k}\left|A^{(p)}(x_k-x_*)\right|^2,
\end{aligned}
$$
where $\theta=\frac{|\mu_k|}{\|u_k\|_2}$ and $\zeta=\frac{1}{\|u_k\|_2}$.
Finally, from Eqs. \eqref{5-13} and \eqref{5-14}, it follows that
\begin{equation}\label{5-15}
\begin{aligned}
&\mathbb{E}_{k}\|\overline{\mathbf{x}}_{k+1}\|  _{2}^2= \|\overline{\mathbf{x}}_k\| _{2}^2 - {\mathbb{E}_{k}}\parallel x_{k+1} - x_{k}{\parallel _{2}^2} \\
&\leq\|\overline{\mathbf{x}}_k\| _{2}^2-\mathcal{P}_{k_1}\mathcal{P}_{k_2}\left(|v_{k_1}|^2+\left|\frac{1}{\|u_k\|_2}v_{k_2}-\frac{|\mu_k|}{\|u_k\|_2}v_{k_1}\right|^2\right)\\
&\leq\left[1-\frac{1}{2}\left(\frac{\|A\|_F^2}{\tau_{max}}+1\right)\frac{\lambda_{min}(A^*A)}{\|A\|_F^2}-\frac{1-\Delta}{2(1+\Delta)}\left(\frac{\|A\|_F^2}{\tau_{max}}+1\right)\frac{\lambda_{min}(A^*A)}{\|A\|_F^2}\right]\|\overline{\mathbf{x}}_k\| _{2}^2\\
&=\left[1-\frac{1}{1+\Delta}\left(\frac{\|A\|_F^2}{\tau_{max}}+1\right)\frac{\lambda_{min}(A^*A)}{\|A\|_F^2}\right]\|\overline{\mathbf{x}}_k\| _{2}^2.
\end{aligned}
\end{equation}
From Algorithm \ref{algo-3-8}, it follows that
$$
\begin{aligned}
r_{k+1}&=b-Ax_{k+1}=b-A\left(x_k+\gamma_k(A^{(i_{k_1})})^{*} + \lambda_k(A^{(i_{k_2})}) ^{*}\right)\\
&=b-Ax_k-\gamma_kA(A^{(i_{k_1})})^{*}-\lambda_kA(A^{(i_{k_2})}) ^{*}\\
&=r_k-\gamma_kB_{(i_{k_1})}-\lambda_kB_{(i_{k_2})},
\end{aligned}
$$
with $B=AA^*$ and $B_{(l)}$ ($l=i_{k_1},i_{k_2}$) being the $l$-th column of $B$.

\section{\textbf{Proof of Lemma \ref{lem-3}}}\label{App-3}

From Algorithm \ref{algo-3-10}, Eqs. \eqref{5-11} and \eqref{5-12}, it follows that
\begin{equation}\label{5-16}
\|\overline{\mathbf{x}}_{k+1}\|_2^2=\|\overline{\mathbf{x}}_k\|_2^2-\|x_{k+1}-x_{k}\|_2^2.
\end{equation}
Then, we have
\begin{equation}\label{5-17}
\begin{aligned}
\|\overline{\mathbf{x}}_{k+1}\|_2^2&=\|\overline{\mathbf{x}}_k\|_2^2-\|x_{k+1}-x_{k}\|_2^2\\
&=\|\overline{\mathbf{x}}_k\|_2^2-\frac{\varphi_k}{\|A^{(i_{k_1})}\|_2^2\|A^{(i_{k_2})}\|_2^2-|A^{(i_{k_1})}(A^{(i_{k_2})})^*|^2}\\
&\leq \|\overline{\mathbf{x}}_k\|_2^2-\left(|v_{k_1}|^2+\left|\frac{1}{\|u_k\|_2}v_{k_2}-\frac{|\mu_k|}{\|u_k\|_2}v_{k_1}\right|^2\right)\\
&\leq\left(1-\frac{\lambda_{min}(A^*A)}{\tau_{max}}-\frac{\lambda_{min}(A^*A)}{\tau_{max}}\frac{\omega}{c^2(1-\delta^2)}\right)\|\overline{\mathbf{x}}_k\|_2^2,
\end{aligned}
\end{equation}
where $v_{k_1}=\frac{|{b}^{(i_{k_1})}-A^{(i_{k_1})}x_{k}|}{\|A^{(i_{k_1})}\|_2}$, $v_{k_2}=\frac{|{b}^{(i_{k_2})}-A^{(i_{k_2})}x_{k}|}{\|A^{(i_{k_2})}\|_2}$, and $v_{k_1}=cv_{k_2}$ with $c>1$.\\
Here, the last inequality is achieved by
\begin{equation}\label{5-18}
\begin{aligned}
\left|\frac{1}{\|u_k\|_2}v_{k_2}-\frac{|\mu_k|}{\|u_k\|_2}v_{k_1}\right|^2&=\left(\frac{1}{c\|u_k\|_2}-\frac{|\mu_k|}{\|u_k\|_2}\right)^2v_{k_1}^2\\
&=\frac{|1-c|\mu_k||^2}{c^2\|u_k\|_2^2}v_{k_1}^2\\
&\geq\frac{\omega}{c^2(1-\delta^2)}v_{k_1}^2,
\end{aligned}
\end{equation}
where $\omega=\min{|1-c|\mu_k||^2},$
and
\begin{equation}\label{5-19}
|v_{k_1}|^2\geq\frac{\lambda_{min}(A^*A)}{\tau_{max}}\|\overline{\mathbf{x}}_k\|_2^2.
\end{equation}
Finally, it follows from Eq.\eqref{5-17}
\begin{equation}\label{5-20}
\mathbb{E}_k\|x_{k+1}-x_{\star}\|_2^2\leq \left(1-\frac{\lambda_{min}(A^*A)}{\tau_{max}}-\frac{\lambda_{min}(A^*A)}{\tau_{max}}\frac{\omega}{c^2(1-\delta^2)}\right)\|x_k-x_{\star}\|_2^2.    
\end{equation}

\section{
\textbf{Proof of Lemma \ref{lem-4}}}\label{App-4}

From Chebyshev’s (weak) law of large numbers, if 
$\eta m$ is sufficiently large and there are two scalars $0<\varepsilon_{k_3},\tilde{\varepsilon}_{k_3}\ll1$, then it follows that

\begin{equation}\label{5-21}
\frac{\sum\limits_{\substack{i_{k_1}\in\Phi_k}}\|A^{(i_{k_1})}\|_2^2}{\eta m}  =\frac{\sum\limits_{\substack{i_{k_1}\in[m]\\i_{k_1}\neq i_{{k-1}_1}}}\|A^{(i_{k_1})}\|_2^2}{m-1}(1\pm\varepsilon_{k_3} ),
\end{equation}
\begin{equation}\label{5-22}
\begin{aligned}
&\quad\frac{\sum\limits_{i_{k_1}\in\Phi _k}|{b}^{(i_{k_1})}-A^{(i_{k_1})}x_{k}|^2}{\eta m}=\frac{\sum_{i_{k_1}\in[m] }|{b}^{(i_{k_1})}-A^{(i_{k_1})}x_{k}|^2}{m-1}(1\pm\tilde{\varepsilon}_{k_3} ) .
\end{aligned}
\end{equation}
and
\begin{equation}\label{5-23}
\begin{aligned}
\left|\frac{1}{\|u_k\|_2}v_{k_2}-\frac{|\mu_k|}{\|u_k\|_2}v_{k_1}\right|^2&=\left(\frac{1}{c_1\|u_k\|_2}-\frac{|\mu_k|}{\|u_k\|_2}\right)^2v_{k_1}^2\\
&=\frac{|1-c_1|\mu_k||^2}{c_1\|u_k\|_2^2}v_{k_1}^2\\
&\geq\frac{\omega_1}{c_1^2(1-\delta^2)}v_{k_1}^2,
\end{aligned}
\end{equation}
where $v_{k_1}=\frac{|{b}^{(i_{k_1})}-A^{(i_{k_1})}x_{k}|}{\|A^{(i_{k_1})}\|_2}$, $v_{k_2}=\frac{|{b}^{(i_{k_2})}-A^{(i_{k_2})}x_{k}|}{\|A^{(i_{k_2})}\|_2}$,  and $v_{k_1}=c_1v_{k_2}$ with $c_1>1$, $\omega_1=\min{|1-c_1|\mu_k||^2}.$

\begin{equation}\label{5-24}
\begin{aligned}
|v_{k_1}|^2&\geq\frac{\sum\limits_{i_{k_1}\in\Phi _k}|{b}^{(i_{k_1})}-A^{(i_{k_1})}x_{k}|^2}{\sum\limits_{\substack{i_{k_1}\in\Phi_k}}\|A^{(i_{k_1})}\|_2^2}\\
&=\frac{\frac{1}{\eta m}\sum\limits_{i_{k_1}\in\Phi _k}|{b}^{(i_{k_1})}-A^{(i_{k_1})}x_{k}|^2}{\frac{1}{\eta m}\sum\limits_{i_{k_1}\in\Phi _k}\|A^{(i_{k_1})}\|_2^2}\\
&=\left(\frac{1\pm\varepsilon_{k_3}}{1\pm\tilde{\varepsilon}_{k_3}}\right)\frac{\sum\limits_{i_{k_1}\in[m] }|{b}^{(i_{k_1})}-A^{(i_{k_1})}x_{k}|^2}{\sum\limits_{\substack{i_{k_1}\in[m]\\i_{k_1}\neq i_{{k-1}_1}}}\|A^{(i_{k_1})}\|_2^2}\\
&\geq\frac{1-\varepsilon_{k_3}}{1+\tilde{\varepsilon}_{k_3}}\frac{\|b-Ax\|_2^2}{\tau_{max}}\\
&\geq\frac{1-\varepsilon_{k_3}}{1+\tilde{\varepsilon}_{k_3}}\frac{\lambda_{min}(A^*A)}{\tau_{max}}\|\overline{\mathbf{x}}_k\|_2^2.
\end{aligned}
\end{equation}
Form Eqs.\eqref{5-12},\eqref{5-23} and \eqref{5-24}, we have
$$
\begin{aligned}
\mathbb{E}_{k}\|\overline{\mathbf{x}}_{k+1}\|_2^2&=\|\overline{\mathbf{x}}_k\|_2^2-\mathbb{E}_{k}\|x_{k+1}-x_{k}\|_2^2\\
&=\|\overline{\mathbf{x}}_k\|_2^2-\frac{\varphi_k}{\|A^{(i_{k_1})}\|_2^2\|A^{(i_{k_2})}\|_2^2-|A^{(i_{k_1})}(A^{(i_{k_2})})^*|^2}\\
&\leq \|\overline{\mathbf{x}}_k\|_2^2-\left(|v_{k_1}|^2+\left|\frac{1}{\|u_k\|_2}v_{k_2}-\frac{|\mu_k|}{\|u_k\|_2}v_{k_1}\right|^2\right)\\
&\leq\left(1-\frac{1-\varepsilon_{k_3}}{1+\tilde{\varepsilon}_{k_3}}\frac{\lambda_{min}(A^*A)}{\tau_{max}}-\frac{1-\varepsilon_{k_3}}{1+\tilde{\varepsilon}_{k_3}}\frac{\lambda_{min}(A^*A)}{\tau_{max}}\frac{\omega_1}{c_1^2(1-\delta^2)}\right)\|\overline{\mathbf{x}}_k\|_2^2.
\end{aligned}
$$

\end{document}